\newcommand{\ulpullback}[1][ul]{\save*!/#1-1.2pc/#1:(-1,1)@^{|-}\restore}
\newcommand{\dlpullback}[1][dl]{\save*!/#1-1.2pc/#1:(-1,1)@^{|-}\restore}
\newcommand{\drpullback}[1][dr]{\save*!/#1-1.2pc/#1:(-1,1)@^{|-}\restore}
\renewcommand{\texttt}[1]{{\fontfamily{pcr}\fontseries{m}\fontshape{n}%
\selectfont #1}}
\newcommand{\kat}[1]{\text{\textbf{\textsl{#1}}}}
\renewcommand{\ldots}{\relax\ifmmode\ldotp\ldotp\ldotp\else$\m@th\ldotp\ldotp\ldotp\ $\fi}
\providecommand{\qed}{\hspace*{\fill}\nolinebreak[2]\hspace*{\fill}$\Box$}
\renewcommand{\epsilon}{\varepsilon}
\newcommand{\wtil}{\widetilde}
\newcommand{\into}			{\hookrightarrow}
\newcommand{\isopil}{\stackrel{\raisebox{0.1ex}[0ex][0ex]{\(\sim\)}}%
			{\raisebox{-0.15ex}[0.28ex]{\(\rightarrow\)}}}
\newcommand{\Hom}	{\operatorname{Hom}}
\newcommand{\Aut}	{\operatorname{Aut}}
\newcommand{\Fun}	{\operatorname{Fun}}
\newcommand{\Sub}	{\operatorname{Sub}}
\newcommand{\shortsetminus}	{\,\raisebox{1pt}{\ensuremath{\mathbb r}\,}}
\newcommand{\ov}{\overline}
\newcommand{\un}{\underline}
\newcommand{\CC}{\mathscr C}
\providecommand{\overskrift}[1]{\par\noindent\relax{\LARGE #1}\par\bigskip}
\newcommand{\hovedfont}{\normalfont\bfseries}
	\theoremstyle{change}
\newtheorem{lemma}{Lemma.}[subsection]
\newtheorem{prop}[lemma]{Proposition.}
\newtheorem{thm}[lemma]{Theorem.}
\newtheorem{cor}[lemma]{Corollary.}
\newtheorem{eks}[lemma]{Example.}
\newtheorem{BM}[lemma]{Remark.}
\newtheorem{taller}[lemma]{$\!\!$}
\newenvironment{blanko}[1]%
{\begin{taller}{\hovedfont #1}\normalfont}%
{\end{taller}}
{%
\begin{list}{\em Definition. }%
{\setlength{\labelsep}{0mm}\setlength{\leftmargin}{0mm}%
\setlength{\labelwidth}{0mm}\setlength{\listparindent}{\parindent}%
\setlength{\parsep}{\parskip}\setlength{\partopsep}{0mm}}%
\item%
}%
{%
\end{list}%
}
\newenvironment{proof}%
{%
\begin{list}{\em Proof. }%
{\setlength{\labelsep}{0mm}\setlength{\leftmargin}{0mm}%
\setlength{\labelwidth}{0mm}\setlength{\listparindent}{\parindent}%
\setlength{\parsep}{\parskip}\setlength{\partopsep}{0mm}}%
\item%
}%
{%
\qed\end{list}%
}
\newenvironment{proof*}[1]%
{%
\begin{list}{\em #1 }%
{\setlength{\labelsep}{0mm}\setlength{\leftmargin}{0mm}%
\setlength{\labelwidth}{0mm}\setlength{\listparindent}{\parindent}%
\setlength{\parsep}{\parskip}\setlength{\partopsep}{0mm}}%
\item%
}%
{%
\qed\end{list}%
}
\newenvironment{blanko*}[1]%
{%
\begin{list}{\bf {#1} }%
{\setlength{\labelsep}{0mm}\setlength{\leftmargin}{0mm}%
\setlength{\labelwidth}{0mm}\setlength{\listparindent}{\parindent}%
\setlength{\parsep}{\parskip}\setlength{\partopsep}{0mm}}%
\item%
}%
{%
\end{list}%
}
\providecommand{\lastUpdate}[1]{#1}
\newcommand{\op}{^{\text{{\rm{op}}}}}
\DeclareMathOperator*{\colim}{colim}
\newcommand{\Sh}{\kat{Sh}}
\newcommand{\PrSh}{\kat{PrSh}}
\newcommand{\Grpd}{\kat{Grpd}}
\newcommand{\Set}{\kat{Set}}
\newcommand{\et}{\mathrm{et}}
\newcommand{\cancov}{\mathrm{cancov}}
\newcommand{\triv}{\,\pmb \shortmid\,}
\newcommand{\comma}{\raisebox{1pt}{$\downarrow$}}
\newcommand{\name}[1]{\ulcorner #1 \urcorner}
\newcommand{\Cor}{\kat{Cor}}
\newcommand{\res}{\operatorname{res}}
\newcommand{\iso}{\operatorname{iso}}
\newcommand{\Et}{\kat{Et}}
\newcommand{\el}{\operatorname{el}}
\newcommand{\id}{\operatorname{id}}
\newcommand{\compl}[1]{#1^{{\scriptscriptstyle{\complement}}}}
\renewcommand{\ps@headings}
	{\setlength{\headheight}{41pt}%
	 \setlength{\headsep}{12pt}%
	 \renewcommand{\@oddhead}{\parbox{\textwidth}{%
			\small
			\texttt{\jobname.tex \ \ \ \lastUpdate{2015-04-03 22:16}
			\hfill [\thepage/\pageref{lastpage}]}
			\\ \rule[8pt]{\textwidth}{0.3pt}}%
	 }
	\renewcommand{\@oddfoot}{}
	\renewcommand{\@evenfoot}{}%
}  
\renewcommand*\l@section[2]{%
  \ifnum \c@tocdepth >\z@
    \addpenalty\@secpenalty
    \addvspace{0.4em \@plus\p@}
    \setlength\@tempdima{1.5em}%
    \begingroup
      \parindent \z@ \rightskip \@pnumwidth
      \parfillskip -\@pnumwidth
      \leavevmode \bfseries
      \advance\leftskip\@tempdima
      \hskip -\leftskip
      #1\nobreak\hfil \nobreak\hb@xt@\@pnumwidth{\hss #2}\par
    \endgroup
  \fi}
\renewcommand{\tableofcontents}{%
   \begin{center}
\begin{minipage}{12cm}
   \begin{center}
     
     \vspace{8pt}
     
       \bf{\contentsname}
   \end{center}
	
       \vspace{-18pt}
	
   \footnotesize
   \begin{center}
\@starttoc{toc}
   \end{center}	
\end{minipage}
	\end{center}
	\addvspace{3em \@plus\p@}
}
\newcommand{\elGr}{\kat{elGr}}
\newcommand{\Gr}{\kat{Gr}}
\newcommand{\HGr}{\kat{HGr}}
\newcommand{\GR}{\kat{Gr}^+}
\newcommand{\Map}{\operatorname{Map}}
\newcommand{\wfrac}[2]{\begin{array}{c}{#1}\\ \hline\hline {#2}\end{array}}
\begin{document}

\setcounter{secnumdepth}{2}

\pagestyle{headings}

\vspace*{8pt}

\pagestyle{headings}

\vspace*{8pt}

\begin{center}

\overskrift{Graphs, hypergraphs, and properads}

\vspace{0.7\bigskipamount}

\noindent
\textsc{Joachim Kock}
\footnote{
Departament de Matem\`atiques,
Universitat Aut\`onoma de Barcelona,
Spain
}

\texttt{kock@mat.uab.cat}

\end{center}

\begin{abstract}
  A categorical formalism for directed graphs is introduced, featuring natural
  notions of morphisms and subgraphs, and leading to two elementary descriptions
  of the free-properad monad, first in terms of presheaves on elementary graphs,
  second in terms of groupoid-enriched hypergraphs.
%
\end{abstract}


\footnotesize

\tableofcontents

\normalsize

\section*{Introduction}

\addcontentsline{toc}{section}{\numberline{}Introduction}

Properads were introduced by Vallette~\cite{Vallette:0411542}, as a notion
intermediate between operads and props, featuring important notions
and results generalised from operads, such as Koszul duality. 
From a combinatorial viewpoint, properads are to connected acyclic directed
graphs (henceforth just called {\em graphs}) as operads are to rooted trees.  

A combinatorial approach to coloured properads and infinity-properads has been
developed by Hackney, Robertson and Yau~\cite{Hackney-Robertson-Yau}, based on a
somewhat elaborate notion of graph (due to Yau and Johnson~\cite{Yau-Johnson}),
whose notions of morphism and subgraph are derived from properad notions, in
turn defined in terms of an operation of graph substitution (also from~\cite{Yau-Johnson}).

The present work (which grew out of studying \cite{Hackney-Robertson-Yau})
proposes a different approach to the relationship between
graphs and properads, in which the starting point is a graph formalism featuring
natural notions of morphisms and subgraphs, and featuring colimits enough to
describe the free-properad monad in terms of presheaves on elementary (directed)
graphs, and to prove a nerve theorem, in close analogy with the approach to
graphs and coloured modular operads (compact symmetric 
multicategories) of Joyal-Kock~\cite{Joyal-Kock:0908.2675}.
A second feature of the graph formalism introduced is that it naturally extends to
hypergraphs, and neatly explains the dual role of graphs as carriers of algebraic
structures (\ref{dual}).

The theory is developed from scratch (finite sets, pullbacks, colimits),
and follows the case of operads \cite{Kock:0807.2874} as closely as
possible.

In the case of operads, there is a natural category of trees and tree embeddings
\cite{Kock:0807.2874}, with a subcategory of elementary trees, such that {\em
(coloured) collections} (the structure underlying coloured operads) are
precisely presheaves on elementary trees, or equivalently sheaves on trees.  The
free-operad monad is given by a simple colimit formula exploiting this
equivalence of categories.  The free operad on a tree is not again a tree, but
one discovery of \cite{Kock:0807.2874} is that nevertheless it {\em is} represented
by the same shape
$$
A\leftarrow E \to B \to A.
$$
This shape is that of {\em polynomial endofunctors}, and the 
free-operad monad restricts to the free-monad monad on 
polynomial endofunctors, where it has a direct combinatorial 
description in terms of these representing diagrams \cite{Kock:0807.2874}.

The same features are shared by the case of properads: a natural
category $\Gr$ of (connected, acyclic, directed) graphs is introduced, with
a subcategory $\elGr$ of \mbox{elementary} graphs, such that {\em (coloured) bi-collections}
(the structure
underlying coloured properads) are precisely presheaves on $\elGr$, or
equivalently sheaves on $\Gr$.  Again the free-properad monad is given by a
simple colimit formula exploiting this equivalence of categories.  This time,
however, the category of graphs $\Gr$ involves {\em etale maps} instead of just
embeddings, and the notion of sheaves is with respect to the etale topology.
This is a crucial difference: in contrast to embeddings, etale maps have
automorphisms (deck transformations), and for this reason, when trying to mimic
the representability feature, {\em groupoids} are required, instead of sets.
With this proviso, the analogy from trees goes through: while the free properad
on a graph is not again a graph, it {\em is} a diagram of the same shape (now in
groupoids), and this shape,
$$
A \leftarrow I \to N \leftarrow O \to A
$$
is that of {\em (groupoid-enriched) hypergraphs} (hypergraphs with `stacky'
nodes).  Again, the free-properad monad restricts to a monad on such
hypergraphs, and has a direct combinatorial interpretation in terms of these
representing diagrams: while a hypergraph is given by its elementary subgraphs
(or more precisely, by etale maps from elementary graphs), the free properad on
it is given by etale maps from arbitrary (connected) graphs.

\pagebreak

In summary, the main notions involved fit into the following
schematic relationship:

\begin{center}
  \small
 \begin{tabular}{ |c|c|c|c|c| }
    \hline
    elementary~tree & tree & polynomial endo. & presheaf~on elem.~trees &operad
    \\ 
    \hline
        elementary~graph & \phantom{x}graph\phantom{x} & hypergraph & presheaf~on elem.~graphs &
	\phantom{i}properad\phantom{i} \\
	\hline
\end{tabular}
\end{center}

The category $\Gr$ encodes `geometric' aspects of the combinatorics of graphs
--- open inclusions, etale maps, symmetries, colimits.
Again in analogy with the case of
operads and trees, the free-properad monad generates a bigger category of graphs
$\wtil\Gr$, whose new maps, the graph refinements, embody `algebraic' aspects
--- basically substitution (see \ref{sec:generic}).  This bigger category
$\wtil\Gr$ is featured in a nerve theorem (\ref{thm:nerve}), characterising
properads among presheaves on $\wtil\Gr$ in terms of a Segal condition.  The
category $\wtil\Gr$ is shown to have a weak factorisation system given by
refinements and etale maps.  Cutting down the right-hand class from etale maps
to convex open inclusions results in a smaller category, which is the one first
constructed by Hackney, Robertson and Yau~\cite{Hackney-Robertson-Yau}.

Some of the results in Subsections~\ref{sec:freeproperad} and \ref{sec:wtilGr}
have some overlap with results in
Hackney-Robertson-Yau~\cite{Hackney-Robertson-Yau}, as indicated in each case.
The reader is strongly encouraged to follow these references to compare with a
different approach with its own advantages.

\section{Graphs}

\subsection{Graphs}

In this work, the word `graph' means `directed graph with open-ended
edges' (and from Section~\ref{sec:properads} and onwards, graphs will be assumed
connected and acyclic).  We proceed to give the formal definition, whose merit is
the elegant way morphisms and subgraphs are encoded.
  All constructions take place in the category of finite sets.
  When numbers are used as sets, they denote a set with that many elements.

\begin{blanko}{Definition.}\label{def:graph}
  A {\em graph} is a diagram of finite sets
  \begin{equation}\label{eq:graph}
    \xymatrix{
    A & \ar[l]_s I \ar[r]^p & N & \ar[l]_q O \ar[r]^t & A
    }
  \end{equation}
  for which $s$ and $t$ are injective. 
  
  Throughout, for any individual graph under consideration,
  we shall use these symbols to refer to its constituents, if no confusion seems 
  likely.
\end{blanko}

\begin{blanko}{Terminology and interpretation.}
  The set $A$ is the set of {\em edges} (`A' for `arc' or `ar\^ete').  The set
  $N$ is the set of {\em nodes}.  The set $I$ is the set of {\em in-flags}, and
  the set $O$ is the set of {\em out-flags}.  The maps $s$ and $t$ return the
  edge in a flag, and the maps $p$ and $q$ return the node in a flag.  Saying
  that $s$ is injective means that every edge is the incoming edge of at most
  one node, and similarly injectivity of $t$ means that every edge is the
  outgoing edge of at most one node.

  An edge $a\in A$ is called an {\em inner edge} if it belongs to the
  intersection $O \cap I = O \times_A I \subset A$.  The complement is called
  the set of {\em ports}.  The complement of $s$, i.e.~the set of edges which
  are not incoming edges of any node, is called the set of {\em exports}.  The
  complement of $t$ is called the set of {\em imports}.
\end{blanko}

\begin{blanko}{Unit graph.}
  The graph with one edge and without nodes is given by
  $$
  \xymatrix{
  1 & \ar[l] 0 \ar[r] & 0 & \ar[l] 0 \ar[r] & 1 .
  }
  $$
  This edge is simultaneously an import and an export (indeed the unique edge is 
  neither in the image of $s$, nor in the image of $t$).
  It is called the {\em unit graph}, denoted $U$. 
  
  Note that the edge itself has no sense of direction.  The notion of direction
  in a graph is provided only by the nodes, owing to the distinction made
  between in-flags and out-flags.  (Compare with categories: an object has no
  sense of direction; arrows (operations) do have a direction, expressed by
  source and target.  Further explanation of this analogy is provided by the 
  ambient category of hypergraphs, cf.~\ref{dual} below.)
\end{blanko}

\begin{blanko}{Corollas.}
   The {\em corolla} with $m$ imports and $n$ exports, denoted $C^m_n$,
   is the graph with one node given by
$$
\xymatrix{
m+n & \ar[l] m \ar[r] & 1 & \ar[l] n \ar[r] & m+n ,
}
$$
with the outer maps the obvious sum inclusions.
As a special case we have the
graph $C^0_0$ with one node and no edges, given by
$$
\xymatrix{
0 & \ar[l] 0 \ar[r] & 1 & \ar[l] 0 \ar[r] & 0 .
}
$$
\end{blanko}

\begin{blanko}{Wheels.}
  The graph $W_1$ with one node and one `loop' (cf.~\ref{loops})
  is given by
$$
\xymatrix{
1 & \ar[l] 1 \ar[r] & 1 & \ar[l] 1 \ar[r] & 1 .
}
$$
More generally, the {\em wheel} of length $n\geq 1$, is the
graph denoted $W_n$ given by
$$
\xymatrix{
n & \ar[l]_s n \ar[r]^p & n & \ar[l]_q n \ar[r]^t & n ,
}
$$
for which all the structure maps $s,t,p,q$ are bijections, and such that
the composite bijection $t \circ q^{-1} \circ p \circ s^{-1} : n \isopil n$ is a
cyclic permutation of $n$.
\end{blanko}

\begin{blanko}{Philosophical remarks.}
    From the viewpoint of properads, the nodes in a graph represent
    operations with multiple in- and outputs.  From this viewpoint,
    it is natural to try to define graphs as pairs of multi-valued maps
    \begin{center}
    \begin{texdraw} 
	\htext (0 0) {$N$}
	\htext (-40 -40) {$A$}
	\htext (40 -40) {$A$}
	
	\move (-5 -5) \bsegment
 \lvec (-25 -25) \move (-31 -19) \lvec ( -19 -31) \lvec (-24 -36) \move ( -23 -27)
\lvec (-28 -32) \move (-27 -23) \lvec ( -32 -28) \move ( -31 -19) 
\lvec (-36 -24)
\esegment
	\move (5 -5) \bsegment
 \lvec (25 -25) \move (31 -19) \lvec (19 -31) \lvec (24 -36) \move (23 -27)
\lvec (28 -32) \move (27 -23) \lvec (32 -28) \move (31 -19) 
\lvec (36 -24)
\esegment
\htext (-20 -7){\footnotesize in}
\htext (20 -7){\footnotesize out}
\end{texdraw}
    \end{center}
 A standard way  to encode a multi-valued map is as a span.
 Hence we arrive at the shape \eqref{eq:graph}.
 
 On the other hand, since a closed directed graph is an endospan $E 
 \rightrightarrows V$ (source and target of an edge),  a directed
 graph admitting open-ended edges should be the same but with just partially defined maps.
 A standard way to encode a partially defined map is as a span in 
 which the backward arrow is an injection, hence again we arrive at 
 our shape of diagrams for a graph.
 
 These dual viewpoints also point towards the natural relationship
 with hypergraphs: the shape is naturally the juxtaposition of the
 incidence relations a-hyperedge-being-the-input-of-a-node and
 a-hyperedge-being-the-output-of-a-node, which is one way to represent
 directed hypergraphs, cf.~\ref{hyp} below.
 
 A main feature and motivation for the present graph implementation
 are the elegant notions of
 morphisms that follow from the definition.
\end{blanko}

\begin{blanko}{Morphisms.}
  A {\em morphism of graphs} is a commutative diagram
\begin{equation}\label{eq:map}
\xymatrix{
A' \ar[d]_\alpha& \ar[l] I'\ar[d] \ar[r] & N'\ar[d] & \ar[l] O'\ar[d] 
\ar[r] & A'\ar[d]^\alpha \\
A & \ar[l] I \ar[r] & N & \ar[l] O \ar[r] & A
}   
\end{equation}
Graphs and morphisms of graphs form a category denoted $\GR$.

Note that a morphism sends edges to edges and nodes to nodes, respecting the
incidence relations.  In particular it sends inner edges to inner edges.
Ports are not necessarily sent to ports: a
port may be sent to an inner edge.

A morphism is {\em etale} if the two squares in the middle are 
pullbacks:
$$
\xymatrix{
A' \ar[d]_\alpha& \ar[l] I'\ar[d] \drpullback\ar[r] & N'\ar[d]^\varphi & 
\ar[l]\dlpullback O'\ar[d] 
\ar[r] & A'\ar[d]^\alpha \\
A & \ar[l] I \ar[r] & N & \ar[l] O \ar[r] & A
}
$$
Equivalently, for each node $x\in N'$, we have bijections $I'_x \simeq 
I_{\varphi x}$ and $O'_x \simeq 
O_{\varphi x}$, the subscripts indicating fibres.
Hence etale means arity preserving.
Denote by $\GR_{\et}$ the category of graphs and etale maps.
(The notion of etale map has a clear intuitive content.  It also fits
into the axiomatic notion of classes of etale maps
of Joyal-Moerdijk~\cite{Joyal-Moerdijk:openmaps}.)

A {\em graph inclusion} is a morphism which is levelwise 
injective.  A {\em subgraph} of a graph $G$ is an equivalence class of 
graph inclusions into $G$. 
An {\em open subgraph} is a subgraph whose inclusion is etale.

\end{blanko}

\begin{eks}
  The unique map $C^1_1 \to W_1$,
  $$
  \xymatrix{
  \{a,b\} \ar[d]& \ar[l] \{a\}\ar[d] \ar[r] & 1\ar[d] & \ar[l]\{b\}\ar[d] 
  \ar[r] & \{a,b\}\ar[d] \\
  1 & \ar[l] 1 \ar[r] & 1 & \ar[l] 1 \ar[r] & 1
  }
  $$
  is etale but not a graph inclusion.
\end{eks}

\begin{blanko}{Port-preserving maps.}
The inclusion of an edge is a diagram
$$
\xymatrix{
1 \ar[d]_e& \ar[l] 0\ar[d] \ar[r] & 0\ar[d] & \ar[l]0\ar[d] 
\ar[r] & 1\ar[d]^e \\
A & \ar[l] I \ar[r] & N & \ar[l] O \ar[r] & A
}
$$
It is an import precisely when the right-most square is a pullback, and it is an 
export precisely when the left-most square is a pullback.
Correspondingly, a morphism of graphs 
is called {\em import preserving} if the right-most square is a
pullback, and {\em export preserving} if the left-most square is a
pullback.
\end{blanko}

\begin{blanko}{Isolated nodes.}
  A node $x$ in a graph is called {\em isolated} when both $I_x$
  and $O_x$ are empty.
\end{blanko}
\begin{prop}\label{GGAA}
  Except in the case where $G'$ has an isolated node, a morphism of graphs
  $G'\to G$ is completely determined by its values on edges.  Precisely, 
  the natural map \linebreak $\Hom_{\GR}(G',G) \to \Hom_{\Set}(A',A)$ is injective.
\end{prop}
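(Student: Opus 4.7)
The plan is to unpack the commuting diagram defining a graph morphism and exploit the injectivity of $s$ and $t$ (built into the definition of a graph) to reconstruct the in-flag, out-flag, and node components from the edge component $\alpha$.

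First I would suppose we are given two morphisms $G' \to G$ with the same edge component $\alpha : A' \to A$, and show they agree on in-flags. Writing the in-flag component as $\iota : I' \to I$, commutativity of the left-middle square gives $s \circ \iota = \alpha \circ s'$, where $s, s'$ are the in-flag-to-edge maps of $G$ and $G'$. Since $s : I \to A$ is injective by the graph axiom, $\iota$ is uniquely determined by $\alpha \circ s'$, namely $\iota(f) = s^{-1}\bigl(\alpha(s'(f))\bigr)$ for every $f \in I'$ (the right-hand side is automatically in the image of $s$ because the morphism exists). The symmetric argument, using injectivity of $t$, shows that the out-flag component $\omicron : O' \to O$ is equally determined by $\alpha$.

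It remains to recover the node component $\nu : N' \to N$. This is where the hypothesis enters. For any non-isolated node $x \in N'$, pick either an in-flag $f \in I'$ with $p'(f) = x$ or an out-flag $g \in O'$ with $q'(g) = x$. Commutativity of the appropriate square gives
$$
\nu(x) = \nu(p'(f)) = p(\iota(f)) \qquad \text{or} \qquad \nu(x) = \nu(q'(g)) = q(\omicron(g)),
$$
and the right-hand sides are already determined by $\alpha$ via the previous step. Since every node of $G'$ is assumed non-isolated, this pins down $\nu$ completely, and the proof is finished.

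I do not expect a real obstacle: the injectivity of $s$ and $t$ does all the heavy lifting, and the role of the hypothesis is simply to guarantee at least one flag at each node so that $\nu$ can be read off via $p$ or $q$. The only point to be careful about is that the prescription for $\nu(x)$ is well-defined, i.e.~independent of the chosen flag $f$ or $g$; but any two such choices give the same value by the commutativity of the squares applied to a single fixed morphism, so the reconstruction formula is unambiguous.
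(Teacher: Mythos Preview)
Your proof is correct and follows essentially the same approach as the paper: use injectivity of $s$ and $t$ to recover the flag components from $\alpha$, then use the no-isolated-node hypothesis to recover the node component via $p$ or $q$. The paper's proof is simply a terser version of what you wrote; your final paragraph about well-definedness is harmless but unnecessary, since you are comparing two given morphisms rather than constructing one.
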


\begin{proof}
  The injectivity axiom implies that $A'\to A$ determines also $I'\to I$
  and $O' \to O$.  If $x \in N'$ is a node in $G'$, since it is assumed not
  to be isolated, it must be the image of some flag, either in $I'$ or in $O'$.
  In either case its image is forced by the image of that flag.
\end{proof}

\begin{blanko}{Relation with general graphs in the sense of Joyal-Kock.}
  Recall that according to \cite{Joyal-Kock:0908.2675}
  a {\em Feynman graph} is a diagram of finite sets
$$
\xymatrix {
E \ar@(lu,ld)[]_i & H \ar[l]_s \ar[r]^t & V
}
$$
such that $s$ is injective and $i$ is a fixpoint-free involution.

  The data of a directed graph in the sense of \ref{def:graph} can 
  equivalently be presented as 
  $$
  \xymatrix  @!0 @C=11pt  {
  \ar@(lu,ld)[]_-i \phantom{E}&A+A&&&&&&& \ar[lllllll]_-{t+s} I+O \ar[rrrrrrr]^-{\langle p,q 
  \rangle}&&&&&&&N,}
  $$
  where $i$ is the natural involution on $A+A$.
  Hence a directed graph has an
  underlying Feynman graph.  This is easily seen to be the object part of a 
  faithful functor from directed graphs (and etale maps) to Feynman graphs.
  In fact, directed graphs in the sense of \ref{def:graph}
  are precisely $D$-graphs for a certain graphical 
  species $D$, in the sense of~\cite{Joyal-Kock:0908.2675}.
  
  For further comparison between different implementations of the notion of 
  graph, see \cite{Batanin-Berger:1305.0086}.
\end{blanko}

\subsection{Connectedness and acyclicity}

We shall be concerned mostly with connected acyclic graphs.
We proceed to introduce these notions.

\begin{blanko}{Sums.}
  The category $\GR$ (as well as the subcategory $\GR_\et$) has categorical
  sums, and the empty graph
  $$
  \xymatrix{
  0 & \ar[l] 0 \ar[r] & 0 & \ar[l] 0 \ar[r] & 0 .
  }
  $$
  as initial object.  Sums are calculated level-wise.
  They amount to disjoint union of graphs.
\end{blanko}

\begin{blanko}{Connectedness.}
  Recall that $W_1$ is the graph $1=1=1=1=1$ with one node and one loop (in fact the 
  terminal object in $\GR$ (but not in $\GR_\et$)).
  A graph $X$ is {\em connected} if
  $$
  \Hom(X, W_1\!+\!W_1) = 2.
  $$
  In other words, $X$ is non-empty and every morphism to $W_1\!+\!W_1$ is 
  constant.  Equivalently, $X$  is  non-empty and is not a 
  sum of smaller graphs.  (Equivalently, in its most categorical formulation,
  a graph $X$ is connected when $\Hom(X, -)$ preserves finite sums.)
\end{blanko}

\begin{blanko}{Example.}
  A graph for which all the structure maps are bijections is 
  precisely a disjoint union of wheels.  In fact the full
  subcategory spanned by graphs of this type is
  equivalent to the category of finite-sets-with-a-permutation (by
  cycle-decomposition of permutations).
\end{blanko}

\begin{blanko}{Acyclicity.}
  A graph $X$ is called {\em acyclic} (or {\em wheel-free}) if
  $$
  \Hom(W_n, X) = 0, \quad\forall n>0 .
  $$
  In other words, $X$ does not admit a morphism from any 
  wheel, or equivalently, does not contain a wheel as a subgraph.
\end{blanko}

\begin{blanko}{Trees and linear graphs.}\label{trees}
    An acyclic graph is a {\em forest} when $q$ is a bijection.
    An acyclic graph is a {\em tree} when $q$ is a bijection and there is a 
    unique export (compare~\cite{Kock:0807.2874}).  It is a 
    {\em linear tree} (or {\em linear graph})
    if both $p$ and $q$ are bijections, and there is a 
    unique export.  Denote by $L_k$ the linear graph with $k$ nodes.
\end{blanko}
 
\begin{blanko}{Loops.}\label{loops}
  A {\em loop} is an edge which is simultaneously an input
  and an output for the same node.  In other words, $a\in A$ is a loop
  if there is a node $x$ such that $a \in I_x \cap O_x$.
  Equivalently, a loop in $X$ is the image edge of a map $W_1 \to X$.
  Accordingly, a node is {\em loopfree} if $I_x + O_x \to A$ is
  injective.  
  A graph is {\em loopfree}
  if every node is loopfree. 
  From the $W_1$-characterisation of loops, it is clear that
  an acyclic graph is loopfree.
\end{blanko}

\subsection{Closed-graph adjunction}

\begin{blanko}{Closed graphs.}
  A {\em closed graph} is a directed graph in the classical sense,
  i.e.~a presheaf on $\bullet \leftleftarrows \bullet$.  We use the
  standard letters
  $$
  \xymatrix{ E \ar[r]<+3pt>^s \ar[r]<-3pt>_t & V} .
  $$
  To a closed graph is associated a graph in the sense of \ref{def:graph}, namely
  $$
  \xymatrix{ E & \ar[l]_{=} E \ar[r]^t & V & \ar[l]_s E \ar[r]^{=} & E .}
  $$
  This defines a fully faithful functor from closed graphs to graphs.
  Its essential image is the subcategory of graphs for which the end maps
  are bijections.  We also call these closed graphs.
\end{blanko}

\begin{blanko}{The core of a graph.}
  The {\em core} of a graph $X$ is the closed graph 
  $O\times_A I \rightrightarrows N$ 
  given by the diagram
  $$
\xymatrix@!@-30pt{&& \ar[ld] O \times_A I \ar[rd] && \\
& \ar[ld]_q O \ar[rd]^t && \ar[ld]_s I \ar[rd]^p & \\
N & & A & & N  .
}
$$
It is denoted $X^\bullet$.  Viewed as a graph, the core is represented as 
follows, together with its canonical map to $X$:
$$
\xymatrix{
X^\bullet \ar[d]_\epsilon&: & O \times_A I\ar[d]& \ar[l]_{=} O \times_A I\ar[d] \ar[r] & N\ar[d] & 
\ar[l] O \times_A I\ar[d] 
\ar[r]^{=} & O \times_A I\ar[d] \\
X &:&A & \ar[l] I \ar[r] & N & \ar[l] O \ar[r] & A   .
}
$$
Taking core amounts to
deleting all ports (i.e.~replacing the set $A$  by
the subset $O \times_A I$ of inner edges), hence $X^\bullet$ is 
a subgraph of $X$.
It is clear that taking core is functorial, and it follows easily from the
universal property of the pullback that
\begin{prop}
  Taking core is right adjoint to the inclusion of closed graphs into 
  graphs.  The counit is $\epsilon$.
\end{prop}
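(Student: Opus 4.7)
The plan is to produce directly a natural bijection $\Hom_\GR(\iota Y, X) \cong \Hom(Y, X^\bullet)$ (the right-hand Hom taken in closed graphs) realised by postcomposition with $\epsilon$, using the universal property of the pullback defining $O \times_A I$.

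First I would unpack a morphism $f \colon \iota Y \to X$. Writing $Y = (E \rightrightarrows V)$ with source $s^Y$ and target $t^Y$, the graph $\iota Y$ has $A' = I' = O' = E$ and $N' = V$, with outer structure maps $\id_E$ and inner structure maps $t^Y$, $s^Y$. Thus $f$ consists of an edge component $\alpha \colon E \to A$, flag components $\beta \colon E \to I$ and $\gamma \colon E \to O$, and a node component $\varphi \colon V \to N$. Commutativity of the two outer squares of the morphism diagram gives $\alpha = s \beta = t \gamma$; since $s, t$ are injective, $\beta$ and $\gamma$ are then already determined by $\alpha$, and the image of $\alpha$ lies in $s(I) \cap t(O) = O \times_A I$.

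By the universal property of the pullback, the commuting pair $(\beta, \gamma)$ assembles uniquely into $\tilde\alpha \colon E \to O \times_A I$ with $\pi_I \tilde\alpha = \beta$ and $\pi_O \tilde\alpha = \gamma$. Define $\Phi(f) = (\tilde\alpha, \varphi)$. Since the source and target of the core $X^\bullet$ are $q \circ \pi_O$ and $p \circ \pi_I$, compatibility of $\Phi(f)$ with source and target is precisely a restatement of the commutativity of the two middle squares of $f$; hence $\Phi(f)$ is a morphism of closed graphs $Y \to X^\bullet$.

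Finally, inspecting the definition of $\epsilon$ shows that $\epsilon \circ \iota(\Phi(f))$ has edge component $(O \times_A I \hookrightarrow A) \circ \tilde\alpha = \alpha$, flag components $\pi_I \tilde\alpha = \beta$ and $\pi_O \tilde\alpha = \gamma$, and node component $\varphi$, so it equals $f$; conversely $\Phi(\epsilon \circ \iota(g)) = g$ by uniqueness in the pullback factorization. Naturality in both variables is automatic. The crucial---and essentially only nontrivial---step is the identification $s(I) \cap t(O) = O \times_A I$ via the injectivity axioms of \ref{def:graph}, which produces the pullback factorization; this is the heart of the argument, but is immediate from the setup.
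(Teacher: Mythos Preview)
Your proof is correct and is precisely the argument the paper has in mind: the text does not spell out a proof but simply remarks that the adjunction ``follows easily from the universal property of the pullback'', and your proposal is exactly that computation carried out in full. One small remark: the injectivity of $s$ and $t$ is not actually needed to run the pullback argument (the factorisation $\tilde\alpha$ exists by the universal property alone, regardless of whether $\beta,\gamma$ are determined by $\alpha$), so your final sentence slightly overstates its role---but this does not affect the validity of the proof.
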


%
%
\end{blanko}

\begin{blanko}{Core equivalences.}
  A graph map $f: Y \to X$ is called a {\em core equivalence} if $f^\bullet :
  Y^\bullet \to X^\bullet$ is invertible.
\end{blanko}

\begin{lemma}
  The etale maps are precisely the maps right orthogonal to the class of core
  equivalences between connected graphs, or equivalently, right orthogonal to
  the class of all maps between corollas.  The inclusions are precisely the maps
  right orthogonal to both $U+U\to U$ and $C_0^0+C^0_0 \to C^0_0$.
\end{lemma}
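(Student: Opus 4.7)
I plan to treat the two statements separately, starting with the easier characterisation of inclusions. Lifting against $U+U\to U$ forces any two edges of the source with common image to coincide, which is precisely injectivity on edges; lifting against $C^0_0+C^0_0\to C^0_0$ similarly enforces injectivity on nodes. Since $s$ and $t$ are always injective, edge-injectivity propagates automatically to in-flag and out-flag injectivity, so the maps right orthogonal to both generators are exactly the levelwise injective maps, i.e.\ the graph inclusions.

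For the characterisation of etale maps, I first note that every morphism between corollas is itself a core equivalence between connected graphs: a corolla is connected, and its core reduces to the single node since $s(I)\cap t(O)=\emptyset$. Hence the maps between corollas form a subclass of the core equivalences between connected graphs, and the plan is to close the circle by establishing the implications
\[
\text{(orthogonal to maps between corollas)}\Longrightarrow \text{etale} \Longrightarrow \text{(orthogonal to core equivalences between connected graphs)}.
\]

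For the first implication, given $f\colon X'\to X$ and a node $x\in N'$ with $f(x)=y$, I will test unique lifting against the canonical morphism $C^0_0\to C^m_0$ with $m=|I_y|\geq 1$ (the case $m=0$ is automatic, since any in-flag at $x$ must have image in $I_y$), using as top map $C^0_0\to X'$ the selection of $x$ and as bottom map $C^m_0\to X$ the corolla at $y$ equipped with the identity labelling of in-flags. A diagonal filler is precisely a section of $I'_x\to I_y$, so unique liftability forces this map to be a bijection. The analogous test against $C^0_0\to C^0_n$ handles out-flags, and together these give etaleness.

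For the second implication, given etale $f$, a core equivalence $g\colon G\to H$ between connected graphs, and a commutative square with top $G\to X'$ and bottom $H\to X$, the node bijection provided by $g$ forces the node part of any lift $\ell\colon H\to X'$ through the top map, and the etale pullback bijections at each node uniquely lift the flag data of the bottom map to $X'$; the edge map of $\ell$ is then forced by the flag data, since a connected graph with nodes has no free edges (and the edge-only case $H=U$ forces $G=U$ and $g=\id$, where the lift is trivial). The main obstacle is verifying that this $\ell$, constructed entirely from the bottom map, is nevertheless compatible with the top map $G\to X'$. For any in-flag $\varphi$ of $G$, the two candidate images in $X'$ --- via $G\to X'$ directly, versus via $G\to H\xrightarrow{\ell}X'$ --- lie at the same node and compose with $f$ to the same in-flag of $X$ (by commutativity of the outer square together with the defining property of $\ell$ on the bottom); injectivity of the etale bijection then forces them to coincide. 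The same pattern handles out-flags and (via $s^{X'}$, $t^{X'}$) the edge map, and uniqueness of $\ell$ is built into its stepwise construction.
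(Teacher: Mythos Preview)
The paper states this lemma without proof, so there is nothing to compare against; your argument stands on its own, and it is essentially correct.

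The characterisation of inclusions is clean and complete: lifting against the two fold maps detects injectivity on $A$ and on $N$, and injectivity on $I$ and $O$ then follows from injectivity of $s$ and $t$ as you say.

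For the etale characterisation, your circle of implications is the right strategy, and both implications go through. One point deserves a sentence more of care. When you say ``the edge map of $\ell$ is then forced by the flag data'', you are implicitly claiming that on an inner edge $a=s_H(\phi)=t_H(\psi)$ of $H$ the two prescriptions $s_{X'}(\ell_I(\phi))$ and $t_{X'}(\ell_O(\psi))$ agree. This is true, but it is not a consequence of the lower triangle alone (since $f_A$ need not be injective); it uses precisely the flag-compatibility with the top map that you establish in the next paragraph. Concretely: the inner edge $a$ has a unique preimage $a'$ in $G$ under the core bijection, with $a'=s_G(\phi')=t_G(\psi')$ and $g_I(\phi')=\phi$, $g_O(\psi')=\psi$; your argument that $\ell_I(g_I(\phi'))=(\text{top})_I(\phi')$ then gives $s_{X'}(\ell_I(\phi))=(\text{top})_A(a')=t_{X'}(\ell_O(\psi))$. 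So the well-definedness of $\ell_A$ and the verification of the upper triangle on flags are really a single step, and the exposition would be smoother if you present them together (prove the flag compatibility first, then read off both the definition of $\ell_A$ and the upper triangle on edges). With that reordering the proof is complete.
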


\subsection{Canonical neighbourhoods, covers and hulls}

\begin{blanko}{Canonical cover.}\label{cancov}
  Given a subset of nodes $N'\subset N$ of a graph $X$, construct a new graph
  as
$$
\xymatrix{
&I'+O' \ar[d]& \ar[l] I'\ar[d] \drpullback\ar[r] & N'\ar[d] & 
\ar[l]\dlpullback O'\ar[d] 
\ar[r] & I'+O'\ar[d] \\
X:&A & \ar[l] I \ar[r] & N & \ar[l] O \ar[r] & A   ,
}
$$
clearly a disjoint union of corollas.
Jointly they cover the nodes in $N'$.  When $N'=N$, this is
called the {\em canonical etale cover} of $X$, denoted $\cancov(X)$.
(It is an open cover iff $X$ is loopfree.)  When $N'$ consists of a single node 
$x\in N$, the construction gives the {\em canonical neighbourhood} of $x$,
an open subgraph when $x$ is loopfree.
\end{blanko}

\begin{blanko}{Open hull.}
  In the same situation, the {\em open hull} of $N'$ is defined by gluing the
  edges of the corollas according to their incidences in $X$, to obtain an open
  subgraph in $X$.  
  The notion of gluing will be formalised below.  In the present situation,
  it means taking union inside $A$ instead of disjoint
  union: simply take the image factorisation of $I'+O'\to A$.  Note that this
  includes also any existing loops at the nodes.  By the universal property of
  union, it is the smallest open subgraph having $N'$ as set of nodes.
\end{blanko}

\begin{blanko}{Etale hull.}\label{etalehull}
  Slightly more involved is the construction of the {\em etale hull of a
  subgraph}.  In this situation we are given a subgraph $G \subset X$, and we
  want to factor the inclusion as a core equivalence followed by an
  etale map.  The construction of flags is as before (forced by the etale
  requirement).  It remains to construct the correct edge set: it is a certain
  pushout, over the set of inner edges of $G$. 
  It will be important to consider a slightly more general situation.
A map of graphs $X'\to X$ is called {\em locally injective} when for each $x\in 
N'$ we have that $I'_x \to I_{\varphi x}$ and $O'_x \to O_{\varphi x}$ are injective.
\end{blanko}

\begin{prop}\label{prop:coreeq-etale}
    Given a locally injective map of graphs $f:X' \to X$, there exists a
    factorisation
    $$\xymatrix{
    X' \ar[rr]^f \ar[rd]_c && X \\
    & Y \ar[ru]_e &
    }$$
  where $c$ is a core equivalence and $e$ is etale.
  Among these factorisations, there is an essentially unique one for which
  $c$ is furthermore bijective on unit components.
\end{prop}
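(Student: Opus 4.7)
The plan is to build $Y$ explicitly by forcing its flags to be the pullbacks from $X$ (which makes $e$ etale automatically) and its edge set to be a pushout that inserts the edges of $X'$ without disturbing the inner edges (which makes $c$ a core equivalence). Uniqueness will then follow by showing that every piece of data of an alternative $Y'$ is pinned down by the combined constraints.

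Concretely, set $N_Y := N'$, and take the pullback flag sets $I_Y := N' \times_N I$ and $O_Y := N' \times_N O$, each equipped with its projection to $N_Y$ and its canonical map to $I$, resp.\ $O$. Local injectivity of $f$ is precisely what yields canonical injections $\iota_I \colon I' \hookrightarrow I_Y$ and $\iota_O \colon O' \hookrightarrow O_Y$ from the universal property of the pullback. For the edges, form the pushout in $\Set$
\[
A_Y \;:=\; A' \sqcup_{I' \sqcup O'} \bigl(I_Y \sqcup O_Y\bigr),
\]
where the left leg is $s' + t'$ and the right leg is $\iota_I + \iota_O$. Define $s_Y \colon I_Y \to A_Y$ and $t_Y \colon O_Y \to A_Y$ as the natural insertions. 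Then $c \colon X' \to Y$ is the identity on nodes, the maps $\iota_I, \iota_O$ on flags, and the pushout inclusion on edges; while $e \colon Y \to X$ is $\varphi$ on nodes, the pullback projections on flags, and the pushout-induced map on edges (assembled from $\alpha \colon A' \to A$ together with $I \hookrightarrow A$ and $O \hookrightarrow A$).

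The verification breaks into small pieces. Injectivity of $s_Y$ and $t_Y$ is read off from the equivalence classes in the pushout: the class of any $i_Y \in I_Y$ consists of $i_Y$ together with at most one element of $A'$ and at most one element of $O_Y$, and distinct flags yield disjoint classes. Etaleness of $e$ is built in. For the core equivalence, $c$ is the identity on nodes, and any inner edge of $Y$ is a pair $(i_Y, o_Y)$ with $s_Y(i_Y) = t_Y(o_Y)$ in $A_Y$; tracing the pushout relation forces $i_Y \in \iota_I(I')$ and $o_Y \in \iota_O(O')$ coming from a common inner edge of $X'$. Bijectivity on unit components follows because the unit components of either graph are precisely the pure-port edges, and the pushout preserves pure-port edges of $A'$ as pure-port edges of $A_Y$.

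For uniqueness, given another factorisation $X' \xrightarrow{c'} Y' \xrightarrow{e'} X$ satisfying the same conditions, every piece of data of $Y'$ is forced: nodes by core equivalence, flags by etaleness together with local injectivity of $c'$, inner edges by core equivalence again, in-port edges indexed by $I_Y \setminus \iota_I(I')$ and out-port edges by $O_Y \setminus \iota_O(O')$ (each flag must attach to its own port by injectivity of $s_{Y'}, t_{Y'}$, and no further identifications are possible without creating spurious inner edges or destroying unit components), and isolated edges matched with the unit components of $X'$ by hypothesis. This reproduces $Y$ up to canonical isomorphism. The main obstacle is the careful bookkeeping in the pushout: ensuring that $s_Y$ and $t_Y$ remain injective while no \emph{new} inner edges appear --- i.e.\ no coincidence $s_Y(i_Y) = t_Y(o_Y)$ in $A_Y$ beyond those already present in $X'$ --- which is where essentially all the technical content of the argument lies.
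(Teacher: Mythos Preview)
Your proof is correct and follows essentially the same approach as the paper: both take $N_Y=N'$, define the flag sets as pullbacks (forcing $e$ etale), and build the edge set by a pushout. The only difference is cosmetic: the paper forms the minimal edge set as $I_Y\sqcup_{E'}O_Y$ (over the inner-edge set $E'=O'\times_{A'}I'$) and then adjoins the unit components of $X'$ by hand, whereas your pushout $A'\sqcup_{I'\sqcup O'}(I_Y\sqcup O_Y)$ carries those unit components along automatically; the two constructions are canonically isomorphic.
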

  
\begin{proof}
  Given
  $$
\xymatrix{
\phantom{::}X': \ar[d] & & A' \ar[d]& \ar@{ >->}[l] I'\ar[d] \ar[r]
& N'\ar[d] & 
\ar[l] O'\ar[d] 
\ar@{ >->}[r] & A'\ar[d] \\
\phantom{:}X: &&A & \ar@{ >->}[l] I \ar[r] & N & \ar[l] O \ar@{ >->}[r] & A
}
$$
we first take the following pullbacks:
  $$
\xymatrix{
A' \ar[dd]& \ar@{ >->}[l] I'\ar@{ >->}[d] \ar[r] & N'\ar@{=}[d] & 
\ar[l] O'\ar@{ >->}[d] 
\ar@{ >->}[r] & A'\ar[dd] \\
&  I''\ar[d] \drpullback\ar[r] & N''\ar[d] & 
\ar[l]\dlpullback O''\ar[d] 
 &  \\
A & \ar@{ >->}[l] I \ar[r] & N & \ar[l] O \ar@{ >->}[r] & A
}
$$
(the vertical injections by local injectivity).
These choices of $I''$, $N''$ and $O''$ are forced by the requirement that
the first map be bijcetive on nodes and the second etale.  It remains to
see if we can construct the edge set $A''$.  
The set $E''$ of {\em inner} edges of $Y$ must be $E'= O'\times_{A'} I'$, the set of 
inner edges of $X'$.
The minimal choice of $A''$ to achieve this is the pushout
$$
\xymatrix @=8pt @! {
  && \ar@{ >->}[ld] E' \ar@{ >->}[rd]&& \\
  & O' \ar@{ >->}[ld] && I'\ar@{ >->}[rd] & \\
 O''\ar@{ >->}[rrdd] &&&& I'' \ar@{ >->}[lldd] \\
 &&&&\\
&& A''  . &&
}
$$
(This is also a pullback, since the maps are injective.)
This choice amounts to giving $Y$ no isolated edges.  This works if also $A'$
is given as a pushout over $E'$ (which amounts to saying that already $X'$ had no
isolated edges):  in this case the map $A'\to A$ factors uniquely through $A''$
by the universal property of the pushout.  Otherwise, if $X'$ has isolated edges,
we need to add the same number of isolated edges to $Y$, that is, to add the 
same number of elements to $A''$.  Hence the extra requirement that the map 
$c$ should be bijective on unit edges hence fixes the choice of $A''$ uniquely.
\end{proof}
 

%
%
%
%
%

\subsection{Pushouts, coequalisers, and colimits over graphs}

\begin{blanko}{Gluing data.}
  A {\em shrub} is a disjoint union of unit graphs, i.e.~a graph of the form
  $$
    \xymatrix{
    S & \ar[l] 0 \ar[r] & 0 & \ar[l] 0 \ar[r] & S ,
    }
  $$
  where $S$ is a finite set.
  A {\em gluing datum} of a graph $G$ consist of a shrub $S$ with two injections 
  into $G$, 
    $$
    \xymatrix{ S \ar@{ >->}[r]<+3.5pt>^{\operatorname{ex}} \ar@{ 
    >->}[r]<-3.5pt>_{\operatorname{im}} & G ,}
    $$
  one export preserving, the other import preserving.
\end{blanko}

\begin{prop}\label{prop:et+bijnodes}
  The category of graphs $\GR$ admits coequalisers of gluing data
  $S \rightrightarrows G$.
  The quotient map $G \to Q$ is etale and bijective on nodes.
\end{prop}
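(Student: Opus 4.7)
The plan is to construct $Q$ componentwise. Since a shrub has empty flag and node sets, the two maps $\operatorname{ex},\operatorname{im}:S\rightrightarrows G$ differ only on edges, via underlying edge maps $\operatorname{ex}_A,\operatorname{im}_A : S\to A$. Let $q:A\twoheadrightarrow A/{\sim}$ be the coequaliser in finite sets of $\operatorname{ex}_A$ and $\operatorname{im}_A$, i.e.\ the quotient by the equivalence relation generated by $\operatorname{ex}(x)\sim\operatorname{im}(x)$ for $x\in S$. Take $Q$ to be the diagram
$$
\xymatrix{ A/{\sim} & \ar[l]_-{q\circ s} I \ar[r]^-{p} & N & \ar[l]_-{q} O \ar[r]^-{q\circ t} & A/{\sim}  }
$$
with flags and nodes inherited from $G$. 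The only genuine content is then to verify that $Q$ is actually a graph, i.e.\ that $q\circ s$ and $q\circ t$ are still injective; granting this, the universal property of the coequaliser, bijectivity on nodes, and etaleness all fall out immediately.

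The combinatorial core is an analysis of the $\sim$-classes, and is the main obstacle. I would consider the auxiliary $1$-dimensional complex $\Gamma$ on vertex set $A$ with an edge $\{\operatorname{ex}(x),\operatorname{im}(x)\}$ labelled by each $x\in S$. Since $\operatorname{ex}$ and $\operatorname{im}$ are each injective on $S$, every vertex of $\Gamma$ is incident to at most one $\operatorname{ex}$-labelled edge and at most one $\operatorname{im}$-labelled edge, so $\Gamma$ has maximum degree $2$; its connected components --- which are precisely the $\sim$-classes --- are therefore isolated points, paths, or cycles. The export-preserving hypothesis on $\operatorname{ex}$ says $\operatorname{ex}(S)\cap s(I)=\emptyset$, and the import-preserving hypothesis on $\operatorname{im}$ says $\operatorname{im}(S)\cap t(O)=\emptyset$.

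For injectivity of $q\circ s$, I would suppose that distinct edges $a\neq a'$ in $s(I)$ lay in a common component, necessarily a path $a_0,\dots,a_n$ with $a_0=a$, $a_n=a'$ and $n\geq 1$. Since $a_0\notin\operatorname{ex}(S)$, the unique edge of $\Gamma$ at $a_0$ must be $\operatorname{im}$-labelled there, forcing $a_0=\operatorname{im}(x_0)$ and $a_1=\operatorname{ex}(x_0)$. At every interior vertex the two incident labels must differ (two $\operatorname{ex}$-labels would violate injectivity of $\operatorname{ex}$, similarly for $\operatorname{im}$), and a straightforward induction yields $a_k=\operatorname{ex}(x_{k-1})$ for all $1\leq k\leq n$. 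This forces $a'\in\operatorname{ex}(S)$, contradicting $a'\in s(I)$. The argument for $q\circ t$ is symmetric, using $\operatorname{im}(S)\cap t(O)=\emptyset$.

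Once $Q$ is confirmed a graph, its universal property is inherited from that of $A/{\sim}$, together with the trivial agreement of $\operatorname{ex}$ and $\operatorname{im}$ on the empty flag and node components of the shrub. The quotient map $G\to Q$ is the identity on $I$, $N$, $O$, so it is tautologically bijective on nodes; the two middle squares of its morphism diagram are identity squares and hence trivially pullbacks, so $G\to Q$ is etale.
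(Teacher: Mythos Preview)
Your proof is correct and follows the same strategy as the paper's: compute the coequaliser levelwise in the ambient diagram category (where it exists trivially, since the shrub contributes nothing at the $I$, $N$, $O$ levels), then verify the injectivity conditions on $q\circ s$ and $q\circ t$. Your combinatorial analysis via the auxiliary $1$-complex $\Gamma$ is considerably more thorough than the paper's terse sketch, which essentially records only the observation $\operatorname{ex}(S)\cap s(I)=\emptyset$ and declares the rest clear; one small point worth making explicit is that $a,a'\in s(I)$ forces both to have degree~$\leq 1$ in $\Gamma$ (since neither lies in $\operatorname{ex}(S)$), which is why the common component is a path with $a$ and $a'$ as its endpoints rather than a cycle or a path containing them as interior vertices.
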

\begin{proof}
  The coequaliser exists in the category of diagrams of shape \eqref{eq:graph}
  (i.e~without imposing the injectivity condition).  We just need to check the
  injectivity condition, i.e.~that $I+I' \to A+_S A$ is injective (and
  similarly for $O$).  But this is clear: to say that $e\in S$ maps to an export
  in $A'$ means it is not in the image of $s'$.  Hence the collapse $A+A' \to
  A+_S A'$ does not interfere with the injectivity of $I+I'\to A+A'$.
  Since the levelwise construction is just disjoint union at the level of
  nodes and flags, it is clear that the quotient map is bijective on nodes and 
  etale.
\end{proof}
\begin{cor}
  The category $\GR_{\et}$ of graphs and etale maps admits coequalisers of gluing data.
\end{cor}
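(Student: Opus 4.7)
The plan is to import the coequaliser constructed in $\GR$ by Proposition~\ref{prop:et+bijnodes} and show that it also serves as the coequaliser in the (non-full) subcategory $\GR_\et$. Two things must be checked: first, that the parallel pair $S \rightrightarrows G$ actually lives in $\GR_\et$; second, that the universal map out of the quotient remains etale against test maps chosen from $\GR_\et$.

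For the first point, observe that a shrub has empty node set, so the two pullback squares required for etaleness are trivially pullbacks (both top corners are $\emptyset$). Hence the parallel pair $\operatorname{ex},\operatorname{im}:S \rightrightarrows G$ automatically belongs to $\GR_\et$, and the gluing datum is meaningful in this subcategory.

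For the second point, let $q : G \to Q$ denote the coequaliser constructed in Proposition~\ref{prop:et+bijnodes}; it is etale and bijective on nodes. Suppose $f : G \to H$ is an etale map coequalising the two legs of the gluing datum. The universal property in $\GR$ produces a unique morphism $\overline f : Q \to H$ with $\overline f \circ q = f$. The main (small) step is to verify that $\overline f$ is etale. Given any node $y \in N_Q$, choose the unique $x \in N_G$ with $q(x) = y$, which exists because $q$ is a bijection on nodes. Etaleness of $q$ at $x$ gives bijections $I^G_x \isopil I^Q_y$ and $O^G_x \isopil O^Q_y$, while etaleness of $f$ at $x$ gives bijections $I^G_x \isopil I^H_{f(x)}$ and $O^G_x \isopil O^H_{f(x)}$. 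Composing with the inverses of the first pair yields bijections $I^Q_y \isopil I^H_{\overline f(y)}$ and $O^Q_y \isopil O^H_{\overline f(y)}$, which is precisely the etale condition for $\overline f$.

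The most delicate aspect, if any, is simply keeping the bookkeeping straight: $q$ being bijective on nodes is what allows the etaleness of $f$ to be transported across the quotient node-by-node. Uniqueness of $\overline f$ in $\GR_\et$ is inherited from uniqueness in the larger category $\GR$, since $\GR_\et \to \GR$ is faithful. This establishes the universal property of $q$ as a coequaliser in $\GR_\et$.
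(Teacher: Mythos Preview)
Your proof is correct and is precisely the argument implicit in the paper, which states the corollary without proof as an immediate consequence of Proposition~\ref{prop:et+bijnodes}. The key point---that the quotient map $q$ being etale and bijective on nodes allows the etaleness of any coequalising test map to descend to the induced map out of $Q$---is exactly what makes the passage from $\GR$ to $\GR_{\et}$ automatic, and you have spelled this out carefully.
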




The colimit of a gluing datum $S\rightrightarrows G$
is constructed by connecting exports to imports in $G$, realising one connection
for each unit graph in $S$.  Although it is a trivial observation, it will be
important to note that this colimit can be computed in steps by realising the
connections one by one in any order.

\begin{blanko}{Elementary graphs.}
    An {\em elementary graph} is a connected graph without inner 
    edges.  Up to isomorphism there are only the following: the 
    unit graph (one edge, no nodes), and the $(m,n)$-corollas.
    Let $\elGr\subset \Gr_\et$ denote the full subcategory spanned
    by the elementary graphs (and all etale maps).
    Hence the only maps are the inclusions
    of an edge into a corolla, and the permutations of imports or
    exports.
\end{blanko}

\begin{blanko}{Elements.}\label{elements}
  Let $X$ be a graph
  $
    \xymatrix{
    A & \ar[l]_s I \ar[r]^p & N & \ar[l]_q O \ar[r]^t & A .
    }
  $
  The comma category $\el(X) := \elGr\comma X$ 
  is called the {\em category of elements} of $X$.
  (See \cite{MacLane:categories}, Ch.II, \S 6, for the notion of comma 
  category.)
  It has the following explicit description. Its object
  set is $A+N$.  Its set of non-identity arrows is $I+O$.  An element $f\in I$
  has domain $s(f)$ and codomain $p(f)$; an element $g\in O$ has domain $t(g)$
  and codomain $q(g)$.  Since every arrow goes from an object in $A$ to an
  object in $N$, the category is really just a bipartite
  graph (with identity arrows added), namely the barycentric subdivision of $X$.
  
  The category of elements $\el(X)$ indexes canonically a diagram of 
  elementary graphs, $\el(X) \to \GR$, and $X$ is the colimit
  of this diagram in $\GR$:
  \begin{equation}\label{colim}
    X = \colim_{E\in \el(X)} E = \colim \big( \el(X) \to \GR\big).
  \end{equation}
  
  Assuming that $X$ has no unit components, the outer edges play no role in the
  colimit: it can be computed equally well indexed over $\el(X^\bullet)\subset 
  \el(X)$
  (although of course the ingredient elementary graphs are those of $X$, not
  those of $X^\bullet$).  The colimit indexed over $\el(X^\bullet)$ is that of
  a gluing datum: it is the coequaliser of the canonical cover 
  (the disjoint union of all
  the nodes in $X$ considered as corollas, cf.~\ref{cancov}) over the inner 
  edges:
  $$
  O \times_A I \rightrightarrows \cancov(X) \to X .
  $$ 
  More generally, we have:
\end{blanko}
\begin{lemma}
  For $X$ a graph without unit components, the functor $\el(X^\bullet) \to
  \el(X)$ is final.
\end{lemma}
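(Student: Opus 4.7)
The plan is to apply the classical criterion (\cite{MacLane:categories}, IX.3): the inclusion $\iota:\el(X^\bullet)\to\el(X)$ is final if and only if, for every object $d\in\el(X)$, the comma category $d\downarrow\iota$ is non-empty and connected. Using the bipartite description of $\el(X)$ recalled in \ref{elements}---objects $A+N$, non-identity arrows $I+O$ going from edges to nodes---I would case-split on $d$ according to whether it is a node, an inner edge, or a port.

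If $d=x\in N$, then $x$ already lies in $\el(X^\bullet)$ and $\id_x$ is the only arrow out of $x$ in $\el(X)$; the comma is therefore a singleton. If $d=a\in O\times_A I$ is an inner edge, then $a\in\el(X^\bullet)$, and $(a,\id_a)$ is initial in $d\downarrow\iota$: the only other possible objects are $(p(f),f)$ and $(q(g),g)$ for the unique in-flag $f$ and out-flag $g$ at $a$ (unique by injectivity of $s$ and $t$), and both are reached from $(a,\id_a)$ by the morphisms $f$, $g$ which already live in $\el(X^\bullet)$. Finally, if $d=a$ is a port, the hypothesis on unit components enters: being a port but not a unit component, $a$ lies in exactly one of $s(I)$ and $t(O)$; in either case there is a unique flag out of $a$ in $\el(X)$, whose target is a node and thus an object of $\el(X^\bullet)$, so the comma is again a singleton.

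The argument is essentially bookkeeping; the only delicate point---and the place that motivates the hypothesis---is the port case, since a unit-component edge (simultaneously import and export) would admit no morphism out of itself in $\el(X)$ while also failing to lie in $\el(X^\bullet)$, making $d\downarrow\iota$ empty and defeating finality.
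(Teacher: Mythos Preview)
Your proof is correct. The paper actually states this lemma without proof, so there is nothing to compare against; your argument via the comma-category criterion from \cite{MacLane:categories}, IX.3 is exactly the intended verification, and your case analysis (node, inner edge, port) together with the observation that the no-unit-components hypothesis is precisely what prevents the port case from yielding an empty comma is spot on.
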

\begin{cor}
  If $X \to Y$ is a core equivalence between graphs without unit components, 
  then $\el(X) \to \el(Y)$ is final.
\end{cor}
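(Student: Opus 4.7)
The plan is to deduce this corollary from the preceding lemma via functoriality of $\el$ and a 2-out-of-3 property for final functors.

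First I would note that $\el(-) = \elGr \comma (-)$ is functorial on $\GR$: any morphism $g : X \to Y$ induces a functor $\el(g) : \el(X) \to \el(Y)$ by postcomposition. Applied to the given core equivalence $f : X \to Y$ and its core $f^\bullet : X^\bullet \to Y^\bullet$ (which is invertible by definition of core equivalence), this produces a commutative square
$$
\xymatrix{
\el(X^\bullet) \ar[r] \ar[d]_{\el(f^\bullet)}^{\simeq} & \el(X) \ar[d]^{\el(f)} \\
\el(Y^\bullet) \ar[r] & \el(Y)
}
$$
in which both horizontal arrows are final by the preceding lemma (applicable since neither $X$ nor $Y$ has unit components) and the left vertical is an equivalence of categories.

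Next I would observe that the composite along the top-right of the square, $\el(X^\bullet) \to \el(X) \to \el(Y)$, equals the composite along the bottom-left, $\el(X^\bullet) \to \el(Y^\bullet) \to \el(Y)$; the latter is a composite of an equivalence with a final functor and is therefore final. Hence the composite $\el(f) \circ (\el(X^\bullet) \to \el(X))$ is final, while its first factor $\el(X^\bullet) \to \el(X)$ is final by the lemma.

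It then remains to invoke the 2-out-of-3 property for finality: if $F : A \to B$ and $GF : A \to C$ are both final, then so is $G : B \to C$. This is immediate from the defining characterisation, since for any diagram $D : C \to \mathcal{E}$ one has $\colim_C D = \colim_A DGF = \colim_B DG$, the first equality by finality of $GF$ and the second by finality of $F$. Applied with $F = (\el(X^\bullet) \to \el(X))$ and $G = \el(f)$, this concludes the proof. No step appears to be a genuine obstacle; the only thing worth being careful about is the direction of the 2-out-of-3 implication, since finality is not in general closed under left cancellation without the hypothesis that the first factor is itself final.
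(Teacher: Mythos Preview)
Your argument is correct and is precisely the natural deduction the paper leaves implicit: the corollary is stated without proof, and the intended reasoning is exactly the square you draw together with the cancellation property of final functors (if $F$ and $GF$ are final then so is $G$), which you justify correctly. There is nothing to add.
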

See~\cite{MacLane:categories}, Ch.IX, \S 3, for the  notion of final functor.
What this amounts to is that if $f:\el(X) \to \mathscr C$ is a functor, 
and if $\el(X^\bullet) \to\el(X)  \to \mathscr C$ admits a 
colimit, then so does  $f$, and the two colimits agree.

\begin{blanko}{Gluing datum from a graph of graphs.}\label{graphofgraphs}
  The case of interest in the previous discussion is the following.  A functor
  $\el(X) \to \GR$ is called a {\em graph of graphs} if it sends all $A$-objects
  to unit graphs, and all $I$-maps to import-preserving maps and all $O$-maps to
  export-preserving maps.  In this case (unless $X$ is has a unit component) the
  restriction to $\el(X^\bullet)$ is a gluing datum, and hence has a colimit.
  Therefore $\el(X) \to \GR$ has a colimit.  This is just the formal expression
  of the idea that if a graph $X$ is decorated with graphs at the nodes, then
  the decorating graphs (one for each node) can be glued together as prescribed
  by the incidence relations in the indexing graph.  (In the literature, this
  situation (subject to a further compatibility condition, see \ref{rcgg}) 
  is often referred to as {\em graph substitution} (see for
  example~\cite{Batanin-Berger:1305.0086} or \cite{Yau-Johnson}): the colimit is
  interpreted as the result of substituting each decorating graph into the
  corresponding node of the indexing graph.)
\end{blanko}

\begin{blanko}{Colimit formula for etale hull.}\label{hull=colim}
  If $H \to G$ is a subgraph, then the etale hull (\ref{etalehull}) is
  given by
  $$
  \colim \big( \el(H) \to \el(G) \to \GR \big).
  $$
  This just says that the etale hull is obtained by gluing together corollas 
  from $G$ according to the shape of $H$.
\end{blanko}

\begin{blanko}{Residue.}
  Denote by $\Cor$ the full subcategory of $\Gr_{\et}$ consisting of the 
  corollas.  Clearly $\Cor$ is a groupoid.
  The {\em residue} of a connected graph $G$ is the corolla having the same 
  imports and 
  exports as $G$. This defines a functor $\res : \Gr_{\iso} \to \Cor$
  (functorial in isomorphisms, not in general maps).  Note that $\res(U) = 
  C^1_1$.  If $x$ is a node of a graph, we shall also write $\res(x)$ for
  the residue of the canonical neighbourhood of $x$.
\end{blanko}

\begin{blanko}{Indexing graph for a gluing datum.}
  A coequaliser of a gluing datum $S \rightrightarrows G$
  can be interpreted as a colimit of connected 
  graphs as follows.  Write $S$ and $G$ as sums of connected components
  $$
  \sum_i U_i \rightrightarrows \sum_k G_k .
  $$
  Form the graph $\sum R_k$ by replacing
  each $G_k$ with its residue $R_k=\res(G_k)$.
  We still have the maps from the $S$ edges into these corollas, and we can
  take the colimit $R$, which we call the {\em indexing graph}.
  The original diagram is now over $\el(R^\bullet)$, 
  the category of elements of $R^\bullet$.
\end{blanko}

\begin{lemma}\label{connQconnR}
  The coequaliser $Q$ is connected if and only if the indexing graph $R$ is
  connected.  Furthermore, in this situation, $\res(Q) \simeq \res (R)$.
\end{lemma}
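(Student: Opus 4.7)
The plan is to decompose everything into connected components of $G$ and track how the gluing connects them. Write $G = \sum_k G_k$ and $S = \sum_i U_i$ into connected components, and for each $i$ let $a_i := \operatorname{ex}(U_i)$ (an export of $G$) and $b_i := \operatorname{im}(U_i)$ (an import of $G$); let $k(i)$ and $k'(i)$ index the components of $G$ containing $a_i$ and $b_i$. Form the auxiliary combinatorial (undirected) graph $\Gamma$ with vertex set $\{k\}$ and one edge $\{k(i),k'(i)\}$ for each $i$.

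For the first claim, I would argue that, since each $G_k$ is connected, $Q$ is connected if and only if $\Gamma$ is, directly from the criterion $\Hom(Q, W_1\!+\!W_1) = 2$: any map $Q \to W_1\!+\!W_1$ is constant on each $G_k$ by connectedness of $G_k$, and must agree at every identified pair $a_i, b_i$, so it factors as a function $V(\Gamma) \to 2$ that is forced to be constant along each edge of $\Gamma$. The very same $\Gamma$ governs the connectedness of $R$: by construction $R$ is the coequaliser of the gluing datum $\sum_i U_i \rightrightarrows \sum_k R_k$ induced from the original, where each $R_k = \res(G_k)$ is connected, and the component indices $k(i), k'(i)$ are unchanged.

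For the residue identification, assume $Q$ (equivalently $R$) is connected. By Proposition~\ref{prop:et+bijnodes} the coequaliser $G \to Q$ is bijective on nodes, and the flag sets $I, O$ are unchanged; only the edge set is quotiented, identifying $a_i$ with $b_i$ for each $i$. Since the two gluing injections have disjoint images (one lands in exports, the other in imports), the exports of $Q$ are precisely the exports of $G$ other than the $a_i$'s, and the imports of $Q$ are the imports of $G$ other than the $b_i$'s. The identical analysis applied to $\sum_i U_i \rightrightarrows \sum_k R_k$, whose total edge set is the set of ports of $G$ with the same distinguished $a_i, b_i$, yields the same sets of imports and exports for $R$. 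Two corollas with matching numbers of imports and exports are canonically isomorphic, giving $\res(Q) \simeq \res(R)$.

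The main subtlety is really just the bookkeeping in the first step; formalising the bipartite-graph-of-components intuition via the $\Hom(-, W_1\!+\!W_1)$ definition of connectedness seems the cleanest route, whereas arguing directly via non-decomposability into sums would be more painful. The residue part is essentially automatic once one has checked that the set-level identifications match up on both sides.
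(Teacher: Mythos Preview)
Your proof is correct and takes essentially the same approach as the paper: both use the coequaliser universal property together with the $\Hom(-, W_1+W_1)$ criterion, reducing to constancy on each connected component $G_k$ (resp.\ $R_k$). The paper compares the two cocone sets directly rather than introducing your auxiliary graph $\Gamma$, and its residue argument is the same bookkeeping of which ports are spent in the gluing.
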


\begin{proof}
  To give a map $Q \to W_1+W_1$ is the same as giving a cocone
  $S \rightrightarrows \sum G_k 
  \to W_1+W_1$.  Since each of the $G_k$ is connected, each map $G_k \to 
  W_1+W_1$ is constant, and hence amounts to giving a cocone
  $S \rightrightarrows \sum R_k \to W_1+W_1$, and hence
  a map $R \to W_1+W_1$.  Hence $Q$ 
  is connected if and only if $R$ is.  The second statement follows 
  easily since by construction $\sum G_k$ and $\sum R_k$ have the same set of imports,
  of which the same subset $S$ is spent with gluing, so that also $Q$ and $R$ 
  are left with the same set of imports.  Ditto with exports.
\end{proof}

\begin{lemma}
  If all the individual graphs $G_k$ are acyclic, and if the indexing
graph $R$ is acyclic then the coequaliser $Q$ is acyclic too.
\end{lemma}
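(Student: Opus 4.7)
The plan is to assume a cycle $\phi \colon W_n \to Q$ with $n \geq 1$ and derive a contradiction by producing either a cycle in some $G_k$ or a cycle in the indexing graph $R$. I would first record some preliminary bookkeeping: the coequaliser $Q$ is built from $\sum G_k$ by identifying, for each $u \in S$, the export $\operatorname{ex}(u) \in A_{G_k}$ with the import $\operatorname{im}(u) \in A_{G_{k'}}$, and this identification touches only port edges. Hence the natural maps $N_{G_k} \to N_Q$, $I_{G_k} \to I_Q$, $O_{G_k} \to O_Q$ are injective, and inner edges of $G_k$ inject into $A_Q$ without being identified with any other edge. Consequently, any edge of $Q$ admitting both a source and a target node (hence potentially appearing in a closed directed walk) is either (a) an inner edge of a unique $G_k$, with both endpoints in $N_{G_k}$, or (b) a gluing edge, coming from a unique $u \in S$, with source at the node bearing $\operatorname{ex}(u)$ and target at the node bearing $\operatorname{im}(u)$.

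Next, I would follow the cycle $\phi$, writing its nodes as $x_0, \dots, x_{n-1}$ (with $x_i \in N_{G_{k_i}}$) and its edges as $y_0, \dots, y_{n-1}$ ($y_i$ directed from $x_i$ to $x_{i+1}$, indices mod $n$). An inner edge traversal forces $k_{i+1} = k_i$, while a gluing edge traversal replaces $k_i$ by the import-side index $k_{i+1}$. In Case A (no $y_i$ is a gluing edge), every $k_i$ coincides with a common $k$, and the injectivity noted above lifts $\phi$ uniquely to a morphism $W_n \to G_k$, contradicting acyclicity of $G_k$. In Case B (some $y_i$ is a gluing edge), list the gluing edges in cyclic order; each one contributes a directed edge $r_j$ of $R$, namely the corresponding gluing edge in $\sum R_k$, directed from the $R_{k_i}$-node to the $R_{k_{i+1}}$-node. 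Between consecutive gluings only inner edges are traversed, so the indices match up and the $r_j$ compose head-to-tail into a closed directed walk of length $m \geq 1$ in $R$. This closed walk is precisely a morphism $W_m \to R$, contradicting acyclicity of $R$.

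The main obstacle is the bookkeeping in Case B: one must verify carefully that the export-side and import-side of a gluing in $Q$ translate to the correct source and target of the associated edge in $R$, and that the special case where a gluing attaches an export of $G_k$ to an import of the same $G_k$ still fits the framework --- it simply produces a self-loop at the $R_k$-node, giving $W_1 \to R$, equally forbidden by acyclicity of $R$. Both hypotheses of the lemma thus play a role, acyclicity of the pieces ruling out Case A and acyclicity of the indexing graph ruling out Case B.
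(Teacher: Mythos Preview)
Your proof is correct and expresses the same idea as the paper's, but via a different presentation. The paper argues categorically: given a wheel $W\to Q$, it pulls back along each $G_k\to Q$ to obtain $H_k$; acyclicity of $G_k$ forces each $H_k$ to be a disjoint union of linear graphs (this absorbs your Case~A, since a wheel sitting entirely inside one $G_k$ would make $H_k$ non-linear). Taking the residue of each linear component yields an $L_1$ mapping to $R_k$, and these $L_1$'s glue along the same shrub $S$ to produce a wheel in $R$.

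Your argument tracks the wheel edge-by-edge instead: your ``segments between consecutive gluing edges'' are exactly the connected components of the $H_k$, and your passage from each segment to a single $R_k$-node is the residue step. The paper's version is slicker in that it avoids the case split and the explicit bookkeeping about which edges can be identified in the coequaliser; your version is more elementary and makes the role of each hypothesis more visible. Either way the content is the same.
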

    
\begin{proof}
  Let $W \to Q$ be a wheel in $Q$.  For each of the connected graphs $G_k$,
  consider the pullback
  $$\xymatrix{
     H_k \drpullback \ar[r]\ar[d] & G_k \ar[d] \\
     W \ar[r] & Q
  }$$
  Since we have assumed $G_k$ acyclic, each $H_k$ is a sum of linear graphs, 
  and each $H_k \to G_k$ preserves imports and exports.  This means
  that it makes sense to take residue of
  each $H_k \to G_k$, yielding in each case a map $T_k \to R_k = 
  \operatorname{res}(G_k)$,
  where each $T_k$ is a sum of $L_1$-graphs.
  These linear graphs
  $L_1$ glue together to give a wheel in $R$.  (It is closed because each
  import in one string corresponds to an export in another string.)
\end{proof}

\begin{lemma}
  If the indexing graph $R$ is loopfree, then each of the maps $G_k \to Q$ is an
  open inclusion.
\end{lemma}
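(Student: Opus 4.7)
The plan is to verify the two conditions defining an open inclusion --- levelwise injectivity and etaleness --- working from the coequaliser description of $Q$. First, by Proposition~\ref{prop:et+bijnodes} the quotient map $G \to Q$ is etale and bijective on nodes; combined with etaleness this forces also bijections $I_G \isopil I_Q$ and $O_G \isopil O_Q$ (the relevant pullback of a bijection is a bijection). Restricting along the summand inclusion $G_k \hookrightarrow G$, I would immediately deduce that $G_k \to Q$ is etale (the pullback squares restrict to $N_{G_k}\subseteq N_G$) and injective on nodes and on flags.

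The substantive content is injectivity on edges, $A_{G_k} \to A_Q$. I would split the edges of $G_k$ into inner edges and ports. The equivalence relation on $A_G$ defining $A_Q$ is generated by pairs $(\mathrm{ex}(s),\mathrm{im}(s))$ with $s\in S$, and $\mathrm{ex}$ and $\mathrm{im}$ by definition land in ports of $G$; hence no inner edge of $G_k$ is identified with any other edge in $Q$. Thus the only potential collisions are among ports of $G_k$.

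For ports, the ports of $G$ are canonically in bijection with $A_{\sum R_k}$, and under this identification the port-restriction of the equivalence relation computing $A_Q$ coincides with the one computing $A_R = A_{\sum R_k}/{\sim}$, since passing from each $G_k$ to its residue $R_k$ discards only the inner edges, which play no role in any gluing. So the problem reduces to showing that $A_{R_k} \to A_R$ is injective. Since $R_k$ is a corolla, the flag map $I_{R_k}+O_{R_k} \to A_{R_k}$ is a bijection, so injectivity of $A_{R_k} \to A_R$ is equivalent to injectivity of $I_{R_k}+O_{R_k} \to A_R$ --- exactly the loopfreeness of $R$ at the node $R_k$, supplied by hypothesis. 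The main obstacle will be the bookkeeping that justifies the identification of the port-restricted equivalence relation on $A_G$ with the one computing $A_R$; once that is in place everything else is formal.
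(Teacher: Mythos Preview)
The paper states this lemma without proof, so there is no argument to compare against. Your proposal is correct and in fact supplies exactly the kind of proof one would expect here: the etaleness and node/flag injectivity of $G_k \to Q$ are immediate from Proposition~\ref{prop:et+bijnodes} restricted along the summand inclusion, and the only real content is edge-injectivity, which you correctly reduce to loopfreeness of $R$ at the node $R_k$ via the identification of the port-level quotient with $A_R$.

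One small point worth making explicit in the write-up: when you argue that inner edges of $G_k$ are untouched by the equivalence relation, you are implicitly using that an inner edge of the summand $G_k$ is also an inner edge of $G=\sum_k G_k$, and that the generating identifications $\mathrm{ex}(s)\sim\mathrm{im}(s)$ lie entirely among ports of $G$ (by the import/export-preserving hypotheses on the gluing datum). You clearly have this in mind, but spelling it out removes any doubt that transitivity cannot drag an inner edge into a larger class. The ``bookkeeping'' you flag --- identifying the port-restricted relation on $A_G$ with the one computing $A_R$ --- is genuinely straightforward once one observes that $\mathrm{ports}(G)=\sum_k \mathrm{ports}(G_k)=\sum_k A_{R_k}=A_{\sum R_k}$ and that the two parallel maps from $S$ agree under this identification by construction of the indexing graph.
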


\subsection{Complements and convexity}

The material in this subsection will only be needed again in 
Subsection~\ref{sec:wtilGr}.

\begin{blanko}{Naive complement.}
  If $H \to G$ is a subgraph (i.e~levelwise injective), then the {\em naive
  complement} is simply defined by taking complements levelwise.  It is again a
  subgraph.  This notion is not very useful for the present purposes, where the
  emphasis in on etale maps.  The better notion is the following
  adjustment.
\end{blanko}

\begin{blanko}{Etale complement.}
  If $H \to G$ is a subgraph, we define the {\em etale complement}
  to be the etale hull of the naive complement.  
  We denote it
  $\compl H$ or $G\shortsetminus H$.  Precisely, if temporarily $H'$
  denotes the naive complement, then $H' \to G$ is injective and in particular
  locally injective, and we take its core-equivalence/etale factorisation 
  (\ref{prop:coreeq-etale})
      $$\xymatrix{
    H' \ar[rr] \ar[rd]_{\text{core eq.}} && G   . \\
    & \compl H \ar[ru]_{\text{etale}} &
    }$$
    By the colimit formula for etale hull (\ref{hull=colim}), the complement can
    also be described as
    $$
    \compl H = \colim\big( \el(H') \to \el(G) \to \GR \big).
    $$
\end{blanko}
  
\begin{blanko}{Etale complement of open subgraphs.}
  In the special case where $H \to G$ is an connected open subgraph, 
  which contains at least one node,
  then $G\shortsetminus H$ is just the open hull of the remaining nodes in 
  $G$.  In particular, $\compl H$ is again an open subgraph of $G$ (but
  not in general connected).
  The intersection $S := H \cap \compl H$ is a set of edges, namely the ports 
  of $H$ that are not also ports of $G$, and $G$ can be recovered by gluing 
  together $H$ and $\compl H$ along $S$.  Precisely,  
  $G$ is naturally the colimit of 
  the gluing datum
  \begin{equation}\label{SHHG}
  S \rightrightarrows H + \compl H .
  \end{equation}
\end{blanko}
 
\begin{blanko}{Etale complement of an edge.}
  We shall also need the very special case where $H$ consists of a single edge 
  of $G$.
  If $H$ consists of a port of $G$, then $\compl H = G$.
  If $H$ consists of an inner edge $e$ of $G$, then
  $G \shortsetminus e$ is
  the graph obtained from $G$ by cutting that edge.    More formally, 
  just as $G$ is the colimit of the canonical gluing datum
  $$
  O \times_A I \rightrightarrows \cancov G
  $$
  (of all the nodes over all the inner edges), $G\shortsetminus e$ is 
  the colimit of
  $$
    O \times_A I \shortsetminus e \rightrightarrows \cancov G.
  $$
  Hence there is a natural
  map $G\shortsetminus e \to G$ which is bijective on nodes and etale but {\em not} an 
  inclusion: the inverse image of $e$ consists of two edges.
\end{blanko}

\begin{blanko}{Convex open subgraphs.}
  Recall (from \ref{trees}) that $L_k$ denotes the linear graph with $k\geq 0$ nodes.
  A {\em path} in a graph $G$ is a map $L_k \to G$ (not required etale).
  An connected open subgraph $H \subset G$ is called {\em convex} if the 
  inclusion map is right orthogonal to every inclusion of the ports into a linear graph:
  $$\xymatrix @!@-8pt {
   U+U \ar[r] \ar[d] & H \ar[d] \\
   L_k \ar[r] \ar@{..>}[ru]^{\exists!}& G .
  }$$
  Precisely, $U+U\to L_k$ is import-preserving on the first summand and 
  export-preserving on the second summand.  (The horizontal maps and the diagonal 
  filler are not required to be etale.)
\end{blanko}

\begin{lemma}\label{convcomp}
  The composite of two convex inclusions is again convex.  The identity map of 
  a connected graph is a convex inclusion.  Hence graphs and convex inclusions
  form a subcategory of $\Gr$.
\end{lemma}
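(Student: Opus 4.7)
The plan is to verify the two claims separately, then conclude that convex inclusions form a subcategory.

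First I would dispatch the identity case. If $G$ is connected, then $\id_G$ is trivially a levelwise injective etale map (both squares in the definition of etale are identity squares, so pullbacks), i.e.\ a connected open subgraph inclusion. For the lifting property, given any commutative square
\[
\xymatrix @!@-8pt {
 U+U \ar[r]^{\varphi} \ar[d] & G \ar@{=}[d] \\
 L_k \ar[r]_{\psi} & G,
}
\]
the map $\psi : L_k \to G$ itself is a diagonal filler, and it is the unique filler because the bottom triangle forces the filler to equal $\psi$.

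Next I would prove the composition statement. Suppose $H \xrightarrow{i} K \xrightarrow{j} G$ are convex inclusions. The composite $j\circ i$ is an open inclusion because both classes (injective maps and etale maps) are closed under composition (injective pullback squares paste to give pullback squares). Since $H$ is connected by hypothesis, $j\circ i$ is an inclusion of a connected open subgraph. For the lifting property, consider a square
\[
\xymatrix @!@-8pt {
 U+U \ar[r]^{\varphi} \ar[d] & H \ar[d]^{j\circ i} \\
 L_k \ar[r]_{\psi} \ar@{..>}[ru] & G.
}
\]
Post-composing $\varphi$ with $i$ gives a square with right edge $j : K \to G$, and by convexity of $j$ this produces a unique filler $\chi : L_k \to K$ with $j\circ\chi = \psi$ and $\chi \circ (U+U \to L_k) = i\circ\varphi$. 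Now consider the square
\[
\xymatrix @!@-8pt {
 U+U \ar[r]^{\varphi} \ar[d] & H \ar[d]^{i} \\
 L_k \ar[r]_{\chi} \ar@{..>}[ru] & K.
}
\]
By convexity of $i$, there is a unique filler $\eta : L_k \to H$ with $i\circ\eta = \chi$ and $\eta\circ(U+U\to L_k) = \varphi$. Then $(j\circ i)\circ \eta = j\circ\chi = \psi$, so $\eta$ fills the original square. Uniqueness of $\eta$ follows in two steps: any filler $\eta'$ for the original square gives $i\circ\eta'$ a filler for the first auxiliary square, forcing $i\circ\eta' = \chi$ by uniqueness for $j$; then $\eta'$ is a filler for the second square, forcing $\eta' = \eta$ by uniqueness for $i$.

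The final sentence is immediate: identities are convex and convex inclusions compose, so convex inclusions form a subcategory of $\Gr$. I do not foresee any real obstacle; the argument is a standard two-step diagram chase, with the only thing to check carefully being that the intermediate filler $\chi$ genuinely agrees on the ports $U+U$ with the image of $\varphi$ inside $K$, which is forced by the uniqueness part of convexity of $j$.
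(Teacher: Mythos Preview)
Your proof is correct and is essentially the same approach as the paper's, just spelled out in full detail. The paper's proof is a single sentence: ``This is clear since convex inclusions form a right orthogonal class,'' invoking the general fact that any class of maps defined by a right lifting property is closed under composition and contains identities; your two-step diagram chase is exactly the standard verification of that fact in this particular instance.
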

\begin{proof}
  This is clear since convex inclusions form a right orthogonal class.
\end{proof}

\begin{blanko}{Edge poset.}
  If $X$ is a graph, then
  $$
  I \times_N O \to A \times A
  $$
  is a relation on $A$, denoted $ \lessdot $: we have $x \lessdot y$ iff
  there is a
  node with $x$ as incoming edge and $y$ as outgoing edge.  An edge $x$ is a
  loop iff $x \lessdot x$, so that $X$ is loopfree iff $\lessdot$ is
  anti-reflexive.  The graph is acyclic iff the transitive closure of $\lessdot$
  is anti-symmetric and anti-reflexive.
  
  Assuming that $X$ is acyclic, the transitive and
  reflexive closure of $\lessdot$, denoted $\leq$,
  is a poset called the {\em edge poset} of $X$.
  We have $x \leq y$ iff there is a path from $x$ to $y$  in the graph.
  Hence a connected open subgraph $H \subset X$ is convex iff the edge poset of
  $H$ is a convex subset of the edge poset of $X$, in the usual sense
  ($x,y\in H \;\&\; x\leq m \leq y \Rightarrow m \in H$).  
\end{blanko}

\begin{blanko}{Remark.}
  The notion of convexity makes sense of course also for subgraphs
  not required to be open or not required to be connected,
  but then a looser notion of path is needed to detect convexity:
  specifically, `linear' graphs starting or ending in a node instead of
  an edge should be allowed
  
  Conversely, it is also possible to formulate the notion of convexity
  entirely inside the category of etale maps.  In this case the notion
  of path must be replaced by the notion of strip: a {\em strip} is the etale
  hull of a path.  In the lifting diagram, instead of just linear graphs $L_k$
  it is necessary to use all the graphs $P$ such that $P^\bullet \simeq 
  L_k^{\bullet}$.  Indeed, these are precisely the graphs that can appear
  in the core-equivalence/etale factorisation of a path, as in 
  \ref{prop:coreeq-etale}.
%
\end{blanko}

\begin{blanko}{Convexity in terms of etale complements.}
  Let $H$ be a connected open subgraph of an acyclic graph $G$.  If $H$ is a
  single edge, clearly $H$ is convex.  So we assume now that $H$ contains at
  least one node.  The intersection  $S:=H \cap\compl H$
  is a disjoint union of edges.
  Convexity says that there are no paths starting and ending in $H$ that go
  through a node of $\compl H$.  Such an offending path must necessarily go
  through an edge in $S$ (we must leave $H$ somewhere), then through at least
  one node in $\compl H$, and later through another edge in $S$ (we must enter $H$
  again somewhere).  So equivalently we can say that there is no path in $\compl
  H$ between two distinct edges of $S$.
  In particular, we see that convexity of $H$ does not really depend on what is
  inside $H$, but only on its boundary and its relationship with the complement.  
\end{blanko}

\begin{blanko}{Terminology from Hackney-Robertson-Yau~\cite{Hackney-Robertson-Yau}.}
  Two important notions in \cite{Hackney-Robertson-Yau} can be expressed in
  terms of convexity: Two nodes in a connected graph are {\em closest
  neighbours} \cite{Yau-Johnson}, when the open hull of the two nodes 
  is (connected and) convex.  A
  node of a connected graph is {\em almost isolated} if the complement is
  (connected and) convex, or if it is the only node in the graph.  (The reader
  is referred to \cite{Hackney-Robertson-Yau} to see how these notions are
  defined there (without the notion of convexity), and how they are exploited,
  among other things, to arrive at the notion of subgraph used there (which in
  the present terminology is the notion of convex open subgraph).)
\end{blanko}

\section{Properads}

\label{sec:properads}

This section owes a lot to Joyal-Kock~\cite{Joyal-Kock:0908.2675}.

\bigskip

Denote by $\Gr$ the category of connected acyclic graphs
with etale maps.  From now on we say simply {\em graph}
for connected acyclic graph.

\subsection{Digraphical species (coloured bi-collections) and $F$-graphs}

\begin{blanko}{Digraphical species = coloured bi-collections.}
  A presheaf 
  \begin{eqnarray*}
    F:\elGr\op & \longrightarrow & \Set  \\
    E & \longmapsto & F[E]
  \end{eqnarray*}
  is called a {\em digraphical species} or a {\em (coloured) bi-collection}.
  So it is the
  data of a 'set of colours' $F[U]$, the value on the unit graph, and for each $(m,n)$, a
  set $F[m,n]$ of operations of biarity $(m,n)$.  Each input and output slot in
  an operation is labelled by a colour, via the maps in $\elGr$ from the unit
  graph to the corollas.
  
  We favour the terminology {\em digraphical species} when $F$ conveys the idea
  of a structure on digraphs, something to decorate digraphs with, while we 
  prefer the terminology {\em bi-collection} when $F$ serves as the structure
  underlying (or freely generating) a properad.
    
    A graph $X$ defines a bi-collection  by
    \begin{eqnarray*}
      X: \elGr\op & \longrightarrow & \Set  \\
      E & \longmapsto & \Hom^\et(E,X).
    \end{eqnarray*}
    The category of elements $\el(X)$ introduced in \ref{elements} then 
    coincides
    with the category of elements of this presheaf (see 
    \cite{MacLane:categories}, Ch.III, \S 7).
\end{blanko}

\begin{blanko}{Grothendieck topology and sheaves.}
  The category $\Gr$ has a natural Grothendieck topology in which a cover is a
  collection of etale maps that are jointly surjective on nodes and on edges.
  Every graph has a canonical cover $\cancov(X) \to X$ which is a disjoint union
  of elementary graphs, cf.~\ref{cancov}.
  From \eqref{colim} we get
  $$
  \PrSh(\elGr) \simeq \Sh(\Gr) .
  $$

Via this equivalence, a digraphical species $F$
can be evaluated on any graph, not just on the elementary
ones.  The formula is:
$$
F[G] = \lim_{E\in \el(G)} F[E] .
$$
(We already know that $G$ is a colimit of its canonical diagram,
in the category $\Gr$ of graphs and etale maps.
That $F$ is a sheaf on $\Gr$ means precisely that this colimit is 
sent to a limit, which is the one in the formula.)
\end{blanko}

\begin{lemma}\label{lem:F[G]}
  In the special case where the presheaf $F: \elGr\op\to\Set$ is `represented' 
  by a graph $X$, that is, $F[E] = \Hom^\et(E,X)$, then as a sheaf 
  $F:\Gr\op\to\Set$ it is genuinely represented by $X$:
$$
F[G] = \Hom^\et(G,X) .
$$
\end{lemma}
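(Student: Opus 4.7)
The plan is to reduce the statement to the colimit formula $G = \colim_{E \in \el(G)} E$ of \eqref{colim} by applying the contravariant functor $\Hom^\et(-,X)$ and comparing with the sheaf value formula $F[G] = \lim_{E \in \el(G)} F[E]$ recalled just above the lemma.

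Substituting $F[E] = \Hom^\et(E, X)$ into the sheaf formula yields
\[
F[G] = \lim_{E \in \el(G)} \Hom^\et(E, X),
\]
so the task reduces to identifying the right-hand side with $\Hom^\et(G, X)$. By the universal property of the colimit \eqref{colim} (in $\GR$), compatible families $\{f_E: E \to X\}_{E \in \el(G)}$ of morphisms in $\GR$ correspond bijectively to morphisms $f: G \to X$ in $\GR$. Restricting both sides to the subsets of etale maps, it suffices to show that $f$ is etale if and only if every $f_E$ is etale.

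One direction is formal: pulling back preserves pullback squares, so the restriction of an etale map to an elementary subgraph is etale. For the converse, recall that etaleness of $f$ is a node-local condition: for each $x \in N$ one needs the bijections $I_x \isopil I_{\varphi(x)}$ and $O_x \isopil O_{\varphi(x)}$. But each node of $G$ is the unique node of a corolla $C_x$ appearing in the canonical cover $\cancov(G) \to G$, and $C_x$ lies in $\el(G)$; the statement that the composite $C_x \to G \to X$ is etale is precisely the bijection condition at $x$. The remaining objects of $\el(G)$ are unit graphs, for which etaleness is automatic. Hence etaleness of all the $f_E$ forces etaleness of the assembled $f$, yielding the identification $\lim_{E \in \el(G)} \Hom^\et(E, X) = \Hom^\et(G, X)$.

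I do not foresee a genuine obstacle: the mild point is only the local-to-global reading of etaleness via corollas, which is immediate from its defining node-wise pullback squares. Conceptually the lemma simply says that a representable presheaf on $\elGr$ extends, under the equivalence $\PrSh(\elGr) \simeq \Sh(\Gr)$, to the representable sheaf on $\Gr$.
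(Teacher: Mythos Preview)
Your proof is correct and follows the same route as the paper's one-line argument
\[
F[G] = \lim_{E\in \el(G)} F[E] = \lim_{E\in \el(G)} \Hom^\et(E,X) = \Hom^\et(\colim E, X) = \Hom^\et(G,X).
\]
You have simply unpacked the third equality, verifying explicitly that $\Hom^\et(-,X)$ takes the colimit \eqref{colim} to a limit by checking that etaleness is a node-local condition detected on the corollas of $\el(G)$; the paper leaves this step implicit.
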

\begin{proof}
$$ 
F[G] = \!\lim_{E\in \el(G)} F[E] = \!\lim_{E\in \el(G)} \Hom^\et(E,X) = 
\Hom^\et (\colim E, X) =
\Hom^\et(G,X) .\vspace{-12pt} 
$$  
\end{proof}

\begin{blanko}{$F$-graphs.}
  Every digraphical species $F$ defines a notion of {\em $F$-graph}.  They are
  graphs whose edges are decorated by the colours of $F$, and whose nodes are
  decorated by the operations of $F$, subject to obvious compatibility
  conditions.  Formally, the category of $F$-graphs is the comma category
  $\Gr\comma F$ (or $\GR \comma F$ if we allow non-connected graphs).  If for a
  moment we denote by $T$ the terminal digraphical species, then $T$-graphs are
  the same thing as graphs, in the following discussion called {\em naked
  graphs}.
  
  All the basic results about graphs hold also for $F$-graphs.  Coequalisers of
  gluing data exist for $F$-graphs, just as for graphs, now over $F$-shrubs,
  but the indexing graph is a naked graph, not an $F$-graph: the residue of an
  $F$-graph is not naturally an $F$-graph, only a naked graph.  (For example,
  $F$ could be the $(2,1)$-species, whose $F$-graphs are binary trees.  The
  residue of a binary tree is of course not in general a binary tree.)
  Similarly, a graph of $F$-graphs $\el(R) \to \GR\comma F$ admits a colimit
  in $\GR\comma F$ (obtained by gluing together all the $F$-graphs according
  to the incidence relations expressed by the naked graph $R$).
  
  (Again we see that the indexing graph is not on the same footing as the
  ingredients of the colimit.  There is no natural notion of substituting an
  $F$-graph into the node of a naked graph.  What does make sense is to use the
  naked graph as a shape of colimit to compute in the category $\GR\comma F$.
  It's a recipe for gluing.  Hence in the present formalism, gluing is
  the fundamental notion, while
  substitution is derived from it.)
\end{blanko}

\subsection{The free-properad monad}

\label{sec:freeproperad}

We shall define properads as algebras for a certain monad.  This properad monad
was also described by Hackney-Robertson-Yau~\cite{Hackney-Robertson-Yau} (Ch.2),
and in a more general setting by Yau-Johnson~\cite{Yau-Johnson} (Ch.10--11), in
both cases in terms of graph substitution. 
The present description of the monad
is literally the same as that for coloured modular operads of
Joyal-Kock~\cite{Joyal-Kock:0908.2675}, where in turn it is mentioned that it is
just the coloured version of the construction of
Getzler-Kapranov~\cite{Getzler-Kapranov:9408}.

\begin{blanko}{$(m,n)$-graphs.}
  An {\em $(m,n)$-graph} is a graph $G$
  equipped with an isomorphism $\res(G) \simeq C^m_n$.  More formally, the 
  groupoid of $(m,n)$-graphs $(m,n)\kat{-Gr}_{\iso}$ is the homotopy fibre 
  over $C^m_n$ of the functor $\res: \Gr_{\iso} \to \Cor$.
Note that $\operatorname{res} : \Gr_{\iso} \to \kat{Cor}$ is a groupoid
fibration, not a discrete fibration, since a graph may well have automorphisms
that fix all ports.
We are interested in its fibrewise $\pi_0$: that's the essential part of a
digraphical species, which to $(m,n)$ assigns
$\pi_0((m,n)\kat{-Gr}_{\iso})$.  (It does not say anything about colours,
the values on the unit graph, but this information will be provided
automatically in the construction below.)

If $F$ is a digraphical species,
there is a residue functor $\res:\Gr_{\iso}\comma F \to \Cor$ from $F$-graphs 
to naked corollas.  The groupoid of $(m,n)$-$F$-graphs,
denoted $(m,n)\kat{-Gr}_{\iso} \comma F$, is the homotopy fibre over $C^m_n$
of this functor.
\end{blanko}

\begin{blanko}{Underlying endofunctor of the free-properad monad.}
  Let $\triv$ denote the unit graph.  We define the {\em monad for properads}:
  \begin{align*}
   \PrSh(\elGr) \ \longrightarrow & \ \PrSh(\elGr) \\  
   F \ \longmapsto & \ \ov F ,
  \end{align*}
  where $\ov F$ is the bi-collection given by $\ov F[\triv] :=  
  F[\triv]$ and
  \begin{align*}
  \ov F[m,n] \ := & \ \colim_{G \in (m,n)\kat{-Gr}_{\iso}} F[G] \\[6pt]
  =& \ \sum_{G\in \pi_0((m,n)\kat{-Gr}_{\iso})}  \frac{F[G]}{\Aut_{(m,n)}(G)} \\[6pt]
  =& \ \pi_0 \big((m,n)\kat{-Gr}_{\iso} \comma F \big) .
  \end{align*}
  Here the first equation follows since $(m,n)\kat{-Gr}_{\iso}$ is just a 
  groupoid: the sum is over isomorphism classes of $(m,n)$-graphs, and 
  $\Aut_{(m,n)}(G)$ denotes
  the automorphism group of $G$ in $(m,n)\kat{-Gr}_{\iso}$.
\end{blanko}

\begin{blanko}{Multiplication for the monad.}
  $\ov F[m,n]$ is the set of isomorphism classes of $(m,n)$-$F$-graphs: it is
  the set of ways to decorate $(m,n)$-graphs by the digraphical species $F$.
  Now $\ov { \ov F} [m,n]$ is the set of $(m,n)$-graphs decorated by $F$-graphs:
  this means that each node is decorated by an $F$-graph with matching ports.
  We can use the $(m,n)$-graph as indexing a diagram of $F$-graphs, and then
  take the colimit.  This describes the monad multiplication
  $$
  \mu_F: \ov{\ov F} \to \ov F .
  $$

  More formally, the groupoid $\Gr_{\iso}\comma \ov F$ has as objects pairs
  $(R,\phi)$ where $R$ is a graph, and $\phi: \Hom( -, R) \to \ov F$ is a
  natural transformation.  Equivalently we can regard $\phi$ as a functor
  $\el(R) \to \elGr\comma \ov F$.  Now there is also a canonical functor
  $\elGr\comma \ov F \to \Gr\comma F$, which takes unit graphs to unit graphs,
  and takes a corolla decorated by an $F$-graph to that same $F$-graph.
  The composite functor
$$
\el(R) \to  \elGr\comma \ov F \to \Gr\comma F
$$
is a graph of $F$-graphs in the technical sense of \ref{graphofgraphs},
and we take its colimit to obtain a single $F$-graph.
The whole construction defines a functor
$$
\Gr_{\iso}\comma \ov F \to \Gr_{\iso}\comma F ,
$$
which is compatible with taking residue by Lemma~\ref{connQconnR}.
The fibrewise $\pi_0$ of this functor defines the monad multiplication.
\end{blanko}

\begin{blanko}{Associativity.}
  Associativity asserts that this square commutes:
  $$\xymatrix{
     \ov{\ov{\ov F}}\ar[r]^{\ov {\mu_F}}\ar[d]_{\mu_{\ov F}} & \ov{\ov F} 
     \ar[d]^{\mu_F} \\
     \ov {\ov F} \ar[r]_{\mu_F} & \ov F .
  }$$
  The elements in $\ov {\ov {\ov F}}[m,n]$ are graphs of graphs of $F$-graphs,
  and associativity amounts to saying that these $F$-graphs can be glued
  together in two ways with the same result.  In detail, an element in $\ov {\ov
  {\ov F}}[m,n]$ is a graph of $\ov F$-graphs, so it amounts to a
  graph $R$, and for each node $x$ in $R$ an $\ov F$-graph $A_x$ (and for each
  inner edge in $R$ the corresponding ports of the $A_x$ match).  Each $\ov
  F$-graph $A_x$ is a graph of $F$-graphs, so for each node in each $A_x$ there
  is an $F$-graph (and again compatibilities).  So altogether there is a number
  of $F$-graphs involved;  associativity says that the following two ways of
  gluing them all together give the same result.  Either {\em inner-first}
  (that's $\mu_F \circ \ov{\mu_F}$): we first glue together, for each $A_x$
  separately, the corresponding $F$-graphs, to obtain a set of bigger $F$-graphs
  indexed by the nodes in $R$, and then finally glue together these bigger
  $F$-graphs according to the colimit shaped by $R$.  Or {\em outer-first}
  (that's $\mu_F \circ\mu_{\ov F}$): we first prepare the overall shape by
  gluing together all the graphs $A_x$ according to the shape $R$.  This
  produces a graph $Q$ of $F$-graphs, and then we use $Q$ as recipe for gluing
  all the $F$-graphs.  Note that there is a natural bijection between the nodes
  in $Q$ and the sum of all the nodes in all the $A_x$ --- this follows because
  the quotient map of the gluing construction $S \rightrightarrows \sum_x A_x
  \to Q$ is bijective on nodes (\ref{prop:et+bijnodes}).  To see that the two gluing constructions agree,
  assume first that none of the $A_x$ are unit graphs.  Start with the
  outer-first gluing: here we are simply gluing all the $F$-graphs according to
  one graph $Q$.  However, this graph $Q$ contains as open subgraphs all the $A_x$.
  We can perform the colimit construction by first gluing separately over the
  inner edges of each of the $A_x$ (in each case this is a subset of the inner
  edges in $Q$, and all these subsets are disjoint).  But this first step is precisely to assemble all the
  $F$-graphs according to which $A_x$ they belong to, so it is precisely the
  first step in the inner-first gluing prescription.  Finally we glue along the
  remaining inner edges in $Q$.  By the assumption that none of the $A_x$ are
  unit graphs, these remaining inner edges are precisely identified with the
  inner edges of the outermost graph $R$.  So under this assumption, both ways
  of gluing are over the same sets of edges.  Finally we can easily reduce to
  this situation from the general case: if there is a node $x$ in $R$ such that
  $A_x$ is a unit graph, then we can start the colimit computation (in either
  way) by taking the pushout over any inner edge incident to $x$.  This pushout
  does not affect the result, neither the graph $Q$ in the outer-first
  calculation, nor the gluing of the $\ov F$-graphs $A_x$ in the inner-first
  calculation.  We may therefore as well assume that there are no nodes of this
  type in $R$.
\end{blanko}

\begin{blanko}{Unit for the monad.}
  The unit for the monad is given by interpreting an $F$-corolla $C\in F[m,n]$
  as an $F$-graph.  The unit law says: (1) given an $(m,n)$-$F$-graph $X$,
  interpreting it first as an $(m,n)$-corolla of $F$-graphs (the single
  $F$-graph $X$ itself), and then taking the (trivial) colimit, that gives back
  the $F$-graph $X$ again; and (2), interpreting $X$ as a graph of $F$-corollas,
  and then taking the colimit of these corollas, also gives back the original
  $F$-graph $X$.  Both cases are clear.
\end{blanko}

\begin{blanko}{Properads.}
  A {\em (coloured) properad} is defined to be an algebra for the properad monad
  $F\mapsto \ov F$.  This means that it is a bi-collection $F:\elGr\op\to\Set$
  equipped with a structure map $\ov F \to F$ obeying a few easy axioms
  (cf.~\cite{MacLane:categories}, Ch.~VI): it amounts to a rule which for any
  $(m,n)$-graph $G$ gives a map $F[G] \to F[m,n]$, i.e.~a way of constructing a
  single operation from a whole graph of them.  This rule satisfies some
  associativity conditions, amounting to independence of the different ways of
  breaking the computation into steps.  Let $\kat{Prpd}$ denote the category of
  algebras for the properad monad $F\mapsto \ov F$.
\end{blanko}

\begin{blanko}{Some variations.}
  Polycategories~\cite{Szabo:MR0373846}, also called
  dioperads~\cite{Gan:0201074}, are obtained by using only simply connected
  graphs (and the same elementary graphs).  Operads are obtained by using only
  rooted trees (and then only elementary graphs that are rooted trees),
  cf.~\ref{trees}.  See Kock~\cite{Kock:0807.2874}.  Categories are obtained by using
  only linear graphs (and elementary graphs that are linear).
\end{blanko}

\subsection{Generic/free factorisation and nerve theorem}
\label{sec:generic}

This subsection and the next, not really used elsewhere in the paper, introduce
and study a bigger category of graphs, whose new maps are generated by the
free-properad monad.  One important aspect of this bigger category $\wtil\Gr$ is
the nerve theorem (\ref{thm:nerve}), characterising properads among presheaves
on $\wtil\Gr$ in terms of a Segal condition.  The category $\wtil\Gr$ is also
described by Hackney-Robertson-Yau~\cite{Hackney-Robertson-Yau}, although they
are more interested in a smaller category (see \ref{hry-cat} below).


\begin{blanko}{Kleisli category.}
  We consider the diagram
  \begin{equation}\label{diagramforPhi}
    \xymatrixrowsep{48pt}
    \xymatrixcolsep{72pt}
    \xymatrix @!0
    {&\wtil\Gr \ar[r]^{\text{f.f.}} & \kat{Prpd} \ar@<1ex>[d]^{\text{forgetful}} \ar@{}[d]|{\dashv} \\
  \elGr \ar[r] &\Gr \ar[u]^{\text{i.o.}} \ar[r]_-{a} & \PrSh(\elGr) 
  \ar@<1ex>[u]^{\text{free}}
  }
  \end{equation}
  obtained by factoring $\Gr \to \kat{Prpd}$ as identity-on-objects followed by
  fully faithful.  In other words, $\wtil\Gr$ is the Kleisli category of the
  monad (see \cite{MacLane:categories}, Ch.VI, \S 5), restricted to $\Gr$. 
  This means that a morphism in $\wtil\Gr$ from graph $R$
  to graph $Y$ is defined as a morphism of bi-collections from $R$ to $\ov Y$.
  So where the original maps (those coming from $\Gr$, now called {\em free maps}) send
  vertices to vertices, the general maps in $\wtil\Gr$ send
  vertices to `subgraphs' --- more precisely, a vertex $x$ of $R$ is sent to an etale
  map $G_x\to Y$, in both cases subject to compatibility conditions.
  These conditions say that all the $G_x$ form a residue-compatible 
  graph of graphs (\ref{rcgg}) indexed by $R$, such that
  the colimit $Q$ comes with an etale map to $Y$. 
  In particular, in the bigger
  category $\wtil \Gr$ there is a new kind of map from $R$ to $Q$ which can be described
  as {\em refining} each of the nodes in $R$, as detailed below.
  This map realises the construction of
  `substituting the graphs $G_x$ into the nodes of $R$'. 
  The second step in the general
  map $R \to Y$ is the etale map $Q \to Y$.  Hence we see that
  every map in $\wtil \Gr$
  factors as a refinement followed by an etale map.
  This is an example of generic/free factorisation, an important general 
  phenomenon, and a key ingredient in achieving the nerve theorem below 
  (\ref{thm:nerve}).
\end{blanko}

\begin{blanko}{Residue-compatible graphs of graphs.}\label{rcgg}
  According to \ref{graphofgraphs}, a {\em graph of $F$-graphs} is a functor
  $\gamma:\el(R) \to \Gr\comma F$ that sends all $A$-objects to unit $F$-graphs,
  and all $I$-maps to import-preserving maps and all $O$-maps to
  export-preserving maps.  We say that a graph of $F$-graphs $\gamma$ is {\em
  residue compatible} when for each node $x$ in $R$, we have $\res(x) = \res(
  \gamma(x))$.
\end{blanko}

\begin{blanko}{Refinements.}
  We first treat the case where the domain is a corolla.
  To give a map in $\wtil \Gr$ from $C^m_n$ to a graph $Y$ is to give a map of 
  presheaves $C^m_n \to \ov Y$, i.e.~an element in $\ov Y[m,n]$.  By 
  construction
  this is given by (an isoclass of) a graph $G \in (m,n)\kat{-Gr}_{\iso}$
  together with an element $[\varphi] \in \Hom(G,Y)/\Aut_{(m,n)}(G)$.
  We call such a map a {\em refinement} if $\varphi$ is invertible. 
  It is now clear that we have the following factorisation into a refinement 
  followed by a free map (i.e.~the image of an etale map):
  $$\xymatrix{
     C^m_n \ar[rr]^{(G,[\varphi])}\ar[rd]_{(G,[\id])} && \ov Y . \\
     & \ov G \ar[ru]_{\ov \varphi} & 
  }$$
  This factorisation is not unique, since each $\sigma\in \Aut_{(m,n)}(G)$
  yields a different representative $\varphi \circ \sigma$ for the class
  $[\varphi]$.  But there is clearly a (free) isomorphism between such two
  factorisations, simply given by $\sigma$:
  $$\xymatrix{
  & \ov G \ar[rd]^{\ov{\varphi\sigma}}  \ar[dd]^{\ov\sigma}&\\
     C^m_n \ar[ru]^{(G,[\id])}\ar[rd]_{(G,[\id])} && \ov Y .\\
     & \ov G \ar[ru]_{\ov \varphi} & 
  }$$
  Note that the left-hand triangle commutes because $[\sigma] =[\id]$ modulo 
  $\Aut_{(m,n)}(G)$.
  The same diagram also shows that the factorisation, although it is unique up
  to isomorphism, is not in general unique up to {\em unique} isomorphism: if 
  $\sigma$ is a nontrivial port-preserving deck transformation of
  $\varphi$ (this can only happen when there are no ports), 
  then $\varphi= \varphi\sigma$, and the diagram represents a nontrivial
  automorphism of a factorisation.
  
  The general map in $\wtil\Gr$, say $R \to Y$, is essentially a colimit of maps
  of the previous form.  More formally, it is given by a residue-compatible
  graphs of $Y$-graphs (cf.~\ref{rcgg})
  $$
  \gamma:\el(R) \to \Gr\comma Y .
  $$
  The colimit of $\gamma$ is a graph 
  $Q$
  with an etale map to $Y$.  The map $R \to Y$ is a {\em refinement} if
  this etale map is invertible.  In the general case, $R \to Q \to Y$
  constitutes the refinement/free factorisation.
  In conclusion:
\end{blanko}
\begin{prop}\label{factor}
  Every map in $\wtil\Gr$ factors as a refinement followed by a free map.  This
  factorisation is unique up to (non-unique) free isomorphism.
\end{prop}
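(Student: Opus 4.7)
My plan is to read off the factorisation directly from the presentation of a map in $\wtil\Gr$ as a residue-compatible graph of $Y$-graphs, taking for the middle object the colimit of that diagram. Given $f:R\to Y$ in $\wtil\Gr$, the first step is to unpack it via the preceding discussion as a residue-compatible graph of $Y$-graphs $\gamma:\el(R)\to\Gr\comma Y$, and form its colimit $(Q,e)$ in $\Gr\comma Y$: existence in $\Gr$ follows from \ref{graphofgraphs}, connectedness and acyclicity of $Q$ come from Lemma~\ref{connQconnR} together with the subsequent acyclicity lemma, and the induced $e:Q\to Y$ is etale because the gluing construction is bijective on nodes (\ref{prop:et+bijnodes}) and each $G_x\to Y$ was etale to begin with. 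Thus $e$ represents a free map. Next, observe that $\gamma$ lifts through the colimit cocone to a residue-compatible graph of $Q$-graphs $\gamma_Q:\el(R)\to\Gr\comma Q$ whose colimit is $(Q,\id_Q)$; by definition this data presents a refinement $r:R\to Q$. A quick unpacking of Kleisli composition shows that $e\circ r$ postcomposes each $G_x\to Q$ with $e$, returning $\gamma$ itself, so $e\circ r=f$.

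For uniqueness, suppose $R\xrightarrow{r'}P\xrightarrow{e'}Y$ is another factorisation with $r'$ a refinement and $e'$ free. Then $r'$ is presented by a residue-compatible graph of $P$-graphs $\gamma':\el(R)\to\Gr\comma P$ with colimit $(P,\id_P)$; postcomposing with $e'$ yields a residue-compatible graph of $Y$-graphs representing $e'\circ r'=f$, which therefore agrees node-wise with $\gamma$ up to $Y$-graph isomorphism. Since postcomposition with $e'$ preserves the relevant colimits (they are computed on underlying graphs), the colimit of $e'\circ\gamma'$ in $\Gr\comma Y$ is $(P,e')$; the universal property of $(Q,e)$ then supplies a canonical comparison map $Q\to P$ over $Y$, and a symmetric argument gives an inverse, so the comparison is an isomorphism of $Y$-graphs, which is exactly a free isomorphism between the two factorisations.

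The principal obstacle, and the source of the qualifier ``non-unique'' in the statement, is that the graph-of-$Y$-graphs data $\gamma$ is itself canonical only up to port-preserving deck transformations of the individual decorating graphs $G_x$: such a $\sigma\in\Aut_{(m,n)}(G_x)$ preserves the class $[\varphi]\in\Hom(G_x,Y)/\Aut_{(m,n)}(G_x)$ that defines $\gamma$ at the node, yet produces a genuinely different isomorphism of the colimits $Q\to P$, precisely as in the diagram with $\sigma$ immediately preceding the statement. Hence one can only expect uniqueness of the comparison isomorphism up to the action of these deck transformations, not uniqueness on the nose.
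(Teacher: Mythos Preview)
Your proof is correct and follows essentially the same approach as the paper: the paper's argument is the discussion immediately preceding the proposition (which is stated as ``In conclusion:''), and you reproduce that argument---take the residue-compatible graph of $Y$-graphs $\gamma$, form its colimit $Q$, and read off the factorisation $R\to Q\to Y$. You add a few details the paper leaves implicit (the citations to \ref{connQconnR} and the acyclicity lemma to ensure $Q\in\Gr$, and a universal-property formulation of the uniqueness), while the paper first treats the corolla case as a warm-up before passing to the general colimit; but the substance is the same. One small wording issue: in your uniqueness paragraph the comparison $Q\to P$ is not ``canonical''---it depends on a choice of node-wise isomorphism between the two presenting diagrams---but you correctly identify and explain exactly this non-canonicity in your final paragraph.
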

A version of this factorisation is also obtained in 
\cite{Hackney-Robertson-Yau} (Lemma~5.43).

\begin{BM}
  In the preceding discussion, $Y$ was assumed to be a graph, but it fact this
  is irrelevant: the arguments work exactly the same for $Y$ a general presheaf.
  In any case a map $R \to Y$ in the Kleisli category is given by $\gamma:
  \el(R) \to \Gr\comma Y$, subject to the same conditions as above, and in any
  case the middle object $Q$ appearing in the factorisation $R \to Q \to Y$ is a
  graph.  This will be important in the proof of the nerve theorem.
\end{BM}

\begin{blanko}{Generic maps (cf.~\cite{Weber:TAC13}).}
  The refinement/etale factorisation is an instance of a very general phenomenon,
  that of generic/free factorisations and monads with arities, introduced and
  studied in depth by Weber~\cite{Weber:TAC13}, \cite{Weber:TAC18}.  A
  recommended entry point to the theory is
  Berger-Melli\`es-Weber~\cite{Berger-Mellies-Weber:1101.3064}.

  Let $T : \CC\to\CC$ be a monad.  A {\em (weakly) generic map} is a map $g:A 
  \to TG$
  such that for every map $f:X\to Y$ in $\CC$ and every solid square
  $$\xymatrix @! {
     A \ar[r]^e\ar[d]_g & TX \ar[d]^{Tf} \\
     TG \ar[r]_{Th} \ar@{..>}[ru]& TY
  }$$
  there exists a diagonal filler $d:G\to X$ (i.e.~such that $f\circ d = h$ and
  $Td \circ g = e$).  A monad is said to {\em admit (weak) generic
  factorisations} if every map $A\to TY$ admits a factorisation as a (weakly)
  generic map followed by a free map.  (Note that we talk about generic maps in
  the weak sense of \cite{Weber:TAC13}, not in the strict sense of
  \cite{Weber:TAC18}.)
\end{blanko}

\begin{lemma}
  The refinement maps are (weakly) generic.
\end{lemma}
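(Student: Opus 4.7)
The plan is to unpack a refinement via the graph-of-graphs description from the preceding paragraphs and then transport $X$-decorations piece-by-piece. Given a refinement $g: R \to \ov G$ and a commutative square
\[
\xymatrix{R \ar[r]^e \ar[d]_g & \ov X \ar[d]^{\ov f} \\ \ov G \ar[r]_{\ov h} & \ov Y}
\]
in $\PrSh(\elGr)$, I need to produce a diagonal filler $d: G \to X$ with $f \circ d = h$ and $\ov d \circ g = e$.

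First I would unpack $g$ as a residue-compatible graph of $G$-graphs $\gamma: \el(R) \to \Gr\comma G$ whose colimit is $G$ itself (with identity etale part); write $G_x := \gamma(x)$. Since $R$ is acyclic and therefore loopfree, each $G_x$ sits as an open subgraph of $G$, and $G$ is obtained from the $G_x$ by gluing along shared boundary ports. Analogously, $e: R \to \ov X$ amounts to an iso class $[X_x, \xi_x] \in \ov X[\res(x)]$ for each node $x$ of $R$, together with a colour $e(a) \in X[\triv]$ for each edge $a$ of $R$; naturality of $e$ forces the colour of each port of $X_x$ (read off via $\xi_x$) to agree with the corresponding $e(a)$.

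Commutativity of the square says that for every node $x$, the two $\res(x)$-$Y$-graphs $(X_x, f \circ \xi_x)$ and $(G_x, h|_{G_x})$ represent the same class in $\ov Y[\res(x)]$. For each $x$, choose an isomorphism $\alpha_x: G_x \isopil X_x$ of residue-graphs realising this equality, i.e., with $f \xi_x \alpha_x = h|_{G_x}$, and set $d_x := \xi_x \circ \alpha_x: G_x \to X$. The main step is to verify that the $d_x$ glue to a single bi-collection map $d: G \to X$. Each node of $G$ belongs to a unique $G_x$, so node decorations are uncontested; an edge of $G$ either lies inside a unique $G_x$ (no compatibility required) or is a boundary port shared by two $G_x, G_{x'}$ arising from an inner edge $a$ of $R$, in which case naturality of $e$ at $a$ forces both $d_x$ and $d_{x'}$ to assign the common colour $e(a) \in X[\triv]$, since $\alpha_x$ and $\alpha_{x'}$ fix ports. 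By the colimit formula $X[G] = \lim_x X[G_x]$ (Lemma~\ref{lem:F[G]}), the $d_x$ then assemble into the required $d$.

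Finally I would verify the two equations. The identity $f \circ d = h$ holds on each $G_x$ by the very choice of $\alpha_x$. For $\ov d \circ g = e$, note that at each node $x$, the map $g$ sends $x$ to $[G_x, G_x \hookrightarrow G]$, which $\ov d$ then sends to $[G_x, d|_{G_x}] = [G_x, \xi_x \alpha_x]$, and this is the class $[X_x, \xi_x] = e(x)$ via $\alpha_x$. The filler is only \emph{weakly} generic (not strictly unique), because each $\alpha_x$ is determined only up to a port-fixing automorphism of $G_x$, mirroring the non-uniqueness of the refinement/free factorisation of Proposition~\ref{factor}. The main subtlety of the proof is thus the coherent-gluing step, which relies crucially on naturality of $e$ along edges.
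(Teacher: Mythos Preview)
Your argument is correct, but the paper takes a shorter route. Rather than unpacking the graph-of-graphs data and gluing piece by piece, the paper simply factors the top map $e$ as a refinement $r: R \to \ov Q$ followed by a free map $\ov\phi: \ov Q \to \ov X$. The composite $R \to \ov Y$ then admits two refinement/free factorisations --- the given one through $\ov G$ (namely $g$ followed by $\ov h$) and the new one through $\ov Q$ (namely $r$ followed by $\ov{(f\phi)}$) --- and Proposition~\ref{factor} supplies a free isomorphism $\sigma: G \isopil Q$ comparing them. The filler $d := \phi\sigma$ then satisfies both triangle identities immediately.

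Your approach is in effect a hands-on re-derivation of the uniqueness clause of Proposition~\ref{factor}: choosing the $\alpha_x$ and gluing them amounts precisely to constructing the comparison isomorphism $\sigma$ node by node. This makes the mechanism visible but duplicates work already packaged in the preceding proposition. One small quibble: your citation of Lemma~\ref{lem:F[G]} for the formula $X[G] = \lim_x X[G_x]$ is not quite apt, since that lemma concerns presheaves represented by a \emph{graph}, whereas here $X$ is an arbitrary presheaf; what you actually need is that $G$ is the colimit of the diagram $\gamma$ in $\PrSh(\elGr)$ (which follows from density together with the fact that the $\el(G_x)$ jointly cover $\el(G)$ with overlaps only at the shared boundary edges), so that mapping out of $G$ is the corresponding limit.
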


\begin{proof}
  Given a square as in the definition of generic, with $g$ a refinement map,
  factor the top map $e$ as refinement followed by free.  We now have two 
  different refinement/free factorisations, so by the previous proposition, there 
  exists a free isomorphism comparing them.  This provides a (free) filler in
  the square. 
\end{proof}

\begin{blanko}{Nerve functor.}
  The embedding $i:\wtil\Gr\to\kat{Prpd}$ induces the nerve functor
  \begin{align*}
    N: \kat{Prpd} \ &  \longrightarrow \ \PrSh(\wtil\Gr)\\
    X \ &\longmapsto \ \Hom_{\kat{Prpd}}(i(  \_ ) , X)
  \end{align*}
  featured in the nerve theorem:
\end{blanko}

\begin{thm}\label{thm:nerve}
  The nerve functor $N : \kat{Prpd} \to \PrSh(\wtil\Gr)$ is fully faithful,
  and a  presheaf is in the essential image of $N$ if and only if it satisfies 
  the Segal condition, i.e.~its restriction to $\Gr$ is a sheaf.
\end{thm}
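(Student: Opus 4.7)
The plan is to follow the general Weber-style nerve theorem pattern (cf.~\cite{Berger-Mellies-Weber:1101.3064}), with the generic/free factorisation of Proposition~\ref{factor} and the equivalence $\PrSh(\elGr)\simeq\Sh(\Gr)$ as the two essential inputs. For \emph{full faithfulness}, the strategy is to show that $i:\wtil\Gr\hookrightarrow\kat{Prpd}$ is dense, which makes the restricted Yoneda functor $N$ automatically fully faithful. Every bi-collection is a canonical colimit of elementary-graph representables, and since the free-properad functor is a left adjoint, the free properad on any bi-collection is a canonical colimit of objects of $i(\wtil\Gr)$. Combining this with the standard presentation of an arbitrary properad $X$ as a coequaliser of free properads (from the monadic adjunction $\PrSh(\elGr)\rightleftarrows\kat{Prpd}$) exhibits $X$ as a canonical colimit indexed by $(\wtil\Gr\comma X)$, which is exactly density.

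For \emph{necessity} of the Segal condition, given a properad $X$ and $R\in\Gr$, the free--forgetful adjunction gives $NX(R)=\Hom_{\kat{Prpd}}(iR,X)=\Hom_{\elGr}(R,X|_{\elGr})=X|_{\elGr}[R]$, which is the sheaf extension of Lemma~\ref{lem:F[G]}, hence a sheaf on $\Gr$.

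For \emph{sufficiency}, let $P\in\PrSh(\wtil\Gr)$ with $P|_{\Gr}$ a sheaf, and set $F:=P|_{\elGr}$, so that $F[R]=P[R]$ for every $R\in\Gr$ by the sheaf condition. I equip $F$ with a properad structure $\ov F\to F$ as follows: for each $(m,n)$-graph $G$, the generic (refinement) map $\rho_G:C^m_n\to G$ from Proposition~\ref{factor} (written $(G,[\id])$ in the refinements discussion) induces $P(\rho_G):F[G]\to F[m,n]$. The unit and associativity axioms follow from functoriality of $P$ applied to composites of refinement maps in $\wtil\Gr$, which realise precisely the monad unit and multiplication. Finally $N(F)=P$ is verified on elementary graphs by construction, on all of $\Gr$ by comparing sheaf structures, and on refinement maps by the very definition of the properad structure.

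The main obstacle is the well-definedness step: the refinement $\rho_G$ is canonical only up to precomposition with automorphisms $\sigma\in\Aut_{(m,n)}(G)$ (the non-uniqueness clarified after Proposition~\ref{factor}), so the induced map $F[G]\to F[m,n]$ must be shown invariant under $\Aut_{(m,n)}(G)$ in order to descend to $\ov F[m,n]=\sum_{[G]}F[G]/\Aut_{(m,n)}(G)$. This invariance is exactly what the fibrewise-$\pi_0$ definition of $\ov F$ encodes, so the task reduces to tracking carefully that functoriality of $P$ on $\wtil\Gr\op$ respects the groupoid structure of $(m,n)\kat{-Gr}_{\iso}$.
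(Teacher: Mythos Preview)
Your proposal takes a different route from the paper's.  The paper's proof is a one-paragraph application of Weber's abstract nerve theorem (\cite{Weber:TAC18}, Theorem~4.10): having noted that $a:\Gr\to\PrSh(\elGr)$ is fully faithful and dense, it checks only that $a$ provides \emph{arities} for the free-properad monad, via the connected-fibres criterion of \cite{Berger-Mellies-Weber:1101.3064} (Propositions~2.12--2.14).  The fibre over a map $R\to\ov Y$ is the category of factorisations $R\to\ov Q\to\ov Y$ with $Q$ a graph and the second map free; the weak generic factorisation of Proposition~\ref{factor} supplies a weakly initial object, so this fibre is connected.  Full faithfulness and the Segal characterisation are then conclusions of the cited black box, not argued separately.

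You instead unpack the machinery by hand, arguing density directly and, for sufficiency, constructing the algebra structure explicitly from the refinement maps.  This is more elementary and self-contained, but leaves several verifications only sketched.  One caution on the full-faithfulness side: your argument shows that every properad is \emph{some} colimit of objects from $i(\wtil\Gr)$, which is strictly weaker than the canonical-colimit statement that density requires; a bit more is needed there.  The well-definedness obstacle you flag on the sufficiency side is genuine and resolves exactly as you indicate, since $\ov\sigma\circ\rho_G=\rho_G$ in $\wtil\Gr$ for $\sigma\in\Aut_{(m,n)}(G)$ (as observed in the paragraph on refinements just before Proposition~\ref{factor}), so functoriality of $P$ gives the invariance.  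Both arguments ultimately rest on Proposition~\ref{factor}; the paper trades self-containment for brevity by deferring all the coherence bookkeeping to \cite{Weber:TAC18} and \cite{Berger-Mellies-Weber:1101.3064}.
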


\begin{proof}
  It is clear that $\Gr$ is small and that $a: \Gr\to\PrSh(\elGr)$ is fully
  faithful and dense.  The nerve theorem will be an instance of the general
  nerve theorem of Weber~\cite{Weber:TAC18} (Theorem~4.10), if just we can
  establish that $a:\Gr\to \PrSh(\elGr)$ provides arities for the free-properad
  monad, which temporarily we denote by $T$.  By
  Berger-Melli\`es-Weber~\cite{Berger-Mellies-Weber:1101.3064} (Propositions
  2.12--2.14), to say that $a$ provides arities for $T$ is equivalent to saying
  that the natural functor $a \comma Ta \comma T \to a \comma T$ given by
  composition has connected fibres.  The objects in $a \comma T$ are maps
  $R\to \ov Y$, where $R$ is a graph and $Y$ is an arbitrary presheaf.
  The fibre over a map $R \to \ov Y$ is
  the category of factorisations
    $$\xymatrix{
     R \ar[rr]\ar[rd] && \ov Y \\
     & \ov Q \ar[ru] & 
  }$$
  such that the middle object is a graph, and the second map is free.
  But having (weak) generic factorisation say precisely that this factorisation
  category has a weakly initial object, and in particular is connected.
\end{proof}

\begin{blanko}{Remarks on the proof.}
  Weber established the general nerve theorem (\cite{Weber:TAC18}, Theorem~4.10)
  in the situation where $T$ is a monad on a category $\CC$ and $a:\Theta_0 \to
  \CC$ provides arities for $T$.  (To provide arities means that a certain left
  Kan extension is preserved by the monad.)  He showed furthermore 
  (\cite{Weber:TAC18}, Proposition~4.22)
  that if $\CC$ is a
  presheaf category and $T$ admits strict generic factorisations, then there is
  a canonical choice of $\Theta_0$, namely the full subcategory spanned by the
  objects that appear as middle objects of generic/free factorisations of maps from
  a representable to the terminal presheaf.  It
  was observed in \cite{Kock:0807.2874} (Remark~2.2.11) that the arguments in Weber's
  proof in
  fact yield the more general criterion: $a$ provides arities for $T$ if the
  natural functor $a \comma Ta \comma T \to a \comma T$ has connected fibres and
  admits a section.  Weber (personal communication) pointed out that in fact the
  section is not necessary (although of course in practice the section is often
  provided by generic factorisations).  Finally,
  Berger-Melli\`es-Weber~\cite{Berger-Mellies-Weber:1101.3064} (Propositions 
  2.12--2.14) turned the
  criterion into an if-and-only-if statement, and gave a more conceptual formulation
  and a more elegant proof, as part of a more streamlined overall treatment.
\end{blanko}

\begin{blanko}{Remarks on weak versus strict generic factorisations.}  
  Weber's original notion of generic morphism was the weak notion
  \cite{Weber:TAC13}, which is the one relevant in the present work.  Subsequent
  work \cite{Weber:TAC18}, \cite{Kock:0807.2874},
  \cite{Berger-Mellies-Weber:1101.3064} focused on the strict notion, which is
  intimately related to the notion of local right adjoint.
  The weak/strict distinction is closely related with
  the distinction between analytic and 
  polynomial functors, which in fact was Weber's motivation for introducing
  the notions of generic map in the first place~\cite{Weber:TAC13}.
  
  Although
  it is not a precise result at the time of this writing, it seems that in
  practice the weak situation (related to weakly cartesian monads) always arises
  from truncation of a strict situation in a homotopical setting, a monad which
  is cartesian in the homotopical sense.  
  This principle transpires from joint work with David Gepner~\cite{Gepner-Kock}
  developing the theory of polynomial functors and
  generic factorisations in $\infty$-categories, and observing in particular
  that in the $\infty$-world, the difference between analytic and polynomial 
  evaporates.  (This and some related results are previewed in
  \cite{Kock:MFPS28}.) 
  
  The present case seems to corrobate this principle.  The free-properad monad
  is only weakly cartesian, due to the presence of the $\pi_0$ in the formula
  for it.  In Section~\ref{sec:prpd-grpd} below, a groupoid-valued version of
  the monad is described which avoids this truncation.  I claim that the
  groupoid version of the free-properad monad is cartesian and is a local right
  adjoint, and that it therefore has strict generic factorisations (all in the
  homotopy sense of \cite{Gepner-Kock}).  
\end{blanko}

\subsection{Working in the category $\wtil\Gr$.}

\label{sec:wtilGr}

The category $\wtil\Gr$ is meant to contain all the combinatorics of graphs
relevant to properad theory.  The subcategory $\Gr$ already has the `geometric
part': open inclusions, etale maps, symmetries, colimits.  (In the following
when we talk about colimits they are understood to be in $\Gr$.)  The new maps
introduced, the refinements, represent the algebraic structure, embodying the
substitution aspects.  It is an important feature of the present approach that
this category in which the two aspects interact is generated by general
machinery (such as presheaves and monads).  While the abstract description as a
restricted Kleisli category was enough to establish the generic factorisations
and the nerve theorem, it is worthwhile, as we do in this subsection, to extract
more explicit descriptions of the refinement maps, and how they interact with
the etale maps.  

\begin{lemma}
  Any map $R\to Y$ in $\wtil\Gr$ sends edges to edges.
\end{lemma}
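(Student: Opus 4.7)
The plan is to unwind the definition of a Kleisli morphism and evaluate on the unit graph $\triv$.

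By construction (see diagram~\eqref{diagramforPhi}), a map $f: R \to Y$ in $\wtil\Gr$ is the same data as a morphism of presheaves $R \to \ov Y$ on $\elGr$. Evaluating at $\triv$ yields a function $R[\triv] \to \ov Y[\triv]$. But $R[\triv] = \Hom^{\et}(\triv, R) = A$ is the edge set of $R$, and by the very definition of the free-properad monad we have $\ov Y[\triv] = Y[\triv] = A_Y$. Hence $f$ canonically induces a function on edge sets, which is the assertion to be proved.

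To make this map explicit and confirm that it agrees with the geometric picture, I would then pass through the refinement/free factorisation $R \to Q \to Y$ of Proposition~\ref{factor}. The free half $Q \to Y$ is the image of an etale map of graphs and so sends edges to edges on the nose. The refinement $R \to Q$ is encoded by a residue-compatible graph of graphs $\gamma: \el(R) \to \Gr$ whose colimit is $Q$; by the very definition of graph of graphs in \ref{graphofgraphs}, $\gamma$ sends every $A$-object, i.e.~every edge of $R$, to a unit graph. The colimit cocone then carries each such unit graph to a specific edge of $Q$, and composing with $Q \to Y$ recovers the function on edges obtained above.

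There is no substantive obstacle here: the whole statement rests on the two matching conventions $\ov Y[\triv] = Y[\triv]$ and `$A$-objects go to unit graphs'. The only bookkeeping point is that the abstract edge function obtained from $f: R \to \ov Y$ coincides with the concrete one read off from the factorisation, and this follows at once from the naturality of the Kleisli correspondence, together with the fact that free maps in $\wtil\Gr$ act on edges via the underlying etale graph morphism.
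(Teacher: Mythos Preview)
Your proof is correct. Your first argument---evaluating the Kleisli presheaf map at $\triv$ and using $\ov Y[\triv]=Y[\triv]$---is in fact more direct than the paper's. The paper instead invokes the description of a general map $R\to Y$ in $\wtil\Gr$ as a functor $\el(R)\to\Gr\comma Y$ that is a residue-compatible graph of graphs, and then observes that by definition such a functor sends $A$-objects to unit $Y$-graphs; this is essentially your second argument, except that you route it through the refinement/free factorisation of Proposition~\ref{factor} while the paper applies the functor description to the whole map at once. Your first approach has the advantage of being a one-liner that needs none of the graph-of-graphs machinery; the paper's approach has the advantage of making the geometric content (edges go to unit subgraphs of $Y$) visible without unwinding the presheaf formalism.
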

\begin{proof}
  Indeed, the map is given by a functor $\el(R) \to \Gr\comma Y$, assumed to be
  a residue-compatible graph of graphs in the technical sense of \ref{rcgg}, and in
  particular it sends $A$-objects of $\el(R)$ to unit $Y$-graphs, which is
  the same as saying that it sends edges to edges.
\end{proof}

\begin{blanko}{Hom sets of refinements.}
  Let $R$ and $Y$ be graphs.  The set of refinement maps from $R$ to $Y$ is the
  set of isoclasses of functors $\el(R) \to \Gr\comma Y$, that are
  residue-compatible graphs of $Y$-graphs, and with the property that the colimit is
  terminal.  Since all those graphs $G_x$ map into $Y$ as open inclusions,
  instead of
  calculating the colimit in $\Gr\comma Y$, we can calculate it in the
  poset $\Sub(Y)$.  Here there are no isomorphisms, so we can say that
  the set of refinements $R \to Y$ is the set of residue-compatible 
  graphs of subgraphs-of-$Y$
  $$
  \big\{ \gamma:\el(R) \to \Sub(Y) \mid \gamma \text{ rcgg, } \colim(\gamma) = Y \big\} .
  $$

  In the special case where $R = C^m_n$, the unique node must be sent to the
  subgraph $Y$ itself, so the only choice involved is where to send the edges,
  which amounts to specifying an isomorphism $C^m_n \simeq \res(Y)$.
  So when this is possible at all ($Y$ has the correct residue), there are
  $m!n!$ elements in the hom set.
  
  On the other hand, for $Y$ fixed, we can describe the set of all possible
  refinements $R \to Y$, with variable $R$.  They are given precisely by the
  bijective-on-nodes open covers of $Y$, i.e.~collections of open subgraphs of
  $Y$ such that each node is in precisely one subgraph.  Such an open cover is a
  gluing datum, and $R$ is the indexing graph of it.  (This cover interpretation
  of generic maps was used by Berger~\cite{Berger:Adv} in a more general context
  (see also \cite{Kock:0807.2874}), and can be seen as a historical precursor to
  the notion of generic map.)
\end{blanko}

\begin{lemma}
  A refinement is completely determined by its values on edges.
\end{lemma}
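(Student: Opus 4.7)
The plan is to unwind what the data of a refinement comprises and then recover all of it from the edge map. A refinement $R\to Y$ is, by the discussion above, a residue-compatible graph of subgraphs-of-$Y$ whose colimit is the whole of $Y$; equivalently, a bijective-on-nodes open cover $\{G_x\}_{x\in N'}$ of $Y$ with indexing graph $R$, together with the residue-compatibility isomorphisms $\res(x)\simeq\res(G_x)$. The isomorphism at each node $x$ records, for each flag of $x$, which port of $G_x$ (an edge of $Y$) it corresponds to; since the structure maps $s,t$ of $R$ are injective, this is exactly the content of $\alpha$ restricted to the edges adjacent to $x$. Hence the residue-compatibility identifications are determined by $\alpha$, and it suffices to prove that the open cover $\{G_x\}$ itself is determined by $\alpha$.

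Since each $G_x$ is the open hull of its node set $N_x\subset N$, the task reduces to recovering the partition $N=\bigsqcup_{x\in N'} N_x$ from $\alpha$. The key step is a classification of the inner edges of $Y$: an inner edge of $Y$ lies in the image of $\alpha$ (restricted to inner edges of $R$) precisely when it is a \emph{crossing} edge, i.e., one of the edges along which the gluing datum $S\rightrightarrows\sum G_x\to Y$ identifies ports; equivalently, its two endpoints lie in different blocks of the partition. Edges not in the image of $\alpha$ are then inner edges of a unique $G_x$, and hence have both endpoints in the same block. Because each $G_x$ is connected (its residue is a single corolla, so its nodes are linked through inner edges of $G_x$), the blocks $N_x$ are precisely the connected components of the subgraph of $Y$ on all nodes of $N$ with only the non-crossing inner edges retained. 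Finally, the label of each component is pinned down by choosing any flag of $x$ in $R$ and following its image under $\alpha$ to the adjacent endpoint, which must lie in $N_x$.

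The main obstacle is establishing the edge dichotomy just used: one has to check that ports of $R$ map under $\alpha$ to ports of $Y$ (never to inner edges) and that inner edges of $R$ map bijectively onto the inner crossing edges of $Y$, so that the image $\alpha(\text{inner edges of } R)\subset A$ really picks out exactly the crossing edges. The first half follows from $\res(Y)\simeq\res(R)$ (Lemma~\ref{connQconnR}, applied with $Q=Y$), which ensures imports and exports of $R$ correspond to imports and exports of $Y$; the second follows from the explicit construction of the colimit of the gluing datum together with the injectivity of $s$ and $t$ in $R$. The degenerate case where some $G_x$ is a unit graph (contributing no nodes to $Y$) is handled separately but trivially, since $\alpha$ still records which edge of $Y$ this $U$ inhabits.
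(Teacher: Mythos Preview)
Your proof is correct and more explicit than the paper's. The paper's argument is brief: restrict the refinement $R \to Y$ to each canonical corolla neighbourhood $C_x \hookrightarrow R$ via the refactoring construction, and invoke the special case that a refinement out of a corolla is determined by where its edges land. But that local reduction alone does not obviously pin down the subgraph $G_x \subset Y$ --- two distinct open subgraphs of $Y$ can have the same set of ports. You supply the missing global ingredient: because the $\{G_x\}$ together form a bijective-on-nodes open cover, the partition of the node set of $Y$ can be read off from $\alpha$ as the connected components after deleting the crossing edges, and each $G_x$ is then recovered as the open hull of its block. This is a genuinely different (and more self-contained) route than the paper's restriction-to-corollas sketch.

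One small imprecision: you assert that inner edges of $R$ map \emph{bijectively} onto the crossing inner edges of $Y$. This fails precisely in the degenerate case you flag at the end --- when some $G_x$ is a unit graph, the two inner edges of $R$ adjacent to $x$ (if both are inner) have the same image in $Y$. Fortunately your argument only requires that the \emph{image} of $\alpha$ on inner edges of $R$ coincide with the set of crossing edges, and this does hold: acyclicity of $R$ forbids a chain of unit $G_x$'s from closing up, so the two endpoints of the resulting edge in $Y$ lie in distinct non-unit blocks. Since you already treat the unit case separately, the overall logic is sound; you might simply replace ``bijectively onto'' by ``onto''.
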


\begin{proof}
  Given a refinement $R \to Y$, for each node $x$ in $R$ we have the canonical
  neighbourhood which is a corolla in $R$.  We can restrict the refinement to
  each of these corollas (see \ref{refactor} below for details), and in each
  case, by the previous paragraph, the refinement is determined by its values on
  edges.  Hence  also the whole map is determined by
  its values on edges.
\end{proof}

\begin{blanko}{Remark.}
  We also observed (\ref{GGAA}) that an etale map is determined by its values on
  edges, except when the domain has no edges.  Even with this exception, this does not
  imply that a general map in $\wtil\Gr$ is determined by its value on edges,
  because the edge map does not determine the factorisation.  (For examples, see
  \cite{Hackney-Robertson-Yau}.)
\end{blanko}

\begin{blanko}{Composition in $\wtil\Gr$.}
  Given maps in $\wtil\Gr$
  $$
  R \to Q \to Q'
  $$
  where $R\to Q$ is given by a diagram $\gamma: \el(R) \to \Gr\comma Q$
  sending a node $x$ in $R$ to some etale map $G_x \to Q$, with a chosen 
  isomorphism $\res(x) \simeq \res(G_x)$,
  and where $Q\to Q'$ is given by a diagram $\delta: \el(Q) \to \Gr\comma Q'$.
  Then
  the composite map $R\to Q'$ is described as follows.
  Put
  $$
  G'_x := \colim\big( \el(G_x) \to \el(Q) \stackrel\delta\to \Gr\comma Q' \big).
  $$
  These graphs are the
  ingredients of the new diagram $\gamma': R \to \Gr\comma Q'$,
  which defines the composite map.
  (Essentially we are just saying that a colimit indexed by a colimit can be
  expressed as a single colimit, and basically we are just repeating the
  associativity argument.)
\end{blanko}

\begin{blanko}{Refactoring etale/refinement as refinement/etale.}\label{refactor}
  As a special case, given an etale map $R'\to R$ and a refinement map $R \to Q$ defined by 
  $\gamma:\el(R) \to \Gr$ (with colimit $Q$), then in the diagram
  $$\xymatrix{
     R' \ar[r]^{\text{etale}}\ar@{..>}[d]_{\text{refine}} & R \ar[d]^{\text{refine}} \\
     Q' \ar@{..>}[r]_{\text{etale}} & Q
  }$$
  put $Q' := \colim\big( \el(R') \!\to\! \el(R) \!\stackrel\gamma\to \!\Gr\big)$.
  By construction this defines a refinement $R' \to Q'$, and an etale map $Q'\to 
  Q$ is induced from the description of $Q'$ as a 
  colimit.
\end{blanko}

\begin{prop}\label{po}
  Given an open inclusion $H \to G$ and a refinement $H \to Q$,
the pushout
$$\xymatrix @!@+4pt {
   H \ar[r]^{\text{open}}\ar[d]_{\text{refine}} & G \ar@{..>}[d]^{\text{refine}} \\
   Q \ar@{..>}[r]_{\text{open}} & P\ulpullback
}$$
exists in $\wtil\Gr$, and the dotted maps are again an open inclusion and a 
refinement as indicated.  The pushout is calculated by {\em identity extension} 
(see proof),
and in particular there is a natural isomorphism
$P\shortsetminus Q \simeq G\shortsetminus H$.
\end{prop}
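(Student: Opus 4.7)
The plan is to construct $P$ by \emph{identity extension}: perform the refinement on $H$ and leave the etale complement alone. Recalling that $G$ is the colimit of the gluing datum $S \rightrightarrows H + \compl H$ with $S = H \cap \compl H$, and that the refinement $H \to Q$ is residue-compatible (hence bijective on ports), I would observe that $S \to H \to Q$ is again a pair of port-preserving injections of a shrub into $Q$, and define
$$P := \colim\bigl(S \rightrightarrows Q + \compl H\bigr),$$
the colimit being taken in $\Gr$. To verify $P$ is still a graph (connected and acyclic), I would apply Lemma~\ref{connQconnR} together with its acyclicity companion, observing that the indexing graph of this new gluing datum is canonically isomorphic to the indexing graph presenting $G$, since the residues of $Q$ and $H$ coincide.

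Next I would construct the dotted maps. The coprojection $Q \hookrightarrow P$ is an open inclusion by the lemma that summands embed openly when the indexing graph is loopfree. The refinement $G \to P$ is encoded by the residue-compatible graph of subgraphs $\gamma': \el(G) \to \Sub(P)$ whose restriction to $\el(H)$ is the defining refinement $\gamma: \el(H) \to \Sub(Q)$ composed with $Q \subset P$, and whose restriction to $\el(\compl H)$ assigns to each node its canonical neighbourhood inside $P$. Compatibility on $\el(S)$ is automatic because the relevant edges are ports on both sides, identified in $P$ by construction; the colimit of $\gamma'$ is $P$ itself, so the map is indeed a refinement. The identification $P \shortsetminus Q \simeq G \shortsetminus H$ then follows immediately, since the nodes of $P$ outside $Q$ are exactly the nodes of $\compl H = G \shortsetminus H$, with the same incidence data.

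For the pushout property, I would take $\wtil\Gr$-maps $\alpha: G \to Z$ and $\beta: Q \to Z$ agreeing on $H$, and describe each (using the remark following Proposition~\ref{factor}) as a residue-compatible graph of $Z$-graphs on its category of elements. Composing $\alpha$ with the open inclusion $\compl H \hookrightarrow G$ gives a map $\compl H \to Z$; this and $\beta$ agree on $\el(S)$ by the compatibility hypothesis, and hence glue to a single residue-compatible graph of graphs on $\el(P) \cong \el(Q) +_{\el(S)} \el(\compl H)$, inducing the required map $P \to Z$. Uniqueness is clear from the restriction properties. The principal obstacle is verifying the pushout description of $\el(P)$; this is a direct unpacking of the coequaliser construction (edges of $P$ are formed by identifying $S$-copies, nodes remain disjoint, and morphisms in $\el$ only go from edges to nodes), but it requires some care to state cleanly.
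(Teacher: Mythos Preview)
Your proof is correct and follows essentially the same approach as the paper, which likewise constructs $P$ both via the identity-extended functor $\gamma':\el(G)\to\Gr$ and, alternatively, as the gluing $S \rightrightarrows Q + \compl H$; you simply lead with the latter description and are a bit more explicit about verifying connectedness and acyclicity. Your pushout argument via the decomposition $\el(P) \cong \el(Q) +_{\el(S)} \el(\compl H)$ is a slightly more elaborate rephrasing of the paper's observation that the nodes of $P$ are exactly those of $Q$ together with those of $G\shortsetminus H$, so the required functor $\el(P)\to\Gr\comma P'$ is uniquely determined.
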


\begin{proof}
  If $H \to Q$ is given by $\el(H) \to \Gr$ with colimit $Q$,
  we need to extend to a functor $\gamma : \el(G) \to \Gr$ and define $P$ to be 
  its colimit.  Simply assign to each node $x$ in the complement of $H$
  the graph given by the canonical neighbourhood of $x$ in $G$.
  The colimit description provides the etale map $Q \to P$, and from 
  \ref{refactor} it is clear that the resulting square commutes.
  Alternatively, just as $G$ is obtained by gluing $H$ to its complement 
  $\compl H$ along $S:= H \cap \compl H$, the new graph $P$ is obtained by gluing 
  $Q$ to $\compl H$ along $S$.  This makes sense canonically since $S$ is a 
  subset of the set of ports of $H$, and since $Q$ 
  and $H$ have the same ports.
  From this description, it is clear that $Q\to P$
  is an open inclusion again.  It remains to check
  that the square is a pushout, but again this follows from the 
  construction of $P$: given another commutative square with the same solid part
  and with a different $P'$ instead of $P$, we need to establish that there is 
  a unique map $P \to P'$ making every everything commute.  To give this map
  is to give $\el(P) \to \Gr\comma P'$, and it is readily seen that there is a 
  unique such functor, since the nodes in $P$ are identified with the nodes in 
  $Q$ plus the nodes in $G\shortsetminus H$.
\end{proof}
(Note that it is not true in general that etale maps allow pushouts along 
refinements.)

\begin{blanko}{Graph substitution (cf.~\cite{Yau-Johnson},~\cite{Hackney-Robertson-Yau}.})
  In the situation of Proposition~\ref{po}, if $H$ consists of a single node 
  $x$,
  then $Q$ is a graph with $\res(Q) = \res (x)$, and $P$ is the result of
  {\em substituting $Q$ into the node $x$ of $G$}.
\end{blanko}

\begin{lemma}\
  In the situation of Proposition~\ref{po},
  if $H \to G$ is convex, then $Q \to P$ is convex.
\end{lemma}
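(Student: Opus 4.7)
My plan is to reduce the claim directly to the characterization of convexity in terms of etale complements given in the excerpt, combined with the natural isomorphism $P\shortsetminus Q \simeq G\shortsetminus H$ supplied by Proposition~\ref{po}. The strategy is to show that the triple $(Q, \compl Q, Q\cap\compl Q)$ inside $P$ is canonically isomorphic to the triple $(H, \compl H, H\cap\compl H)$ inside $G$, so that the convexity criterion transfers verbatim with no further work.

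First I would dispose of the trivial case where $Q$ is a single edge, for which convexity is automatic. Otherwise $Q$ contains at least one node, so the etale-complement criterion applies. The key input is that a refinement preserves residue (as recorded in Lemma~\ref{connQconnR}), so $H$ and $Q$ share the same set of ports under a canonical bijection. Using the explicit gluing description of $P$ in the proof of Proposition~\ref{po}, where $P$ is built from $Q$ and $\compl H$ by gluing along the common port subset $S$, one checks that the boundary set $S_Q := Q\cap\compl Q$ is identified with $S_H := H\cap\compl H$, and that $\compl Q$ is identified with $\compl H$ as a graph equipped with this distinguished subset of edges.

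Given this identification the conclusion is immediate: the etale-complement criterion says that $H$ is convex in $G$ iff no path in $\compl H$ connects two distinct edges of $S_H$, and that $Q$ is convex in $P$ iff no path in $\compl Q$ connects two distinct edges of $S_Q$; these two statements become literally the same under the above identifications. The only real bit of work is the bookkeeping of port identifications, namely verifying that the ports of $H$ which become inner edges upon inclusion into $G$ correspond, under residue-preservation of the refinement, to the ports of $Q$ which become inner edges upon inclusion into $P$. This compatibility is essentially built into the construction of $P$ as "$Q$ glued to $\compl H$ along $S$" in Proposition~\ref{po}, so I anticipate no genuine obstacle, only some notational unpacking.
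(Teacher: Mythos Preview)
Your proposal is correct and follows essentially the same approach as the paper: the paper's proof is the one-line observation that the claim follows immediately from $P\shortsetminus Q \simeq G\shortsetminus H$ together with the complement characterisation of convexity, and your argument is precisely an unpacking of this, making explicit the identification of $S_Q$ with $S_H$ and of $\compl Q$ with $\compl H$ that the paper leaves implicit.
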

\begin{proof}
  This follows immediately from $P\shortsetminus Q \simeq G\shortsetminus H$,
  together with the complement characterisation of convexity (\ref{convcomp}).
\end{proof}

\begin{cor}\label{refinenode}
  If an open subgraph arises from refinement of a single node, then it is convex.
\end{cor}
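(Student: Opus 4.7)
The plan is to reduce directly to the preceding lemma. By the graph-substitution paragraph immediately above, the corollary concerns the situation where $H$ in Proposition~\ref{po} is the canonical neighbourhood of a single node $x$ in the ambient graph $G$; this is in fact an open subgraph of $G$ by~\ref{cancov}, since $G$ is acyclic and hence loopfree. The preceding lemma will then yield convexity of the open subgraph $Q\subset P$, provided only that the original open inclusion $H\subset G$ is itself convex.

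Thus the substantive step is to show that the canonical neighbourhood $H$ of a single node $x$ is convex in $G$. I would test the lifting property for convexity against an arbitrary path $L_k \to G$ together with an inclusion $U+U\to H$ mapping the two summands respectively to an import $a$ and an export $b$ of $H$, compatibly with the endpoints of the path. By definition of imports of $H$, the edge $a$ is incoming to $x$ in $G$; by injectivity of $s$ in $G$ it is incoming to \emph{no other} node. Dually, $b$ is outgoing from $x$ and from no other node. Hence the first and last nodes of $L_k$ are both forced to map to $x$.

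The case $k=1$ gives the desired lift $L_1\to H$ directly. The case $k\geq 2$ is ruled out by acyclicity: the sequence of image nodes $x=y_1,y_2,\dots,y_k=x$ together with the edges between them would be a non-trivial closed walk in $G$; even if all the $y_i$ collapsed to $x$, the intermediate edges would be loops at $x$, which is equally forbidden by loopfreeness. The one subtlety to watch is that $L_k\to G$ is not required to be etale or even injective, but injectivity of $s$ and $t$ inside $G$ still rigidly pins down the endpoint nodes as $x$, and then acyclicity closes the argument. Hence $H\subset G$ is convex, and the corollary follows from the preceding lemma.

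I expect no real obstacle beyond bookkeeping: the whole content is the endpoint-rigidity argument in the middle paragraph, which is essentially the observation that in an acyclic graph a node has no non-trivial ``return path'' to itself.
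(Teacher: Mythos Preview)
Your reduction to the preceding lemma is exactly the paper's intended argument, and the substantive step you isolate --- convexity of the canonical neighbourhood of a single node --- is the right thing to verify. Your acyclicity argument for that step is essentially sound.

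There is one small gap. The lifting definition of convexity places the import/export condition only on the \emph{left} map $U+U\to L_k$; the top map $U+U\to H$ is arbitrary. So you cannot assume from the outset that $a$ is an import of $H$ and $b$ an export: $a$ and $b$ may be any edges of $H$, i.e.\ any edges incident to $x$. The cases you omit are, however, even easier: if (say) $a\in O_x$ and $b\in I_x$, then a path $a\le b$ in $G$ together with the step $b\lessdot a$ through $x$ already yields a wheel; the mixed cases $a,b\in I_x$ or $a,b\in O_x$ similarly force $y_1=x$ (resp.\ $y_k=x$) and then close up through $x$ into a wheel for $k\ge 1$. So your approach goes through once these residual cases are noted, and the paper's own treatment (which leaves the corollary as an immediate consequence of the lemma, without spelling out the corolla's convexity) is no more detailed than yours.
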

Conversely:
\begin{lemma}\label{conv=ref}
  If $Q \subset P$ is a convex open subgraph (of an acyclic graph $P$), then there exists an (acyclic) 
  graph $G$
  with a node $x$ and a refinement $x\to Q$ yielding $Q \subset P$ by pushout.
\end{lemma}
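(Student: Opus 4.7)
\begin{proof*}{Proof sketch.}
The plan is to construct $G$ by \emph{collapsing} the convex open subgraph $Q$ inside $P$ down to a single corolla, and then to recover $P$ as the pushout appearing in Proposition~\ref{po}.

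More precisely, let $S := Q \cap \compl Q$ be the set of boundary edges and recall that $P$ is naturally the colimit of the gluing datum \eqref{SHHG}: $S \rightrightarrows Q + \compl Q$. Set $R := \res(Q)$, the corolla having the same ports as $Q$, and define $G$ to be the colimit of the analogous gluing datum
$$
  S \rightrightarrows R + \compl Q.
$$
This makes sense because $S$ is a set of ports of $Q$, hence also of $R$, and the import/export-preserving property of $S \to Q$ is inherited by $S \to R$. Let $x$ be the unique node of $R$; by construction $R$ embeds as an open subgraph of $G$ with a single node $x$, and $\compl R \simeq \compl Q$ in $G$.

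Next I will check that $G$ is a connected acyclic graph. For connectedness, the indexing graph (in the sense preceding Lemma~\ref{connQconnR}) of the gluing datum for $G$ equals that of the gluing datum for $P$, since taking residue replaces both $Q$ and $R$ by the same corolla; hence by Lemma~\ref{connQconnR}, $G$ is connected because $P$ is. For acyclicity, this is where the convexity hypothesis is essential. A hypothetical wheel $W \to G$ either lies entirely in $\compl Q \subset P$ (impossible since $P$ is acyclic), or passes through the node $x$ at least once. In the latter case, deleting from $W$ the portions inside $R$ produces a path in $\compl Q$ starting at some export-boundary edge $a \in S$ and ending at some import-boundary edge $b \in S$, with $a \neq b$ since they are incident to $x$ through opposite sides of the corolla. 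By the complement characterisation of convexity of $Q$ in $P$, no such path exists in $\compl Q$, giving a contradiction. Hence $G$ is acyclic.

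Finally, define the refinement $x \to Q$ as the map represented by $(Q, [\id])$, which is well-defined since $\res(Q) \simeq R$ is precisely the canonical neighbourhood of $x$ in $G$. Applying Proposition~\ref{po} to the open inclusion $R \hookrightarrow G$ and this refinement $R \to Q$ produces a pushout $P'$ obtained by identity extension, namely by gluing $Q$ to $\compl R$ along $R \cap \compl R$. But $\compl R = \compl Q$ inside $G$ and $R \cap \compl R = S$, so $P'$ is precisely the gluing of $Q$ to $\compl Q$ along $S$, which is $P$. The main obstacle is the acyclicity check for $G$, and that is exactly where convexity enters in an essential way; the rest is bookkeeping with the gluing datum formalism already developed.
\end{proof*}
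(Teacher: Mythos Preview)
Your proof is correct and follows essentially the same approach as the paper: construct $G$ by replacing $Q$ with $\res(Q)$ in the gluing datum $S \rightrightarrows Q + \compl Q$, then use the complement characterisation of convexity to rule out wheels through the new node $x$. Your treatment is in fact slightly more complete, since you also verify connectedness of $G$ (via Lemma~\ref{connQconnR}) and spell out why the pushout of Proposition~\ref{po} reproduces $P$, whereas the paper leaves these implicit.
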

\begin{proof}
  If $G$ and $x$ exist, we must have $G\shortsetminus x = P \shortsetminus Q$.
  Put $S := Q \cap \compl Q$, then $P$
  is the gluing 
  $$
  S \rightrightarrows Q + \compl Q \to P.
  $$
  Since $S \subset \operatorname{ports}(Q) = \operatorname{ports}(\res Q)$,
  we can glue in $\res (Q)$ instead of $Q$, obtaining $G$ in this way:
  $$
  S \rightrightarrows \res (Q) + \compl Q \to G.
  $$
  It remains to see that $G$ is acyclic --- this is where convexity of $Q$ 
  comes in: a wheel in $G$ through $x$ would induce a path in $G\shortsetminus 
  x = P \shortsetminus Q$
  from an edge in $S$ to another edge in $S$.  But this is impossible since $Q$ 
  is convex (\ref{convcomp}).  (And of course $G$ cannot contain a wheel not 
  through $x$, since 
  they would also be a wheel in $G\shortsetminus x = P \shortsetminus Q \subset 
  P$.)
\end{proof}
(Note that $G$ might be an inner edge in $Y$; then the complement is
not a subgraph.)

\medskip

Corollary~\ref{refinenode} and Lemma~\ref{conv=ref} together are also
established in \cite{Hackney-Robertson-Yau}, Theorem 5.38, modulo
set-up and terminology.

\begin{lemma}\label{thruconvex}
  In the situation of \ref{refinenode} and \ref{conv=ref},
  $$\xymatrix @!@+4pt {
   C \ar[r]^{\text{open}}\ar[d]_{\text{refine}} & G \ar[d]^{\text{refine}} \\
   Q \ar[r]_{\text{open}} & P\ulpullback
}$$
 where $C$ is a corolla, suppose $x$ and $y$ are edges in $P\shortsetminus Q =
 G\shortsetminus C$.  If there is a path in $P$ from $x$ to $y$, then there
 is also a path in $G$ from $x$ to $y$.
\end{lemma}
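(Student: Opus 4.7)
The plan is to take the given path $\phi : L_k \to P$, identify the segment of nodes of $L_k$ whose images land in $Q$, and replace that segment with a single traversal of the unique node of $C$. Write the edges of $L_k$ as $e_0,\ldots,e_k$ and its nodes as $v_1,\ldots,v_k$, so that $\phi(e_0)=x$ and $\phi(e_k)=y$. Since $Q$ and $\compl Q$ are open subgraphs of $P$ whose node sets partition the nodes of $P$, each image $\phi(v_i)$ lies in exactly one of $Q$ or $\compl Q$; let $I_Q:=\{i\mid\phi(v_i)\in Q\}$.

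The key step, and the only use of convexity, is to show that $I_Q$ is a (possibly empty) contiguous interval in $\{1,\ldots,k\}$. Otherwise, pick a maximal gap: $a,b\in I_Q$ with $b>a+1$ and $\phi(v_j)\in\compl Q$ for all $a<j<b$. The edges $\phi(e_a)$ (exiting $Q$) and $\phi(e_{b-1})$ (re-entering $Q$) both lie in $S=Q\cap\compl Q$, and they are distinct: if they coincided, injectivity of $s$ in $P$ would force the unique in-flag of this edge to pick out a single node, giving $\phi(v_{a+1})=\phi(v_b)$, which is impossible since the node sets of $Q$ and $\compl Q$ are disjoint. The intermediate nodes $v_{a+1},\ldots,v_{b-1}$ then give a path in $\compl Q$ between two distinct edges of $S$, contradicting the complement characterisation of convexity of $Q$.

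Given contiguity, the construction is straightforward. If $I_Q=\emptyset$, every node of the path is in $\compl Q$ and every edge is incident to such a node, so $\phi$ factors through $\compl Q=G\shortsetminus C\subset G$ and is already a path in $G$. Otherwise $I_Q=[a,b]$, and we form $L_{k'}$ with $k'=k-(b-a)$ by collapsing $v_a,\ldots,v_b$ to a single node $v'$. Define $\psi:L_{k'}\to G$ by sending $v'$ to the node $c$ of $C$, the other nodes to their $\phi$-images in $\compl Q\subset G$, edges outside the collapsed region to their $\phi$-images, and the edges flanking $v'$ to $\phi(e_{a-1})$ and $\phi(e_b)$ (or to $x$ and $y$ at the extreme cases $a=1$ or $b=k$). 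Flag compatibility at $v'$ is the only nontrivial check: the incoming flanking edge lies in $S$ and was an import of $Q$, hence an import of $C=\res(Q)$ in $G$ by the refinement structure; symmetrically the outgoing flanking edge is an export of $C$. This yields the required path $\psi:L_{k'}\to G$ from $x$ to $y$, and identifies the contiguity claim as the main obstacle.
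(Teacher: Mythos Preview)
Your proof is correct and follows essentially the same approach as the paper's: use convexity of $Q$ to show the path meets $Q$ in a single contiguous segment, then replace that segment by a single pass through the node of $C$. The paper's proof is a five-line sketch of exactly this argument, whereas you spell out the contiguity step (via the complement characterisation of convexity) and the construction of the new path in full detail, including the edge cases $a=1$ and $b=k$.
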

\begin{proof}
  If the path is disjoint from $Q$ it is also a path is $G$.  Otherwise, 
  since $Q$ is convex, the path cannot enter and leave $Q$ twice.  So it goes in
  three steps: first from $x$ to $x'\in \operatorname{im}(Q) = \operatorname{im}(C)$, second from
  $x'$ to $y'$ inside $Q$, and third from $y' \in 
  \operatorname{ex}(Q)=\operatorname{ex}(C)$ to $y$.
  Now there is clearly also a path in $C$ from $x'$ to $y'$, so by concatenation of
  paths there is a path in $G$ from $x$ to $x'$ to $y'$ to $y$.
\end{proof}

\begin{prop}\label{convref-refconv}
  In any commutative square in $\wtil\Gr$
  $$\xymatrix @!@+4pt {
   H \ar[r]^{\text{etale}}\ar[d]_{\text{refine}} & G \ar[d]^{\text{refine}} \\
   Q \ar[r]_{\text{etale}} & P  ,
  }$$
  if $H \to G$ is a convex inclusion then $Q \to P$ is a convex inclusion.
\end{prop}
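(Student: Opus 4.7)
The plan is to refactor the square so as to make the two refinements explicit, then to verify that $Q \to P$ is an open inclusion, and finally to establish convexity by contradiction using the complement characterisation stated earlier. For the first step, the refinement $G \to P$ is given by a residue-compatible graph of graphs $\gamma : \el(G) \to \Gr$ with $P = \colim\gamma$; write $G_x := \gamma(x)$, so that $P$ is obtained by gluing the $G_x$ along the inner edges of $G$. By the refactorisation~\ref{refactor} applied to $H \hookrightarrow G$ followed by the refinement $G \to P$, the refinement $H \to Q$ must, up to free isomorphism, be given by the restriction $\gamma|_{\el(H)}$, so that $Q = \colim\gamma|_{\el(H)}$ is obtained by gluing the same $G_x$ for $x$ a node of $H$ along the inner edges of $H$. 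Injectivity of $H \hookrightarrow G$ on nodes and edges propagates to $Q \to P$, which combined with the assumed etale property shows $Q \to P$ is an open inclusion.

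For convexity I would use the characterisation recorded earlier: a connected open subgraph of an acyclic graph is convex iff there is no path in its etale complement between two distinct edges of its boundary. Residue-compatibility of $\gamma$ gives $\res(Q) \simeq \res(H)$, so the boundaries $S_Q := Q \cap \compl Q$ and $S_H := H \cap \compl H$ are in natural bijection, both consisting of the inner edges of $G$ that are ports of $H$. Suppose for contradiction there is a directed path in $\compl Q \subset P$ between two elements $e_{\mathrm{in}}, e_{\mathrm{out}} \in S_Q$; they are distinct because $P$ is acyclic and a directed path in an acyclic graph cannot revisit an edge. Every node $v$ of $P$ lies in a unique $G_x$; the projection $v \mapsto x$, together with the identification of gluing edges with inner edges of $G$, sends the offending path to a sequence in $G$ which, after collapsing stretches lying entirely within a single $G_x$ into a single visit to $x$, becomes a path $e_{\mathrm{in}} \to x_1 \to e_1 \to \cdots \to x_m \to e_{\mathrm{out}}$ in $G$ visiting only nodes $x_j \in \compl H$ (since nodes of $\compl Q$ lie in $G_x$ for $x \in \compl H$). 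This is a path in $\compl H$ between the two distinct edges $e_{\mathrm{in}}, e_{\mathrm{out}} \in S_H$, contradicting convexity of $H$ in $G$.

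The main technical obstacle will be making the projection precise: the path in $P$ can alternate freely between inner edges of the local refinements $G_x$ (invisible in $G$) and gluing edges (visible as inner edges of $G$). Residue-compatibility of $\gamma$ is precisely what ensures that the source/target data at the gluing boundaries match the source/target data in $G$, so that import/export directionality is preserved under projection, and the collapsed sequence really is a path in $\compl H$ terminating at the prescribed boundary edges in the correct r\^oles.
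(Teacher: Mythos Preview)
Your argument is correct and reaches the same conclusion as the paper, but the organisation differs. The paper factors the square vertically into two stages: first it refines only the nodes of $H$, obtaining the pushout square of Proposition~\ref{po} (so the intermediate map $Q \to P'$ is convex), and then it refines the remaining nodes outside $Q$, appealing iteratively to Lemma~\ref{thruconvex} to pull an offending path in $P$ back to one in $P'$. You instead perform the path-projection globally in one step, collapsing each local refinement $G_x$ to the single node $x$ and reading off a path in $\compl H$ directly. This is essentially the content of Lemma~\ref{thruconvex} applied simultaneously to all nodes of $G$ rather than one at a time, and it bypasses both the intermediate graph $P'$ and the pushout lemma. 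Your route is more self-contained; the paper's is more modular, reusing lemmas already in place. One small caveat: the complement characterisation of convexity you invoke is stated in the paper only for subgraphs containing at least one node, so the degenerate case where $Q$ is a single edge (which can occur when $H$ is linear and every node is refined to the unit graph) should be noted separately as trivially convex.
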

In other words, refactoring etale/refinement to refinement/etale as in
\ref{refactor}, takes convex/refinement to refinement/convex.
\begin{proof}
  The square factors vertically as
  $$\xymatrix @!@+6pt {
   H \ar[r]^{\text{convex}}\ar[d]_{\text{refine}} & G \ar[d]^{\text{refine}} \\
   Q \ar[r]_{\text{convex}}\ar[d]_{\text{id}} & P' \ulpullback 
   \ar[d]^{\text{refine}} \\
   Q \ar[r]_{\text{etale}} & P .
  }$$
  Here the vertical maps in the top square refine nodes in $H$, and the middle
  map is convex by Proposition~\ref{po}.  The vertical maps in the bottom square
  refine nodes outside $Q$.  Therefore $Q \to P$ is an open inclusion since $Q 
  \to P'$ is.  Suppose there were a path in $P$ violating convexity
  of $Q \to P$.  Then by iterated use of Lemma~\ref{thruconvex} there would also
  be a path in $P'$ violating the convexity of $Q \to P'$.
\end{proof}

\begin{blanko}{Hackney-Robertson-Yau category.}\label{hry-cat}
  Proposition~\ref{convref-refconv} is essentially equivalent to Lemma~5.50 of
  \cite{Hackney-Robertson-Yau}, modulo set-up and terminology.  It
  follows from the proposition (together with Lemma~\ref{convcomp}) that we can
  obtain a subcategory of $\widetilde \Gr$ by making the following restriction
  on the maps: allow only maps whose free part is a convex open inclusion.  This
  is the Hackney-Robertson-Yau category $\Gamma$ of connected acyclic graphs
  \cite{Hackney-Robertson-Yau}.
  
  Note that the refinement/convex factorisations that exist in $\Gamma$ by
  construction are unique up to {\em unique} isomorphism, simply because convex
  open inclusions are mono\-morphisms.  Hence the class of refinements and the
  class of convex open inclusions form an orthogonal factorisation system in
  $\Gamma$.
\end{blanko}

\section{Hypergraphs}

In a nutshell, the idea is this: the free-properad monad applied to a
(bi-collection represented by a) graph $X$ is a bi-collection which
is not again a graph.  Nevertheless, intuitively it should be represented by a
diagram
\begin{equation}\label{eq:monadformula}
\xymatrix @! {
A & \ar[l] \et^1 (X) \ar[r] & \et (X) & \ar[l] \et_1 (X) \ar[r] & A ,
}
\end{equation}
where $\et(X)$ consists of etale maps from graphs to $X$, and $\et^1 (X)$
(resp.~$\et_1 (X)$) consists of etale maps to $X$ with a marked import
(resp.~export).  In other words, the monad promotes all `subgraphs' to being
nodes in their own right.  With this proliferation of nodes, it is no longer
true that an edge is incoming (or outgoing) of at most one node; in other
words, the injectivity axiom is violated and the new structure is no longer a
graph.  The intuition is that it is instead a {\em directed hypergraph}.  To
formalise these ideas, one further ingredient is needed, namely to use groupoids
to correctly deal with automorphisms of etale coverings (deck transformations): for
the statement to be correct we must use {\em groupoid-enriched} hypergraphs.
Specifically, we need $\et(X)$ to be the {\em groupoid} of all etale maps to
$X$, not just the set of iso-classes of such.

The main result of this section, Theorem~\ref{thm:HGr}, states that
{\em
  the free properad on a hypergraph is again a hypergraph (given by 
  \eqref{eq:monadformula})}.

\subsection{Discrete hypergraphs}

The theory of hypergraphs is a extensive research topic, with a variety of
different
applications in computer science.  A standard text book
on hypergraphs is Berge~\cite{Berge:1989}.  For the notion of directed
hypergraph, a classical reference is \cite{Gallo-Longo-Pallottino-Nguyen}.
Here we take a novel approach to
directed hypergraphs, englobing naturally the theory of directed graphs above.

\begin{blanko}{Directed hypergraphs.}\label{hyp}
  A {\em directed hypergraph} is a diagram of sets
  $$
  \xymatrix{
  A & \ar[l]_s I \ar[r]^p & N & \ar[l]_q O \ar[r]^t & A
  }
  $$
  for which both $I \to A \times N$ and $O \to N \times A$ are relations
  (i.e.~are injective maps). 
  The elements in $N$ are called {\em nodes}; 
  the elements in $A$ are called {\em hyperedges}.  A hyperedge connects 
  one set of nodes to another set of nodes.
  A directed hypergraph can be represented by two incidence matrices between
  nodes and hyperedges. 
  A directed hypergraph is called {\em loopfree} if
  these two relations are disjoint, i.e.~if also $I+O \to A \times N$ is a 
  relation.  (This simply means that a hyperedge cannot contain
  the same node in its domain and in its codomain.)  Loopfree hypergraphs 
  are what are called hypergraphs in \cite{Gallo-Longo-Pallottino-Nguyen},
  where they are encoded as a single signed incidence matrix.
  For the present purposes it is essential to allow loops.
  
  From now on we simply say {\em hypergraph} for directed hypergraph.

  A graph is a hypergraph, since a diagram~\eqref{eq:graph} clearly
  satisfies the conditions: if $s$ and $t$ are themselves injective, clearly
  the two spans are relations. 
  For the present purposes, a fruitful interpretation of the hypergraph 
  axiom, is that a hypergraph is {\em locally} a graph, in the sense that
  for each node $x$, the
  maps $I_x \to A$ and $O_x\to A$ are injective.  In fact conversely,
  if for every node $x$ the maps $I_x \to A$ and $O_x\to A$ are injective
  then $X$ is a hypergraph.  Indeed, $I = \sum_{x\in N} I_x$ and $A\times N
  = \sum_{x\in N} A$, and the map $I \to A\times N$ is just the sum of all
  the maps $I_x \to A$.  Similarly for $O$.

  The notion of hypergraph has a self-duality, in the sense that interchanging
  the role of nodes and hyperedges yields again a hypergraph.  However, the study
  of hypergraphs is biased, so that all notions are geared towards the embedding
  of graphs inherent in the choice of symbols.
  With the asymmetry in mind, we define classes of morphisms as follows.  A
  morphism is a diagram like \eqref{eq:map}; it is {\em etale} if the middle
  squares are pullbacks, and an {\em open inclusion} if it is level-wise
  injective and etale.  (Note that according to this definition, etale maps are
  arity-preserving for nodes, but not necessarily on hyperedges.)

  Observe that we have not required the sets to be finite.  This is 
  because the free \mbox{properad} on a graph may be an infinite hypergraph
  (just as the free category on a (closed) directed graph may be an infinite category).
  The relevant finiteness condition is just that $p$ and $q$ be finite maps.
  These are called hypergraphs of {\em finite type}.  Henceforth we only consider
  hypergraphs of finite type.
\end{blanko}

\begin{blanko}{The core of a hypergraph.}
  The {\em core} of a hypergraph $X$, denoted by $X^\bullet$, is the (possibly
  infinite) closed graph $O\times_A I \rightrightarrows N$ given by the diagram
  $$
\xymatrix@!@-30pt{&& \ar[ld] O \times_A I \ar[rd] && \\
& \ar[ld]_q O \ar[rd]^t && \ar[ld]_s I \ar[rd]^p & \\
N & & A & & N   .
}
$$
Just as for graphs, we have
\begin{prop}
  Taking core is right adjoint to the inclusion of (possibly infinite) closed graphs into 
  hypergraphs.
\end{prop}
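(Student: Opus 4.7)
The plan is to mimic almost verbatim the corresponding argument given earlier for graphs. I will exhibit, for each closed graph $Y$ and hypergraph $X$, a natural bijection
$$\Hom(Y, X) \cong \Hom(Y, X^\bullet),$$
with the left-hand side taken in hypergraphs (where $Y$ is regarded as a hypergraph via the canonical embedding) and the right-hand side in closed graphs. The counit is the canonical map $\epsilon: X^\bullet \to X$ visible in the defining diagram of the core, while the unit is the identity (since $X^\bullet = X$ when $X$ is already closed).

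The key observation is that when a closed graph $Y$ is regarded as a hypergraph, its outer structure maps $s_Y$ and $t_Y$ are identities. Consequently, for any hypergraph morphism $f: Y \to X$ with components $\alpha, \iota, \nu, \omega$, every edge $a \in A_Y$ acquires a canonical pair $(\omega(a), \iota(a)) \in O_X \times I_X$ satisfying $t_X(\omega(a)) = \alpha(a) = s_X(\iota(a))$. The universal property of the pullback $O_X \times_{A_X} I_X$ then yields a unique edge component $A_Y \to O_X \times_{A_X} I_X$, which together with $\nu$ assembles into a morphism of closed graphs $Y \to X^\bullet$.

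Conversely, given a closed-graph morphism $g: Y \to X^\bullet$, post-composition with $\epsilon$ produces a hypergraph morphism $Y \to X$. The two assignments are mutually inverse: one round trip is forced by the uniqueness clause of the pullback property, and the other is immediate since $\epsilon$ is the identity on nodes and the tautological pullback projection on flags. Functoriality of the core and naturality of the bijection in both variables are then routine diagram chases.

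I do not foresee any serious obstacle. In particular, no finiteness condition needs checking: the whole purpose of the phrase \emph{(possibly infinite) closed graphs} is to accommodate the fact that the pullback $O \times_A I$ may fail to be finite even when the hypergraph $X$ is of finite type. The only mild difference compared to the graph case is that $s$ and $t$ in a hypergraph are not themselves injective, but this plays no role in the argument, which uses only their presence and the universal property of the pullback formed from them.
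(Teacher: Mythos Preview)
Your proposal is correct and follows essentially the same approach as the paper, which simply states that the result follows ``just as for graphs'' from the universal property of the pullback defining $O\times_A I$; you have merely spelled out the details of that verification. The paper gives no further argument beyond this remark, so there is nothing substantive to compare.
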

While for $X$ a graph, the core amounts to deleting all ports, for $X$ a general
hypergraph, taking core involves furthermore replacing every hyperedge with a 
number of edges, one for each connection it realises.  The counit is not in
general injective.
\end{blanko}
\begin{blanko}{Inner edges of a hypergraph.}
  The set of {\em inner edges} of a hypergraph is by definition the set of edges of its core.
  Hence the set of inner edges is $O\times_A I$.  Note that the canonical map
  $O\times_A I \to A$ from inner edges to hyperedges is not in general 
  injective.
\end{blanko}

\begin{blanko}{Dual embedding of digraphs.}\label{dual}
  When directed graphs in the classical sense (pre\-sheaves on
  $\bullet\!\leftleftarrows\!\bullet$) are used as the structures that
  underlie or generate categories, the nodes play the role of objects (and are
  not modified by the free-category monad) and the edges generate the arrows.
  This is in contrast with the free-operad and free-properad monads, where the
  edges are left unmodified, and the nodes generate the operations.  The
  contrast is accounted for elegantly  by hypergraphs: there is a dual embedding of
  (possibly infinite) 
  classical directed graphs into hypergraphs, sending $E \rightrightarrows V$ to
  the hypergraph (of finite type)
  $$
  \xymatrix{
  V & \ar[l]_s E \ar[r]^= & E &\ar[l]_= E \ar[r]^t & V .
  }$$
  In other words, it interprets edges (arrows) as nodes (operations), and interprets
  vertices as hyperedges.
\end{blanko}

\begin{prop}
  This dual embedding has a right adjoint, sending a hypergraph $AINOA$ to
  $$
  I\times_N O \rightrightarrows A .
  $$
\end{prop}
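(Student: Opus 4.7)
The plan is to exhibit the adjunction by a direct bijection on hom-sets, which ultimately reduces to the universal property of a pullback. Write $D$ for the dual embedding and $R$ for the candidate right adjoint sending the hypergraph $H = (A\leftarrow I \to N \leftarrow O \to A)$ to the closed graph $(I \times_N O \rightrightarrows A)$, whose two structure maps are $s\circ \mathrm{pr}_I$ and $t\circ \mathrm{pr}_O$. Given a closed graph $G = (E \rightrightarrows V)$ with source and target $s_E, t_E: E \to V$, I need to produce a natural bijection
\[
  \Hom_{\kat{HGr}}(D(G), H) \;\simeq\; \Hom(G, R(H)).
\]

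Unpacking the left-hand side, a morphism $D(G) \to H$ is a commutative five-term diagram given by data $\alpha: V \to A$, $\beta: E \to I$, $\gamma: E \to N$, $\delta: E \to O$. Since in $D(G)$ both node-flag maps are the identity on $E$, the two middle squares read $\gamma = p\circ\beta = q\circ\delta$; so $\gamma$ is forced, and the compatible pair $(\beta,\delta)$ induces a unique map $\phi: E \to I \times_N O$ by the universal property of the pullback. The two outer squares then translate to $s\circ\beta = \alpha\circ s_E$ and $t\circ\delta = \alpha\circ t_E$, which is precisely the condition that $(\alpha, \phi)$ be a morphism of closed graphs from $G$ to $R(H)$. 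The inverse assignment is obvious: given $(\alpha, \phi)$, set $\beta := \mathrm{pr}_I \circ \phi$, $\delta := \mathrm{pr}_O \circ \phi$, $\gamma := p\circ\beta$.

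Naturality in $G$ and $H$ is immediate, since every step of the correspondence commutes with pre- and post-composition on either side, so the bijection assembles into the claimed adjunction $D \dashv R$. The one verification worth flagging, rather than a genuine obstacle, is that $D(G)$ genuinely satisfies the hypergraph injectivity axiom so that $D$ lands in $\kat{HGr}$ in the first place; this is clear, since with $p$ and $q$ identities, the spans $I \to A \times N$ and $O \to N \times A$ are injective via their second coordinate. No hard step arises; the whole content of the proposition is the universal property of the pullback packaged into a five-term diagram.
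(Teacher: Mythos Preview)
Your proof is correct. The paper states this proposition without proof, so there is nothing to compare against; your direct verification via the hom-set bijection, reducing to the universal property of the pullback $I\times_N O$, is exactly the expected argument and is carried out cleanly.

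One trivial slip: when you check the hypergraph axiom for $D(G)$, you say both spans are injective ``via their second coordinate''. For $I \to A\times N$ this is right (the $N$-component is the identity on $E$), but for $O \to N\times A$ it is the \emph{first} coordinate (again the $N$-component) that is the identity. The conclusion is of course unaffected.
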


\begin{blanko}{Sums, connectedness, acyclicity, loops.}
  The notions of connectedness and acyclicity are defined in the same way for
  hypergraphs as for graphs, but do not play an important role for the present
  purposes, as the free properad on a connected acyclic graph is a hypergraph 
  which may be neither connected nor acyclic.  (Specifically, if a graph $X$
  has no ports, then $\ov X$ will contain a corresponding isolated node, while
  each edge in a graph $X$ will become a node in $\ov X$ with that same
  edge as a loop.)
\end{blanko}

\begin{blanko}{Canonical neighbourhood and canonical cover.}
  The notions of canonical neighbourhood of a node and canonical etale cover
  are the same for hypergraphs as for graphs:
  given a subset of nodes $N'\subset N$, we construct
$$
\xymatrix{
I'+O' \ar[d]_\alpha& \ar[l] I'\ar[d] \drpullback\ar[r] & N'\ar[d] & 
\ar[l]\dlpullback O'\ar[d] 
\ar[r] & I'+O'\ar[d]^\alpha \\
A & \ar[l] I \ar[r] & N & \ar[l] O \ar[r] & A
}
$$
This is clearly a disjoint union of corollas, each of which is the canonical 
etale neighbourhood of a node in $X$.
For $N'=N$, these jointly cover $X$.  It is the {\em canonical etale cover}.
\end{blanko}

\begin{prop}
    The category of hypergraphs and etale maps
    admits pushouts and coequalisers of shrub injections.
\end{prop}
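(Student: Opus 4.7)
The approach is to imitate the argument of Proposition~\ref{prop:et+bijnodes}, working in the ambient category $\kat{Diag}$ of diagrams of shape $A \leftarrow I \to N \leftarrow O \to A$ with no axioms imposed. This ambient category has all small colimits, computed levelwise, and hypergraphs form a reflective subcategory (the reflection replaces $I$ and $O$ by their images in $A\times N$ and $N\times A$). The plan is, in each case, to form the colimit levelwise in $\kat{Diag}$, then check (i) that the hypergraph axiom is automatic, so no reflection is needed, and (ii) that the canonical maps out of the input are etale.

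For a pushout along a shrub injection, i.e.~a span $G \hookleftarrow S \hookrightarrow G'$ with $S$ a shrub and both legs injective: since $S$ has empty flag and node sets, the levelwise pushout is the diagram
$$
A +_S A' \ \leftarrow\ I + I' \ \to\ N + N' \ \leftarrow\ O + O'\ \to\ A +_S A'.
$$
The fibres of $I+I' \to N + N'$ split as the fibres of $I\to N$ over $N$ and those of $I'\to N'$ over $N'$, and the canonical maps $A\to A+_S A'$ and $A'\to A+_S A'$ are injective because the pushout is along injections. Hence each fibre embeds into $A +_S A'$, giving the hypergraph relation axiom for the pushout. The axiom for $O$ is identical. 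The middle squares are pullbacks in $\kat{Diag}$ by inspection (nothing is added to $I$, $N$, or $O$ coming from $G$ or $G'$), so the insertion maps $G \to Q$ and $G' \to Q$ are etale.

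For a coequaliser of a parallel pair of shrub injections $s_1, s_2 : S \rightrightarrows G$, the levelwise coequaliser leaves $I$, $N$, $O$ unchanged and replaces $A$ by $A/\!\sim$, where $\sim$ is generated by $s_1(e)\sim s_2(e)$ for $e\in S$. The hypergraph axiom to verify is injectivity of $I \to (A/\!\sim) \times N$, i.e.~for $f,f'\in I_x$ in the same fibre, $s(f) \sim s(f')$ forces $s(f) = s(f')$. This is where the hypergraph case is easier than the graph case: we need only preserve injectivity on fibres over each node, rather than global injectivity of $s$ and $t$, and the latter is a condition that is local to each node — the identifications in $A$ prescribed by $\sim$ simply must not collide two distinct incoming or outgoing edges of a single node, which is the appropriate mild compatibility assumption on the shrub injections (paralleling the role of import/export-preserving in the graph definition of gluing datum). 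Once this is observed, the levelwise construction already lands in hypergraphs, and the middle squares remain pullbacks since $I$, $N$, $O$, $p$, $q$ are all untouched, so the quotient map $G\to Q$ is etale.

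The step I expect to be the main obstacle is precisely the coequaliser verification above, namely pinning down the correct local compatibility condition that guarantees the levelwise quotient satisfies the relation axiom without needing a further flag-identifying reflection (which would destroy etaleness). After that, the etale property of the quotient is almost immediate from the fact that flags and nodes are inherited unchanged from $G$.
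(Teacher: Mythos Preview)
Your pushout argument is correct and is the fibrewise phrasing of the paper's own argument: the paper checks the single injection $I+I' \hookrightarrow (A\times N)+(A'\times N') \hookrightarrow (A+_S A')\times(N+N')$, the second map being injective because $N$ and $N'$ are disjoint in $N+N'$, which is exactly your observation that each fibre $(I+I')_x$ sits inside one of $A$ or $A'$, and these embed in $A+_S A'$. You go slightly further than the paper in noting explicitly that the insertion maps are etale.

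For coequalisers you have correctly located a genuine issue that the paper dismisses with ``the case of coequalisers being analogous''. Without the compatibility condition you allude to, the levelwise coequaliser need not be a hypergraph: take $G=C^2_0$ with $A=\{a,b\}$, $I=\{a,b\}$, $N=\{x\}$, $O=\emptyset$, and let the two shrub injections from $U$ pick out $a$ and $b$ respectively; then $A/\!\!\sim$ is a singleton while $I_x$ still has two elements, so $I_x\to A/\!\!\sim$ fails to be injective, and reflecting would collapse $I_x$ and destroy etaleness of the quotient map. The precise condition you want is that for each node $x$ the quotient map $A\to A/\!\!\sim$ be injective on $s(I_x)$ and on $t(O_x)$, i.e.\ the identifications never merge two distinct incoming (resp.\ outgoing) edges of a single node. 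This is strictly weaker than the import/export-preserving requirement in the graph case, and in the pushout situation it holds for free because the two graphs contribute disjoint node sets; this is what the paper's remark ``one can glue any hyperedge to any other hyperedge'' is really about. For the paper's application (the density lemma~\ref{hyper-density}) the coequalisers that arise glue edges of distinct corollas, so the condition is automatic there.
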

Note that contrary to the case of graphs, there are no import-export conditions:
one can glue any hyperedge to any other hyperedge.

\begin{proof}
    In the presheaf category of diagrams of shape $AINOA$,
    pushouts and coequalisers
    are computed level-wise.  It is enough to prove that the results
    are hypergraphs again.  We do pushouts, the case of coequalisers
    being analogous.  Given hypergraphs $AINOA$
    and $A'I'N'O'A'$, and maps from the shrub $S000S$,
    the hyperedge set of the pushout is the amalgamated sum 
    $A +_S A'$. We need to show that $I+I' \to 
    (A+_S A' )\times (N+N')$ is injective (and similarly with $O+O'$).
    Since the originals are hypergraphs, we have $I+I' \into (A\times 
    N) + (A'\times N')$.  
    But we also have $(A\times N) + (A'\times N') \into (A+_SA') \times (N+N')$ 
    by the distributive law (since $N$ and $N'$ are disjoint in $N+N'$),
    as seen in this figure:
    \begin{center}
	\begin{texdraw}\setunitscale 0.9
    \move (0 0) 
    \bsegment
    \bsegment 
      \move (0 0) \lvec (30 0) \lvec (30 35) \lvec (0 35) \lvec (0 0) 
      \htext (15 16.5){\footnotesize $A\!\times\!N$}
    \esegment
    \move (20 35)
    \bsegment 
      \move (0 0) \lvec (40 0) \lvec (40 20) \lvec (0 20) \lvec (0 0)
      \htext (20 10){\footnotesize $A'\!\times\!N'$}  
    \esegment
          \esegment
	  \move (0 0) \lvec (60 0) \lvec (60 55) \lvec (0 55) \lvec (0 0)
      \htext (30 -7){\footnotesize $A+_S A'$}
      \htext (-20 27.5){\footnotesize $N+N'$}
\end{texdraw}
\end{center}
\end{proof}

\begin{blanko}{Elements.}
  The {\em category of elements} of a hypergraph $X$ is defined exactly as for 
  graphs:
  $$
  \el(X) := \elGr \comma X ,
  $$
  the category of elements of the presheaf
  \begin{eqnarray*}
    \elGr\op & \longrightarrow & \Set  \\
    E & \longmapsto & \Hom(E,X) .
  \end{eqnarray*}
  Just as in the case of graphs, $\el(X)$ is naturally equivalent to the 
  category whose object set is $A+N$ and whose set of non-idenity arrows is
  $I+O$.  Again there is a canonical functor $\el(X) \to \Gr \subset \HGr$
  given by sending each $A$-object to the unit graph $U$, and sending each object
  $x\in N$ to the canonical neighbourhood of $x$.
\end{blanko}

\begin{lemma}\label{hyper-density}
  Every hypergraph $X$ is a colimit of its elements (which are elementary
  graphs).  Precisely,
  $$
  X = \colim \big( \el(X) \to \Gr \subset \HGr \big).
  $$
\end{lemma}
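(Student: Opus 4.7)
The plan is to compute the colimit levelwise in the ambient category of diagrams of shape $AINOA$ and verify that the result is $X$ itself, which is a hypergraph and hence also the colimit in $\HGr$.

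First I would unpack the diagram $\el(X) \to \HGr$ explicitly. Its objects are the elements of $A + N$: each hyperedge $a \in A$ is sent to the unit graph $U$, and each node $x \in N$ to its canonical neighbourhood (the corolla $C^{|I_x|}_{|O_x|}$). Each in-flag $f \in I$ gives an arrow in $\el(X)$ from $s(f) \in A$ to $p(f) \in N$, realised as the morphism $U \to C^{|I_{p(f)}|}_{|O_{p(f)}|}$ selecting the $f$-th in-edge; symmetrically for $g \in O$.

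Next I would compute the levelwise colimit. The node component receives contributions only from the corollas, and no arrow in $\el(X)$ identifies nodes of distinct corollas, giving $N_\infty = N$ tautologically. The in-flag component likewise only receives contributions from the corollas, and one reads off $I_\infty = \sum_{x \in N} I_x = I$; similarly $O_\infty = O$. At the edge level, the unit graphs contribute the set $A$, and each corolla at $x$ contributes the disjoint union $I_x + O_x$; the arrows of $\el(X)$ identify the $f$-edge of the corolla at $p(f)$ with $s(f) \in A$, and the $g$-edge of the corolla at $q(g)$ with $t(g) \in A$. Since in a corolla the in-edges and out-edges are disjoint, no edge of a corolla is identified with two different elements of $A$, and there are no arrows in $\el(X)$ forcing identifications between distinct elements of $A$. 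Hence the quotient collapses the flag contributions into $A$ without further identification, and $A_\infty = A$.

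By construction the structure maps of this colimit agree with $s, p, q, t$, so the colimit computed in the presheaf category coincides with $X$ on the nose; in particular it satisfies the hypergraph axiom and thus is also the colimit in $\HGr$. The step that requires the most care is verifying the absence of spurious identifications among elements of $A$: this is exactly the content of the hypergraph axiom that $I + O \to A \times N$ is a relation locally at each node (so in-flags and out-flags of the same node occupy different edges of its canonical corolla), and I would emphasise that this is where the defining axiom of a hypergraph is used. The argument then runs in close parallel with the colimit formula \eqref{colim} for ordinary graphs.
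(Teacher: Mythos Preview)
Your proof is correct and more explicit than the paper's, which simply notes that the colimit can be computed as an iterated coequaliser over edges, invoking the preceding Proposition that $\HGr$ admits coequalisers of shrub injections. Your approach computes the levelwise colimit in the ambient diagram category directly; the paper's approach stays inside $\HGr$ throughout by gluing one hyperedge at a time. Both reach the same conclusion, and yours has the virtue of making the bookkeeping visible.

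Two small points. First, your remark that the hypergraph axiom is what prevents spurious identifications in $A$ is not quite right: each element of $\sum_x (I_x + O_x)$ is identified with exactly one element of $A$ (namely $s(f)$ or $t(g)$) regardless of any axiom on $X$, and since all arrows in $\el(X)$ go from $A$-objects to $N$-objects, no zigzag can relate two distinct elements of $A$. (Also, what you wrote --- that $I+O \to A\times N$ is a relation --- is the \emph{loopfree} condition, not the hypergraph axiom; and the parenthetical about in- and out-edges being disjoint in the corolla is automatic from the disjoint union $I_x + O_x$.) The hypergraph axiom enters only to ensure that $X$ is an object of $\HGr$, so that the statement is meaningful.

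Second, the passage from ``colimit in the presheaf category'' to ``colimit in $\HGr$'' deserves one more sentence: given an \emph{etale} cocone to some hypergraph $Y$, the induced map $X \to Y$ must be shown etale. This follows because each node $x$ of $X$ is covered by its canonical corolla $C_x$, and the composite $C_x \to X \to Y$ is the given etale leg of the cocone, so the fibre condition at $x$ is inherited. The paper's iterated-coequaliser argument sidesteps this issue by never leaving $\HGr$.
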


\begin{proof}
  The colimit can be computed as an iterated coequaliser over edges.
\end{proof}

\subsection{Groupoid-enriched hypergraphs}

\begin{blanko}{Groupoids.}
  We shall freely use basic facts about groupoids, and in particular the
  consistent homotopy approach.  See
  G\'alvez-Kock-Tonks~\cite{GalvezCarrillo-Kock-Tonks:1207.6404}, Section 3,
  where there is some introduction.  The important feature is that all notions
  are up-to-homotopy: in particular, by commutative square is meant a square
  with a specified $2$-cell (a homotopy), and pullback means homotopy pullback.
  If the bottom corner in a commutative square is just a set,
  then `commutative' has its usual meaning and homotopy pullback is the same
  thing as ordinary pullback.  A special case of homotopy pullback is homotopy
  fibre, which is (homotopy) pullback to a point.  We shall also need homotopy quotients
  (also called action groupoid or semi-direct product), in the situation where a
  group acts on a set or on a groupoid: where the naive quotient identifies $x$
  with $x.g$, the homotopy quotient rather sews in a path between $x$ and $x.g$.
  The naive quotient is obtained by taking $\pi_0$ of the homotopy quotient.
  
  From now on, all pullbacks, fibres, quotients, (and more generally) limits 
  and colimits refer to the homotopy notions.
\end{blanko}

\begin{blanko}{Groupoid-enriched hypergraphs.}
  A {\em groupoid-enriched hypergraph} is a diagram of groupoids
  $$
  \xymatrix{
  A & \ar[l]_s I \ar[r]^p & N & \ar[l]_q O \ar[r]^t & A
  }
  $$
  satisfying the following three conditions.
  \begin{enumerate}
    \item $A$, $I$ and $O$ are discrete (i.e.~equivalent to sets), and $N$ is 
    locally finite (i.e.~has finite vertex groups).
  
    \item $I \to N \leftarrow O$ are discrete fibrations.  (The hypergraph is 
    called of finite type if these fibres are finite.  This will always be 
    assumed below.)
  
    \item Both $I \to A \times N$ and $O \to N \times A$ are monomorphisms.
  \end{enumerate}

  Observe that from condition 1 and 2 we have the diagram
  $$\xymatrix{
     I \ar[d]\ar[rd]^{\text{discr.fib}} &  \\
     A\times N \ar[r]_{\text{discr.fib}} & N
  }$$
  and therefore also $I \to A\times N$ is automatically a discrete fibration,
  i.e.~has discrete (homotopy) fibres.  Condition 3 says that these discrete fibres are
  either singleton or empty.
  
  Observe that since $I$ (resp.~$O$) is discrete, and also the fibres $I_x$ 
  (resp.~$O_x$) are discrete, the map $I\to N$ (resp.~$O\to N$) is in fact
  a disjoint union of torsors.  More precisely, for each $x\in N$, 
  the vertex group $\Aut_N(x)$ acts freely on $I_x$ (resp.~on $O_x$). This means
  that locally at each node $x$, the hypergraph is a `stacky corolla', as 
  detailed below.
\end{blanko}

\begin{blanko}{Etale maps.}
  An {\em etale map} of groupoid-enriched hypergraphs is a diagram
  $$
  \xymatrix{
  A' \ar[d]_\alpha& \ar[l] I'\drpullback \ar[d] \ar[r] & N'\ar[d] & \ar[l] 
  \dlpullback O'\ar[d] 
  \ar[r] & A'\ar[d]^\alpha \\
  A & \ar[l] I \ar[r] & N & \ar[l] O \ar[r] & A
  }   
  $$
  in which the middle squares are (homotopy) pullbacks.  We denote by $\HGr$ the
  category of (groupoid-enriched, finite-type) hypergraphs and etale maps.
\end{blanko}

\begin{blanko}{Stacky corollas.}
  Consider a corolla $C^m_n$:
  $$
  m+n \leftarrow m \to 1 \leftarrow n\to m+n
  $$
  and suppose that a finite group $G$ acts freely on $m$ and freely on $n$.
  Then we can form the levelwise homotopy quotient, which receives an etale map from 
  $C^m_n$:
  $$\xymatrix{
     m+n \ar[d] & \ar[l] m \drpullback \ar[r]\ar[d] & 1 \ar[d] & \ar[l] n 
     \dlpullback \ar[d]\ar[r] & 
     m+n \ar[d] \\
     \frac{m+n}{G}  & \ar[l] \frac{m}{G} \ar[r] & \frac{1}{G} & \ar[l] 
     \frac{n}{G} \ar[r] & \frac{m+n}{G}
  }$$
  The result is a hypergraph $C^m_n/G$ called a {\em stacky corolla}.
  It arises as the homotopy coequaliser 
  $$
  C^m_n \times \un G \rightrightarrows C^m_n \to C^m_n/G,
  $$
  where $\un G$ denotes the discrete set of elements in the group $G$,
  and $C^m_n \times \un G$ denotes the disjoint union of that many
  copies of $C^m_n$.
\end{blanko}

\begin{blanko}{Hypergraphs as colimits of elementary graphs.}\label{denshyp}
  We need the groupoid version of the result that {\em every hypergraph $X$ is
  the colimit of its elements}.  This is true again: the category of elements
  $\elGr\comma X$ can be described explicitly as having object set $A +
  \operatorname{obj}(N)$ and arrow set $I+O+\operatorname{arr}(N)$: an arrow
  $f\in I$ has domain $s(f)$ and codomain $p(f) \in \operatorname{obj}(N)$,
  while an arrow $g\in O$ has domain $t(g)$ and codomain $q(g) \in
  \operatorname{obj}(N)$.  The category of elements is the domain of a canonical
  diagram $\el(X)\to\HGr$, whose colimit is $X$.  This colimit can be computed
  as an iterated {\em strict} coequaliser of stacky corollas over shrubs, and
  then the stacky corollas in turn are {\em homotopy} quotients as above.
\end{blanko}

Let $\operatorname{cor}(X) := \Cor\comma X$ denote the groupoid of etale maps
from corollas into $X$.  Let $\operatorname{cor}^1(X)$ denote the groupoid of
such maps but with a marked import, and let $\operatorname{cor}_1(X)$ denote the
groupoid of such maps but with a marked export.  (Compare \ref{barhyp}.)

\begin{lemma}
  For $X$ a hypergraph $AINOA$, there is a natural equivalence of groupoids 
  $$
  \operatorname{cor}(X) \simeq N
  $$
  Similarly, there are natural equivalences $\operatorname{cor}^1(X) 
  \simeq I$
  and $\operatorname{cor}_1(X) \simeq O$, as well
  as a bijection $\operatorname{hedge}(X) := \Map(U,X) \simeq A$.
  Altogether, $X$ is equivalent to
  $$
  \xymatrix @! {
   \operatorname{hedge}(X)& \ar[l] \operatorname{cor}^1 (X) \ar[r] & \operatorname{cor} (X) &
  \ar[l] \operatorname{cor}_1 (X) \ar[r] & \operatorname{hedge}(X) .
  }
  $$
\end{lemma}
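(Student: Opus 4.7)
The plan is to verify the four equivalences separately and then assemble the diagram, relying throughout on three key facts from the setup: $A$, $I$, $O$ are discrete groupoids; $I \to N$ and $O \to N$ are discrete fibrations with finite fibres; and each $\Aut_N(x)$ acts freely on $I_x$ and $O_x$.

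For hyperedges I would simply unpack the definition: a map $U \to X$ in $\HGr$ consists of a choice of $a \in A$ together with vacuous data elsewhere, and the etale condition is automatically satisfied (the middle squares involve $0$'s). Since $A$ is discrete, $\Map(U,X)$ coincides with the set $A$.

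For the corolla case $\operatorname{cor}(X) \simeq N$ I would construct the forgetful functor $\Phi : \operatorname{cor}(X) \to N$ sending an etale map $\alpha: C^m_n \to X$ to the image $x$ of the unique node of $C^m_n$, and show $\Phi$ is essentially surjective and fully faithful. For essential surjectivity, given $x \in N$, set $m := |I_x|$ and $n := |O_x|$ (finite, by finite-type) and choose any bijections $m \isopil I_x$ and $n \isopil O_x$; these determine an etale map $C^m_n \to X$ whose edge map $m+n\to A$ is forced by postcomposition with $s$ and $t$. For fully-faithfulness, a morphism $\alpha \to \alpha'$ in $\operatorname{cor}(X)$ is a triple $(\sigma,\tau,\eta)$ with $(\sigma,\tau) \in S_m \times S_n$ and $\eta: x \Rightarrow x'$ a $2$-cell in $N$, making all flag data compatible; the free-action hypothesis forces $(\sigma,\tau)$ to be uniquely determined by $\eta$ and the chosen bijections, so the induced map $\Hom_{\operatorname{cor}(X)}(\alpha,\alpha') \to \Hom_N(x,x')$ is a bijection. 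For the flagged variants the same calculation specialises: a marked input (resp.\ output) of the corolla, transported across $m\isopil I_x$ (resp.\ $n\isopil O_x$), is precisely an element of $I_x$ (resp.\ $O_x$), yielding $\operatorname{cor}^1(X) \simeq I$ and $\operatorname{cor}_1(X) \simeq O$.

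To assemble the diagram I would check that under these equivalences the four structure maps correspond as claimed: reading off the hyperedge at the marked import of an element of $\operatorname{cor}^1(X)$ becomes $s: I \to A$; forgetting the marking becomes $p: I \to N$; and dually for $q$ and $t$. The main obstacle I anticipate is careful bookkeeping of $2$-cells in the homotopy setting during the fully-faithful step: one must verify that morphisms in $\operatorname{cor}(X)$ carry genuine $2$-cell witnesses (not strict identifications), and that the freeness of the $\Aut_N(x)$-action on $I_x$ and $O_x$ is exactly what rules out spurious automorphisms and pins down the bijection of hom-groupoids on the nose.
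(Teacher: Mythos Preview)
Your proof is correct and takes essentially the same route as the paper: construct the canonical-neighbourhood etale map from each $x\in N$ for essential surjectivity, then argue that the automorphisms of such a map are exactly $\Aut_N(x)$ (the paper phrases this as ``no contribution from the $I$ and $O$ levels since these are discrete'', which is equivalent to your free-action formulation). You are in fact more thorough than the paper, which only spells out $\operatorname{cor}(X)\simeq N$ and declares the remaining equivalences and the compatibility with the structure maps ``straightforward''.
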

\begin{proof}
  Given $x\in N$ we pull back (and use sum injections) to get an etale map
    $$\xymatrix{
     m+n \ar[d] & \ar[l] m \drpullback \ar[r]\ar[d] & 1 \ar[d] & \ar[l] 
     n
     \dlpullback \ar[d]\ar[r] & 
     m+n \ar[d] \\
     A  & \ar[l] I \ar[r] & N & \ar[l] 
     O\ar[r] & A
  }$$
  where $m$ is the cardinality of $I_x$ and $n$ is the cardinality of $O_x$.
  There are $m!$ possible bijections $m\isopil I_x$ and $n!$ possible bijections
  $n \isopil O_x$, but they all yield isomorphic etale maps.  The automorphism
  group of a fixed such map gets no contribution from the $I$ and $O$ level,
  since these are discrete.  The only contribution to automorphisms comes from the
  automorphisms of $x:1 \to N$, and these form precisely the vertex group of
  $x$.  The statements for the remaining sets are straightforward.  Note that
  the first statement is precisely a consequence of the hypergraph axioms.  (In
  fact the condition is {\em equivalent} to the hypergraph axioms!)
\end{proof}

\subsection{The free properad on a groupoid-valued bi-collection}

\label{sec:prpd-grpd}
The constructions in \ref{sec:freeproperad} can be carried out with 
coefficients in groupoids instead of coefficients in sets (and in fact
this is in a sense more natural, as we already used groupoids in the 
constructions, and now avoid taking $\pi_0$ in the end).
This means that we keep the $1$-categories $\elGr$ and $\Gr$, but consider
presheaves with values in $\Grpd$:
$$
\PrSh(\elGr) := \Fun(\elGr\op,\Grpd) .
$$
These could be called prestacks instead of presheaves, as in fact we allow
pseudofunctors, but in keeping with the philosophy that the real thing is
$\infty$-groupoids, where the appropriate weakenings are taken care
of automatically by the formalism, and that $\infty$-groupoids are regarded as
a fancy version of sets, we stick to the presheaf terminology, and refrain
also from going into subtle distinctions between functors and pseudofunctors.
As in the set case we have that presheaves (prestacks) on $\elGr$
are naturally equivalent to sheaves (stacks) on $\Gr$, so that a presheaf
on elementary graphs can be evaluated also on general connected graphs 
by the homotopy limit
formula
$$
F[G] = \lim_{E\in \el(G)} F[E].
$$

Similarly, the definition of the free properad 
\begin{align*}
   \PrSh(\elGr) \ \longrightarrow & \ \PrSh(\elGr) \\  
   F \ \longmapsto & \ \ov F ,
\end{align*}
now uses a homotopy colimit:
\begin{align*}
\ov F[m,n] \ := & \ \colim_{G \in (m,n)\kat{-Gr}_{\iso}} F[G] \\[6pt]
=& \ \sum_{G\in \pi_0((m,n)\kat{-Gr}_{\iso})}  \wfrac{F[G]}{\Aut_{(m,n)}(G)} \\[6pt]
=& \  (m,n)\kat{-Gr}_{\iso} \comma F  .
\end{align*}
Here the double-line denotes the homotopy quotient.

(The resulting functor is a pseudo-monad rather than a strict monad, and the
notion of groupoid-enriched properad should be that of pseudo-algebra for this
pseudo-monad.)

The relationship with the properad monad given in \ref{sec:freeproperad}
is this:
\begin{prop}
  The following diagram commutes.
  $$\xymatrix{
     \Fun(\elGr\op,\Grpd) \ar[r]^-{\ov{( \ )}}\ar[d]_{\pi_0 \circ -} & \Fun(\elGr\op,\Grpd) 
     \ar[d]^{\pi_0 \circ -} \\
     \Fun(\elGr\op,\Set) \ar[r]_-{\ov{( \ )}} & \Fun(\elGr\op,\Set) .
  }$$  
\end{prop}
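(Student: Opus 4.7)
The plan is to reduce the commutativity to the pointwise identity $\pi_0(F[G]) \cong (\pi_0 F)[G]$ of sets for every connected graph $G$, and to verify this via the sheaf representation of both sides. The first step is to observe that $\pi_0 : \Grpd \to \Set$ is left adjoint to the inclusion of sets as discrete groupoids, and hence preserves all colimits. Applied to the homotopy colimit defining $\ov F[m,n]$ this yields
$$
\pi_0\bigl(\ov F[m,n]\bigr) \;=\; \sum_{G \in \pi_0((m,n)\text{-}\Gr_{\iso})} \frac{\pi_0(F[G])}{\Aut_{(m,n)}(G)},
$$
to be compared with the set-level formula of Subsection~\ref{sec:freeproperad},
$$
\ov{\pi_0 F}[m,n] \;=\; \sum_G \frac{(\pi_0 F)[G]}{\Aut_{(m,n)}(G)}.
$$
The unit component is trivial, since the convention $\ov F[\triv]=F[\triv]$ in both flavours gives $\pi_0\ov F[\triv] = \pi_0 F[\triv] = (\pi_0 F)[\triv] = \ov{\pi_0 F}[\triv]$. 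What remains is therefore the pointwise identification.

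For elementary $G$ this holds by the very definition of $\pi_0 F$ as a presheaf on $\elGr$. For a general connected $G$, I would use the sheaf property on both sides. In the groupoid flavour,
$$
F[G] \;=\; \lim_{E \in \el(G)} F[E]
$$
is a homotopy limit of groupoids, while in the set flavour $(\pi_0 F)[G] = \lim_{E\in \el(G)} \pi_0(F[E])$ is the ordinary $1$-categorical limit. What is required is thus precisely that $\pi_0$ commutes with this specific homotopy limit. Invoking the presentation of $G$ as the coequaliser of its canonical cover over the inner edges (\ref{elements}), the diagram collapses to an equaliser
$$
\lim\bigl(\, {\textstyle\prod_{x \in N}} F[C_x] \;\rightrightarrows\; {\textstyle\prod_{e \in O\times_A I}} F[\triv] \,\bigr),
$$
one map taking a tuple of operations to their input-flag colours, the other to their output-flag colours.

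The main technical point, and the only nontrivial step I anticipate, is that $F[\triv]$ is discrete --- parallel to the discreteness of hyperedges built into the definition of groupoid-enriched hypergraph, and reflecting the fact that the colours form a set rather than a stacky groupoid. Granted this, both parallel arrows land in a discrete groupoid; hence the homotopy equaliser agrees with the ordinary strict equaliser of groupoids (a fibre product over a discrete base), and on such an object $\pi_0$ does commute with the limit, as can be checked directly on connected components. The obstacle I foresee is exactly justifying this discreteness of $F[\triv]$ from the set-up --- without it the statement would fail, since $\pi_0$ is a left adjoint and does not preserve limits in general. Once the discreteness is in hand, the identification $\pi_0(F[G]) \simeq (\pi_0 F)[G]$ follows, and combined with the reduction of the first paragraph this completes the proof.
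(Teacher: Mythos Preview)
Your approach is considerably more careful than the paper's. The paper's proof consists of a single sentence: it invokes only the identity $\pi_0(X/\!\!/G)\simeq(\pi_0 X)/G$ for a group $G$ acting on a groupoid $X$, which is exactly your first reduction (phrased there as ``$\pi_0$ preserves colimits''). The paper does not address the second half of your argument at all --- it tacitly treats $\pi_0(F[G])$ and $(\pi_0 F)[G]$ as interchangeable.

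You are right to flag this. The identification $\pi_0\bigl(\lim_{E\in\el(G)}F[E]\bigr)\simeq\lim_{E\in\el(G)}\pi_0 F[E]$ genuinely requires $F[\triv]$ to be discrete, and nothing in the paper's set-up for $\Fun(\elGr\op,\Grpd)$ imposes that. Without it the proposition fails already for $L_2$: take $F[\triv]=BG$ and $F[C^m_n]=1$; then the homotopy equaliser $1\times_{BG}1$ has $\pi_0$ equal to the underlying set of $G$, whereas the set-level limit is a point. So what you identified is not a difficulty in your proof but an unstated hypothesis in the paper's proposition. In the paper's subsequent applications (where $F$ comes from mapping into a discrete hypergraph, so $F[\triv]$ is a set) the issue disappears, which is presumably why the author did not notice it. Your proof, with the discreteness hypothesis added, is the correct one; the paper's proof supplies only the easier half.
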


\begin{proof}
  This follows from the fact that if a group $G$ acts on a groupoid $X$,
  then there is a natural bijection of sets
  $$
  \pi_0(X/\!\!/G) \simeq (\pi_0 X)/G , 
  $$
  where the left-hand side is the homotopy quotient and the right-hand side is the 
  naive quotient.
\end{proof}

This is particularly interesting if $F$ is the functor given by mapping into
a discrete (graph or) hypergraph $X$: in this case
$$
\Map(E,X)  = \pi_0 \Map(E,X) = \Hom(E,X),
$$
so that the commutativity of the square states that in this case the
free-properad monad construction of \ref{sec:freeproperad} actually factors
through the groupoid-enriched version.

\subsection{Free-properad monad on the category of hypergraphs}

Every hypergraph $X$ defines a bi-collection
\begin{eqnarray*}
  X : \elGr\op & \longrightarrow & \Grpd  \\
  E & \longmapsto & \Map_{\HGr}(E,X) ,
\end{eqnarray*}
and can therefore be given as argument to the free-properad monad.
The main result of this section states that
the free properad on a hypergraph is again a hypergraph.
The density lemma \ref{denshyp} is equivalent to:
\begin{cor}
  The natural functor $\HGr \to \PrSh(\elGr)=\Fun(\elGr\op,\Grpd)$ is fully faithful.
\end{cor}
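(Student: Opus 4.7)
The plan is to interpret the functor in question as the nerve associated to the inclusion $i:\elGr\hookrightarrow\HGr$ and then run the standard density argument. First I would note that $i$ is fully faithful: between elementary graphs, etale maps in $\HGr$ are exactly the maps already in $\elGr$ (identities on the unit graph, and permutations of imports and exports on corollas), so for fixed $X,Y\in\HGr$ the only thing to analyse is the tautological comparison
\begin{equation*}
\Map_{\HGr}(X,Y)\ \longrightarrow\ \Map_{\PrSh(\elGr)}\bigl(X|_{\elGr},\ Y|_{\elGr}\bigr).
\end{equation*}

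The key step is to evaluate both sides as the same (homotopy) limit, namely $\lim_{E\in\el(X)} Y[E]$. On the left, invoke the density lemma \ref{hyper-density}, which presents $X$ as the homotopy colimit in $\HGr$ of the canonical diagram $\el(X)\to\HGr$; combined with the definition $Y[E]=\Map_{\HGr}(E,Y)$ this gives
\begin{equation*}
\Map_{\HGr}(X,Y)\ =\ \Map_{\HGr}\bigl(\colim_{E\in\el(X)}E,\ Y\bigr)\ =\ \lim_{E\in\el(X)} Y[E].
\end{equation*}
On the right, the (homotopy) co-Yoneda lemma writes the presheaf $X|_{\elGr}$ as the colimit in $\PrSh(\elGr)$ of the representables $\text{\textyen}(E)$ indexed by $\el(X)$; the ordinary Yoneda lemma then yields
\begin{equation*}
\Map_{\PrSh(\elGr)}\bigl(X|_{\elGr},\ Y|_{\elGr}\bigr)\ =\ \lim_{E\in\el(X)} Y[E].
\end{equation*}
Verifying that the tautological comparison is the identification induced by these two computations is a pure naturality check.

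The main obstacle is homotopy-theoretic bookkeeping in the groupoid-enriched setting. All colimits and limits must be read as homotopy colimits and limits, $\el(X)$ must be treated as the $(2,1)$-categorical comma $\elGr\comma X$, and co-Yoneda must be performed in $\Fun(\elGr\op,\Grpd)$. Lemma \ref{hyper-density} already addresses this: it expresses $X$ as an iterated strict coequaliser of stacky corollas, each of which is itself the homotopy quotient $C^m_n\times\underline G\rightrightarrows C^m_n\to C^m_n/G$, so the strict coequaliser computes the correct homotopy colimit. The two candidate notions of "category of elements" — the comma construction $\elGr\comma X$ in $\HGr$ and the Grothendieck construction applied to the presheaf $X|_{\elGr}$ — agree because $i$ is fully faithful, so $\Map_{\HGr}(E,X)\simeq X|_{\elGr}(E)$ tautologically and the indexing $(2,1)$-categories coincide.
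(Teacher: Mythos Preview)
Your proposal is correct and is precisely the approach the paper has in mind: the paper does not give a separate proof but simply states that the corollary is equivalent to the density lemma~\ref{denshyp}, and you have unwound that equivalence via the standard argument (density of $i:\elGr\hookrightarrow\HGr$ is the same as fully faithfulness of the associated nerve). One small point: the relevant density statement in the groupoid-enriched setting is~\ref{denshyp} rather than~\ref{hyper-density} (the latter is the discrete version), though you have in fact described the content of~\ref{denshyp} correctly.
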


We give the construction of the free-properad monad purely combinatorially,
within the category of hypergraphs, and check that it works.  In other words, we
define an endofunctor on $\HGr$, and check the commutativity of
$$\xymatrix{
   \HGr \ar[r]\ar[d] & \HGr \ar[d] \\
   \PrSh(\elGr) \ar[r] & \PrSh(\elGr) .
}$$
(Since the vertical maps are fully faithful, the top functor acquires monad
structure from the bottom functor.)

\begin{blanko}{Free-properad construction for hypergraphs.}\label{barhyp}
  Given a hypergraph $X$ (with constituents $AINOA$), let $\et(X)$ denote the
  groupoid of etale maps from a graph to $X$, i.e.~diagrams
    $$
\xymatrix{
\cdot \ar[d]& \ar[l] \cdot\ar[d] \ar[r]\drpullback  & \cdot \ar[d] & \dlpullback \ar[l]\cdot\ar[d] 
\ar[r] & \cdot\ar[d] \\
A & \ar[l] I \ar[r] & N & \ar[l] O \ar[r] & A
}
$$
where the first line is a graph.

  Let $\et^1(X)$ denote the groupoid of etale maps from a graph
  but with a marked import $e$.  Formally these are diagrams
  $$
  \xymatrix{
  1 \ar[d]_e& \ar[l] 0\ar[d] \drpullback\ar[r] & 0\ar[d] & \dlpullback \drpullback\ar[l]0\ar[d] 
  \ar[r] & 1\ar[d]^e \\
\cdot \ar[d]& \ar[l] \cdot\ar[d] \ar[r]\drpullback  & \cdot \ar[d] & \dlpullback \ar[l]\cdot\ar[d] 
\ar[r] & \cdot\ar[d] \\
  A & \ar[l] I \ar[r] & N & \ar[l] O \ar[r] & A
  }
  $$
  where the middle line is a graph.
  Similarly, $\et_1(X)$ is the groupoid etale maps from a graph
  but with a marked export.

  It is clear that these groupoids assemble into a diagram $\ov X$
  $$
  \xymatrix @! {
  A & \ar[l] \et^1 X \ar[r] & \et X & \ar[l] \et_1 X \ar[r] & A ,
  }
  $$
  where the structure maps delete appropriate rows of the diagrams representing
  $\et^1 X$ and $\et_1 X$.
\end{blanko}

\begin{lemma}
  If $X$ is a hypergraph, then so is $\ov X$, and this assignment
  is the object part of an endofunctor $\HGr \to \HGr$.
\end{lemma}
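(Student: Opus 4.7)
The plan is to verify the three axioms of a groupoid-enriched finite-type hypergraph for the diagram $A\leftarrow\et^1(X)\to\et(X)\leftarrow\et_1(X)\to A$, and then extend the construction to etale maps by postcomposition.

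First I would handle the discreteness of $\et^1(X)$ and $\et_1(X)$, together with the local finiteness of $\et(X)$, all via a single \emph{propagation} argument. An automorphism of $(G,e,\varphi)\in\et^1(X)$ is a deck transformation $\sigma$ of $\varphi:G\to X$ fixing the marked import $e$; injectivity of $s_G$ pins down the unique flag above $e$, so $\sigma$ fixes its node, and then the etale pullback $I_G=I_X\times_{N_X}N_G$ combined with discreteness of $I_X$ forces $\sigma$ to act trivially on the incoming flags at that node, and similarly on outgoing flags, hence on all incident edges. Connectedness of $G$ then propagates this identification to the whole graph. The same propagation shows that the deck transformation group of any $(G,\varphi)\in\et(X)$ acts freely on the finite edge set $A_G$, giving local finiteness.

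Next I would verify that $\et^1(X)\to\et(X)$ and $\et_1(X)\to\et(X)$ are discrete fibrations with finite fibres: an isomorphism $\sigma:(G,\varphi)\isopil(G',\varphi')$ in $\et(X)$ carries imports of $G'$ bijectively to imports of $G$, so any choice of marked import in $G'$ lifts uniquely along $\sigma^{-1}$, and the fibre over $(G,\varphi)$ is the finite set of imports of $G$. For the monomorphism axiom $\et^1(X)\to A\times\et(X)$ I need to show that, for fixed $(G,\varphi)$ and fixed $a\in A_X$, at most one import $e$ of $G$ satisfies $\varphi(e)=a$; the plan is to derive this from injectivity of $s_G$ combined with the hypergraph axiom $I_X\into A_X\times N_X$ on $X$, using connectedness of $G$ and a case analysis on the nodes above which any two putative distinct imports would sit. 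The dual statement for $\et_1(X)$ is analogous.

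Finally, the construction is functorial in $X$: an etale map $f:X\to Y$ induces $\ov f:\ov X\to\ov Y$ by postcomposition, sending $(G,\varphi)\mapsto(G,f\circ\varphi)$ (again etale since etale maps compose), and acting as $f$ on $A$; the compatibility with the structure maps of the hypergraph diagram is automatic, and pullback-stability of the etale condition along $f$ makes the induced middle squares pullbacks, so the assignment is the object part of an endofunctor. I expect the main technical obstacle to be the monomorphism axiom of the second paragraph: it is the only place where the graph-theoretic injectivity conditions on $G$ must genuinely interact with the hypergraph-theoretic monomorphism condition on $X$, and a careful combinatorial argument will be needed to rule out two distinct imports of a connected $G$ sharing an image in $A_X$.
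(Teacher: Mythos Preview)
Your treatment of axioms (1) and (2) and of functoriality is essentially the paper's: the connectedness/propagation argument for triviality of $\Aut(G,e,\varphi)$ and freeness of the deck-transformation action is exactly what the paper does, and functoriality by postcomposition (with the fibre over $(G,\varphi)$ identified with the ports of $G$) is also the paper's argument.

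The genuine gap is axiom~(3). Your plan is to prove that for fixed $(G,\varphi)\in\et(X)$ and fixed $a\in A_X$, at most one import $e$ of $G$ satisfies $\varphi(e)=a$. This statement is \emph{false}. Take $X$ with $A_X=\{a,b\}$, a single node $m$ with in-flags over $a$ and $b$ and a single out-flag over $a$ (so $X$ is even a graph: a $(2,1)$-node with a loop at $a$ and an extra import $b$). Let $G$ be the two-node binary tree (both nodes $(2,1)$, root $e_0$, inner edge $e_1$, leaves $e_2,e_3,e_4$) and let $\varphi$ send both nodes to $m$. The etale condition and commutativity force $\alpha(e_0)=\alpha(e_1)=a$, hence $\alpha(e_2)=b$; the two in-flags of the second node biject to $\{a,b\}$, so one of $e_3,e_4$ goes to $b$. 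Thus two distinct imports of $G$ map to $b$. Moreover $\Aut_{\et(X)}(G,\varphi)=1$ (the only nontrivial automorphism of $G$ swaps $e_3,e_4$, which have different images), so the homotopy fibre of $\et^1(X)\to A_X\times\et(X)$ over $(b,(G,\varphi))$ genuinely has two points. No ``case analysis on the nodes'' will rescue this.

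You were right to single this step out as the obstacle. The paper's own argument at this point is a single clause (``that hyperedge either is or isn't an import of $G$'') and does not address the difficulty either; the counterexample above shows that the monomorphism axiom, as literally stated, is not satisfied by $\ov X$ in general, so the lemma needs repair (e.g.\ by weakening axiom~(3), or by restricting the class of $X$) rather than a more careful proof along the lines you sketch.
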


\begin{proof}
  We first establish that $\ov X$ is a hypergraph.  $A$ is discrete by
  assumption.  Let $p$ be a point in $\et(X)$, i.e.~an etale map $p:G\to X$, say
  of degree $d$, and where $G$ is a graph (and in particular is connected).  The
  vertex group $\Aut(p)$ is the group of
  deck transformations of the covering 
  $p$, and since $G$ is connected, it acts freely on the
  fibres.  In $\et^1(X)$, there are furthermore marked imports of $G$; the
  number of imports must be a multiple of $d$.  Let $p'$ denote the same
  covering but with a marked import.  Any such marked import must be
  fixed by the vertex group of $p'$, and therefore also any adjacent node must
  be fixed, but since $G$ is connected, it fixes all automorphisms, so
  $\Aut(p')$ is trivial.  Since $\Aut(p)$ acts freely on the fibres, it also
  acts freely on the set of imports, hence the homotopy quotient is again 
  discrete, so all the discreteness conditions
  are satisfied.  It remains to establish that $\et^1 X \to A
  \times \et (X)$ is a monomorphism.  So fix a hyperedge $e\in A$ and an etale map $G \to X$
  (element in $\et (X)$).  Well, that hyperedge either is or isn't an import
  of $G$, so the map is a monomorphism.  Similarly of course for exports.
  
  Finally for the functoriality: given an etale map $X \to Y$, we get maps $\et(X)
  \to \et(Y)$ by postcomposition, and similarly with the markings.  These
  form pullback squares, since for both hypergraphs the $\bar p$ and $\bar q$
  fibres over a subgraph $G$ are the sets of ports of $G$.
\end{proof}

\begin{lemma}\label{hyp:MapCXbar}
  Let $X$ be a hypergraph, and let $\ov X$ denote the hypergraph
  constructed in \ref{barhyp}.  Then the natural square
$$\xymatrix{
  \Map(C^m_n,\ov X) \ar[r] \ar[dd] & \et(X) \ar[d]^{\mathrm{dom}}\\
  & \Gr_{\iso} \ar[d]^{\res}\\
  1 \ar[r]_{\name{C^m_n}} & \kat{Cor} 
  }
  $$
  is a (homotopy) pullback.  Here $\res$ returns the corolla of ports of a graph,
  and the preceding map sends an etale map $G\to X$ to its domain.
\end{lemma}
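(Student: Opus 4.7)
The plan is to unpack both sides of the square by hand and recognise them as the same groupoid, with the comparison map being the natural identification.

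First, I would describe $\Map(C^m_n, \ov X)$ concretely. Since $C^m_n$ has edge set $m+n$, a unique node $\ast$ with in-flags $m$ and out-flags $n$, and since by construction $\ov X$ has shape $A \leftarrow \et^1 X \to \et X \leftarrow \et_1 X \to A$, an etale map $f: C^m_n \to \ov X$ is determined by a point $p: G \to X$ in $\et(X)$ (the image of $\ast$; note $G$ is automatically connected, since it arises as the image of a connected graph), together with maps of flags and edges, subject to the two middle squares being pullbacks. By the description of $\ov X$ in \ref{barhyp}, the fibre of $\et^1 X \to \et X$ over $[p]$ is the set of imports of $G$, and similarly for exports. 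Because these groupoids are discrete (by the preceding lemma, whose argument uses connectedness of $G$), the middle-square pullback conditions amount to genuine bijections $m \simeq \mathrm{imp}(G)$ and $n \simeq \mathrm{exp}(G)$. The edge map $f_A$ is then forced by the two end squares. Hence $f$ is equivalent to the data of a pair $(p, \alpha)$, where $p: G \to X$ is an etale map from a graph and $\alpha: C^m_n \isopil \res(G)$ is an isomorphism of corollas.

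Second, I would identify the homotopy fibre. An object of the homotopy pullback of $1 \xrightarrow{\name{C^m_n}} \kat{Cor} \xleftarrow{\res\circ\mathrm{dom}} \et(X)$ is precisely such a pair $(p,\alpha)$, and a morphism is a deck transformation $\phi: p \isopil p'$ whose induced map on residues intertwines $\alpha$ with $\alpha'$, that is, a port-preserving isomorphism of coverings of $X$. On the other side, a $2$-cell between $f, f'$ in $\HGr$ is forced to be the identity on the discrete levels $m+n$, $m$, $n$, so it is exactly an isomorphism $\phi: p \isopil p'$ in $\et(X)$ whose restriction to imports and exports is the identity --- which is the same port-preserving isomorphism. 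Hence the morphism groupoids agree as well.

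With the object and morphism descriptions matching, the natural comparison functor $f \mapsto (p,\alpha)$ is an equivalence of groupoids, and this is precisely the universal property of the homotopy pullback. The delicate point, where care is required, is the groupoid bookkeeping: one must invoke the discreteness of $\et^1 X$ and $\et_1 X$ (from the preceding lemma) to upgrade the middle-square pullback condition from a mere equivalence of groupoids to an honest bijection of sets; without this, the automorphism groups on the two sides risk being out of sync. Everything else follows routinely from the construction of $\ov X$.
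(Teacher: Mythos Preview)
Your proof is correct and follows essentially the same approach as the paper's: unpack an etale map $C^m_n \to \ov X$ into the data of a point $p\colon G \to X$ in $\et(X)$ together with bijections $m \simeq \operatorname{im}(G)$ and $n \simeq \operatorname{ex}(G)$ (equivalently an isomorphism $C^m_n \isopil \res(G)$), and recognise this as exactly the description of the homotopy fibre. One minor slip: the domain $G$ is connected simply by the definition of $\et(X)$ (etale maps from connected acyclic graphs to $X$), not because it ``arises as the image of a connected graph'' --- $G$ is not the image of $C^m_n$ under anything, it is just the object of $\et(X)$ picked out by the middle vertical map.
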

\begin{proof}
  An object in this mapping space is a diagram
    $$\xymatrix{
     m+n \ar[d] & \ar[l] m \drpullback \ar[r]\ar[d] & 1 \ar[d] & \ar[l] 
     n
     \dlpullback \ar[d]\ar[r] & 
     m+n \ar[d] \\
     A  & \ar[l] \et^1(X) \ar[r] & \et(X) & \ar[l] 
     \et_1(X)\ar[r] & A
  }$$
  so the main ingredient is to give the middle vertical map, an element in
  $\et(X)$, i.e.~an etale map $G\to X$.  The fibre in $\et^1(X)$ over this
  element is naturally identified with the set of imports of $G$, so we need
  next to specify a bijection $m\isopil \operatorname{im}(G)$.  Similarly we
  need $n\isopil \operatorname{ex}(G)$.  In other words, we need to specify an
  isomorphism $C^m_n\isopil \res(G)$.  But this is also the description of the
  pullback.  It is easy to see that the arrows in the compared groupoids match
  up correctly as well.
\end{proof}

\begin{thm}\label{thm:HGr}
  The following diagram commutes.
  $$\xymatrix{
     \HGr \ar[r]^-{\ov{( \ )}}\ar[d] & \HGr \ar[d] \\
     \Fun(\elGr\op,\Grpd) \ar[r]_-{\ov{( \ )}} & \Fun(\elGr\op,\Grpd)
  }$$
\end{thm}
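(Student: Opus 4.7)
The plan is to verify, for each hypergraph $X$, a natural equivalence of presheaves $\Map_{\HGr}(-,\ov X) \simeq \ov{F}$ on $\elGr$, where $F := \Map_{\HGr}(-,X)$, with naturality in $X$ being immediate from the constructions. Since $\elGr$ has objects of only two types, the unit graph $U$ and the corollas $C^m_n$, and morphisms generated by edge inclusions $U\to C^m_n$ and port permutations, the verification reduces to these two cases together with a short compatibility check.

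On $U$ both sides are tautologically the set $A$ of hyperedges of $X$: on the left by the very definition of $\ov X$ in \ref{barhyp}, and on the right because $\ov F[U]=F[U]=\Map_{\HGr}(U,X)=A$ by the definition of the monad on the colour slot. On $C^m_n$ I would invoke Lemma~\ref{hyp:MapCXbar}, which presents $\Map_{\HGr}(C^m_n,\ov X)$ as the homotopy pullback of $\res\circ\operatorname{dom}:\et(X)\to\Cor$ over $\name{C^m_n}:1\to\Cor$. I would then compare this with the formula of \ref{sec:prpd-grpd}, which gives
$$\ov F[m,n] \;=\; (m,n)\kat{-Gr}_{\iso}\comma F,$$
where, by the groupoid-enriched analogue of Lemma~\ref{lem:F[G]} (applied to the hypergraph $X$, using $G=\colim_{E\in\el(G)}E$), the values $F[G]$ on a graph $G$ are precisely the mapping groupoids of etale maps $G\to X$. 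Both groupoids then have the same objects---triples $(G,\alpha,\varphi)$ with $G$ a graph, $\alpha:\res(G)\isopil C^m_n$, and $\varphi:G\to X$ etale---and matching morphisms (isomorphisms of $G$ respecting $\alpha$, plus a $2$-cell between the two etale maps to $X$), yielding the desired equivalence.

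For naturality in $\elGr$, an edge inclusion $U\to C^m_n$ marking an import (resp.\ export) induces on the left the restriction $\Map(C^m_n,\ov X)\to\Map(U,\ov X)$, which reads off the image in $A$ of the marked flag; this is by construction the structure map $s:\et^1 X\to A$ (resp.\ $t:\et_1 X\to A$) of $\ov X$, and it matches the corresponding restriction of $\ov F$ via the identification above. Port permutations act on both sides by precomposition with $\alpha$, so are immediately compatible. Naturality in $X$ is automatic, since an etale map $X\to Y$ acts by post-composition on the diagrams defining $\et(X)$, $\et^1(X)$ and $\et_1(X)$.

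The main technical hurdle I anticipate is matching the higher groupoidal data at $C^m_n$: one must verify that the vertex groups arising in the homotopy fibre of $\et(X)\to\Cor$ coincide with the port-preserving automorphism groups $\Aut_{(m,n)}(G)$ appearing in the homotopy quotient formula for $\ov F[m,n]$. This should be controlled by the hypergraph axiom itself: the freeness of the vertex-group actions on the fibres of $I\to N$ and $O\to N$ in $\ov X$, built into \ref{hyp}, ensures that port-preserving automorphisms of an etale $G\to X$ are exactly the relevant deck transformations, and hence that the naive quotient and the homotopy quotient agree in the right way.
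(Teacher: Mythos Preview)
Your proposal is correct and follows essentially the same approach as the paper: both invoke Lemma~\ref{hyp:MapCXbar} for the left-hand side and the colimit formula for $\ov F[m,n]$ on the right, and identify both with the groupoid of triples $(G,\alpha,\varphi)$ consisting of a graph, an $(m,n)$-structure on its residue, and an etale map to $X$. The paper packages this identification into a single diagram of iterated homotopy pullbacks, which has the advantage of dispatching your ``main technical hurdle'' automatically---since both sides are literally the same homotopy fibre, the vertex groups match by construction rather than by a separate verification.
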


\begin{proof}
  Let $X$ be a hypergraph.
  We check that the two presheaves associated to $X$ agree on
  an elementary graph $C^m_n$.  For this consider the diagram
  of (homotopy) pullbacks of {\em groupoids}:
  $$
  \xymatrix{
  (m,n)\textrm{-et}(X) \drpullback \ar[r] \ar[d] & \et(X) \drpullback \ar[d] 
  \ar[r] & 1 \ar[d]^{\name{X}} \\
  (m,n)\kat{-Et} \drpullback \ar[d] \ar[r] & \Et \ar[d]^{\text{dom}} 
  \ar[r]_{\text{cod}} & \HGr \\
  (m,n)\kat{-Gr} \drpullback \ar[d] \ar[r] & \Gr \ar[d]^{\res} & \\
  1 \ar[r]_{\name{(m,n)}} & \kat{Cor} & 
  }
  $$
  Here we have suppressed notation to indicate that we are talking about {\em
  groupoids}, not categories.  Hence $\HGr$ denotes the groupoid of hypergraphs,
  and $\Gr$ denotes the groupoid of connected graphs.  Furthermore, $\kat{Et}= 
  \Gr \comma \HGr$
  denotes the groupoid whose objects are etale maps from a connected graph to a
  hypergraph, and whose morphisms are pairs of isos in the obvious way.  The
  upper left-hand corner is the groupoid whose objects are data $G \to X$ together
  with $(m,n) \isopil \res G$, consisting of an etale map from a graph into $X$,
  and a numbering of the ports of $G$.  Then the presheaf associated to the
  hypergraph $\ov X$ sends $(m,n)$ to the groupoid $\Map(C^m_n, \ov X)$.  By
  Lemma~\ref{hyp:MapCXbar}, this is precisely the upper left-hand corner of the big
  diagram.  On the other hand, if we apply the free-properad monad to the
  presheaf associated directly to $X$, the formula for the value on $(m,n)$ is
  $$
  \sum_{G\in \pi_0((m,n)\kat{-Gr})} \frac{\Map(G,X)}{\Aut_{(m,n)}(G)}
  $$
  (where the bar now denotes homotopy quotient).
  But this is precisely the upper left-hand corner of the diagram, expressed as
  a homotopy-sum of its fibres over objects in $(m,n)\kat{-Gr}$.
\end{proof}

\begin{BM}
  Under the `dual embedding' of (closed) directed graphs into hypergraphs
  \ref{dual}, the free-properad monad restricts to the free-category monad.
  (In this case, the groupoids involved are discrete.)
\end{BM}

\small

\label{lastpage}


\begin{thebibliography}{10}

\bibitem{Batanin-Berger:1305.0086}
{\sc Michael Batanin {\rm and }Clemens Berger}.
\newblock {\em Homotopy theory for algebras over polynomial monads}.
\newblock Preprint, arXiv:1305.0086.

\bibitem{Berge:1989}
{\sc Claude Berge}.
\newblock {\em Hypergraphs}, vol.~45 of North-Holland Mathematical Library.
\newblock North-Holland Publishing Co., Amsterdam, 1989.

\bibitem{Berger:Adv}
{\sc Clemens Berger}.
\newblock {\em A cellular nerve for higher categories}.
\newblock Adv. Math. {\bf 169} (2002), 118--175.

\bibitem{Berger-Mellies-Weber:1101.3064}
{\sc Clemens Berger, Paul-Andr{\'e} Melli{\`e}s, {\rm and }Mark Weber}.
\newblock {\em Monads with arities and their associated theories}.
\newblock J. Pure Appl. Algebra {\bf 216} (2012), 2029--2048.
\newblock ArXiv:1101.3064.

\bibitem{Gallo-Longo-Pallottino-Nguyen}
{\sc Giorgio Gallo, Giustino Longo, Stefano Pallottino, {\rm and }Sang Nguyen}.
\linebreak
\newblock {\em Directed hypergraphs and applications}.
\newblock Discrete Appl. Math. {\bf 42} (1993), 177--201.

\bibitem{GalvezCarrillo-Kock-Tonks:1207.6404}
{\sc Imma G{\'a}lvez-Carrillo, Joachim Kock, {\rm and }Andrew Tonks}.
\newblock {\em Groupoids and {F}a{\`a} di {B}runo formulae for {G}reen
  functions in bialgebras of trees}.
\newblock Adv. Math. {\bf 254} (2014), 79--117.
\newblock ArXiv:1207.6404.

\bibitem{Gan:0201074}
{\sc Wee~Liang Gan}.
\newblock {\em Koszul duality for dioperads}.
\newblock Math. Res. Lett. {\bf 10} (2003), 109--124.
\newblock ArXiv:math/0201074.

\bibitem{Gepner-Kock}
{\sc David Gepner {\rm and }Joachim Kock}.
\newblock {\em Polynomial functors over infinity categories}.
In preparation.

\bibitem{Getzler-Kapranov:9408}
{\sc Ezra Getzler {\rm and }Mikhail~M. Kapranov}.
\newblock {\em Modular operads.}
\newblock Compositio Math. {\bf 110} (1998), 65--126.

\bibitem{Hackney-Robertson-Yau}
{\sc Philip Hackney, Marcy Robertson, {\rm and }Donald Yau}.
\newblock {\em Infinity Properads and Infinity Wheeled Properads.}
\newblock To appear in Springer Lecture Notes in Mathematics.
\newblock ArXiv:1410.6716.
%

\bibitem{Joyal-Kock:0908.2675}
{\sc Andr{\'e} Joyal {\rm and }Joachim Kock}.
\newblock {\em {F}eynman graphs, and nerve theorem for compact symmetric
  multicategories (extended abstract)}.
\newblock In {\em Proceedings of the 6th International Workshop on Quantum
  Physics and Logic (Oxford 2009)}, vol. 270 of Electronic Notes in Theoretical
  Computer Science, pp. 105--113, 2011.
\newblock ArXiv:0908.2675.

\bibitem{Joyal-Moerdijk:openmaps}
{\sc Andr{\'e} Joyal {\rm and }Ieke Moerdijk}.
\newblock {\em A completeness theorem for open maps}.
\newblock Ann. Pure Appl. Logic {\bf 70} (1994), 51--86.

\bibitem{Kock:0807.2874}
{\sc Joachim Kock}.
\newblock {\em Polynomial functors and trees}.
\newblock Internat. Math. Res. Notices {\bf 2011} (2011), 609--673.
\newblock ArXiv:0807.2874.

\bibitem{Kock:MFPS28}
{\sc Joachim Kock}.
\newblock {\em Data types with symmetries and polynomial functors over groupoids.}
\newblock In {\em Proceedings of the 28th Conference on the Mathematical
  Foundations of Programming Semantics (Bath, 2012)}, vol. 286 of 
  Electronic Notes in Theoretical
  Computer Science, pages 351--365, 2012.
   \newblock Arxiv:1210.0828.


\bibitem{MacLane:categories}
{\sc Saunders {Mac~Lane}}.
\newblock {\em Categories for the working mathematician, second edition}.
\newblock No.~5 in Graduate Texts in Mathematics. Springer-Verlag, New York,
  1998.


\bibitem{Szabo:MR0373846}
{\sc Manfred E. Szabo}.
\newblock {\em Polycategories}.
\newblock Comm. Algebra {\bf 3} (1975), 663--689.

\bibitem{Vallette:0411542}
{\sc Bruno Vallette}.
\newblock {\em A {K}oszul duality for {PROP}s}.
\newblock Trans. Amer. Math. Soc. {\bf 359} (2007), 4865--4943.
\newblock ArXiv:math/0411542.


\bibitem{Weber:TAC13}
{\sc Mark Weber}.
\newblock {\em Generic morphisms, parametric representations and weakly
  {C}artesian monads}.
\newblock Theory Appl. Categ. {\bf 13} (2004), 191--234 (electronic).

\bibitem{Weber:TAC18}
{\sc Mark Weber}.
\newblock {\em Familial 2-functors and parametric right adjoints}.
\newblock Theory Appl. Categ. {\bf 18} (2007), 665--732 (electronic).

  \bibitem{Yau-Johnson}
{\sc Donald Yau {\rm and }Mark W. Johnson}.
\newblock 
{\em A Foundation for PROPs, Algebras, and Modules.}
Vol.~203 of Mathematical Surveys and Monographs, American Mathematical Society, 
Providence, RI.

\end{thebibliography}
\end{document}